\definecolor{shadecolor}{gray}{0.875}
\definecolor{col}{RGB}{42, 95, 151}
\numberwithin{equation}{section}
\theoremstyle{plain}
\newtheorem{theorem}{Theorem}[section]
\newtheorem*{lemma*}{Lemma}
\newtheorem{lemma}[theorem]{Lemma}
\newtheorem*{theorem*}{Theorem}
\newtheorem{proposition}[theorem]{Proposition}
\newtheorem*{proposition*}{Proposition}
\newtheorem{corollary}[theorem]{Corollary}
\newtheorem*{corollary*}{Corollary}
\theoremstyle{definition}
\newtheorem{remark}[theorem]{Remark}
\newtheorem*{remark*}{Remark}
\newtheorem*{definition*}{Definition}
\newtheorem*{example*}{Example}
\newtheorem{example}[theorem]{Example}
\newtheorem*{question*}{Question}
\def\perv{{\textrm p}}
\def\RR{{\mathbb R}}
\def\ker{\operatorname{ker}}
\def\IC{\operatorname{IC}}
\def\im{\operatorname{im}}
\def\c1{\operatorname{c_1}}
\def\c2{\operatorname{c_2}}
\def\Perv{\operatorname{Perv}}
\def\BM{\mathrm{BM}}
\def\CC{{\mathbb C}}
\def\ZZ{{\mathbb Z}}
\def\NN{{\mathbb N}}
\def\SS{{\mathbb S}}
\def\QQ{{\mathbb Q}}
\def\PP{{\mathbb P}}
\def\A{{\mathcal A}}
\def\O{{\mathcal O}}
\def\E{{\mathscr E}}
\def\+{\oplus}                   
\def\*{\otimes}
\def\ci{{\mathcal{C}^{\infty}}}
\def\Id{\operatorname{Id}}
\def\CH{\operatorname{CH}}
\def\Sq{\operatorname{Sq}}
\def\Wu{\operatorname{Wu}}
\newcommand\blfootnote[1]{%
  \begingroup
  \renewcommand\thefootnote{}\footnote{#1}%
  \addtocounter{footnote}{-1}%
  \endgroup
}
\newcommand{\isoto}{\myxrightarrow{\,\sim\,}}
\def\myrightarrow{{\setbox\z@\hbox{$\rightarrow$}\dimen0\ht\z@\multiply\dimen0 6\divide\dimen0 10\ht\z@\dimen0\box\z@}}
\def\myrightarrowfill@{\arrowfill@\relbar\relbar\myrightarrow}
\newcommand{\myxrightarrow}[2][]{\ext@arrow 0359\myrightarrowfill@{#1}{#2}}
\def\myleftarrow{{\setbox\z@\hbox{$\leftarrow$}\dimen0\ht\z@\multiply\dimen0 6\divide\dimen0 10\ht\z@\dimen0\box\z@}}
\def\myleftarrowfill@{\arrowfill@\myleftarrow\relbar\relbar}
\newcommand{\myxleftarrow}[2][]{\ext@arrow 3095\myleftarrowfill@{#1}{#2}}
\renewcommand\setminus{-}
\DeclarePairedDelimiter{\floor}{\lfloor}{\rfloor}
\def\X{\mathcal X}
\title{Two coniveau filtrations}
\author{Olivier Benoist}
\address{D\'epartement de math\'ematiques et applications et CNRS, \'Ecole normale sup\'erieure,
45 rue d'Ulm, 75230 Paris Cedex 05, France}
\email{olivier.benoist@ens.fr}
\author{John Christian Ottem}
\address{University of Oslo, Box 1053, Blindern, 0316 Oslo, Norway}
\email{johnco@math.uio.no}
\begin{document}
\maketitle
\date{}
\vspace{-1cm}
\begin{abstract}
A cohomology class of a smooth complex variety of dimension $n$ has coniveau $\geq c$ if it vanishes in the complement of a closed subvariety of codimension $\geq c$, and has strong coniveau $\geq c$ if it comes by proper pushforward from the cohomology of a smooth variety of dimension $\leq n-c$. We show that these two notions differ in general, both for integral classes on smooth projective varieties and for rational classes on smooth open varieties.
\end{abstract}

\thispagestyle{empty}
\def\tors{{\rm tors}}

\blfootnote{{\it Date}: \today}

\section{Introduction}

Let $X$ be a smooth complex variety of dimension $n$. We say that a cohomology class $\alpha\in H^l(X,A)$ with coefficients in an abelian group $A$ has {\em coniveau} $\geq c$ if it vanishes outside a closed subset $Z\subset X$ of codimension at least $c$.
We also say that the class $\alpha$ has {\em strong coniveau} $\geq c$ if it is the Gysin pushforward of a class $\beta\in H^*(Y,A)$ on a smooth variety $Y$ of dimension at most $n-c$ via some proper morphism $f:Y\to X$. These two notions give two filtrations on the cohomology group $H^l(X,A)$, denoted $N^c H^l(X,A)$  and $\widetilde N^c H^l(X,A)$ respectively. Clearly $\widetilde N^c H^l(X,A)\subseteq N^c H^l(X,A)$.

In \cite[\S 9.7]{grothA}, Grothendieck asserted that these two filtrations coincide, i.e., $\widetilde N^cH^l(X,A)=N^cH^l(X,A)$ (in \emph{loc.~cit.}, $X$ is assumed proper and $A$ finite, but these hypotheses are not used in the argument sketched there).
This statement is indeed true if $X$ is proper and $A=\QQ$, as a consequence of Deligne's mixed Hodge theory \cite[Corollaire 8.2.8, Remarque 8.2.9]{HodgeIII}. 
However, a few years later, Grothendieck retracted this statement in a footnote of \cite[p.~300]{grothB} (see also the comments of Illusie in \cite[p.~118]{illusie}).

The goal of this paper is to exhibit the first examples where the two filtrations are indeed different. We give both examples with integral coefficients on smooth projective varieties, and with rational coefficients on smooth open varieties (as well as examples of an appropriate variant of this problem with rational coefficients on singular projective varieties). Here is our first main result:

\begin{theorem}[Theorem \ref{main}]
\label{mainthm}
For all $c\ge 1$ and $l\ge 2c+1$, there is a smooth projective complex variety $X$ such that the inclusion 
 $\widetilde N^cH^l(X,\ZZ)\subset N^c H^l(X,\ZZ)$ is strict.
 One may choose $X$ to have torsion canonical bundle. If $c\geq 2$, one may choose $X$ to be rational.
\end{theorem}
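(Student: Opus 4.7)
Since Deligne's theorem implies $\widetilde{N}^cH^l(X,\QQ) = N^cH^l(X,\QQ)$ for any smooth projective $X$, the sought strict inclusion can only be witnessed by a torsion class. My plan is to first solve the base case $(c,l) = (1,3)$, producing a smooth projective variety $X_0$ with a $2$-torsion class $\alpha_0 \in H^3(X_0,\ZZ)$ that lies in $N^1$ but not in $\widetilde{N}^1$, and then bootstrap to general $(c,l)$ via external products with auxiliary factors. The central difficulty is engineering a cohomological invariant that separates $\alpha_0$ from all possible strong-coniveau presentations, since Deligne's argument rules out any rational obstruction.

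A prototypical source of torsion in $H^3$ is an Enriques surface $S$: its generator $\omega \in H^3(S,\ZZ) \cong \ZZ/2$ has coniveau $\geq 1$ automatically (by Artin vanishing, $H^{>2}$ vanishes on smooth affine opens of $S$), but unfortunately $\omega \in \widetilde{N}^1$ as well, since any sufficiently ample smooth curve $C \subset S$ satisfies $\pi_1(C) \twoheadrightarrow \pi_1(S) \cong \ZZ/2$ by Lefschetz, making the Gysin map $H^1(C,\ZZ) \to H^3(S,\ZZ)$ surjective. To promote $\omega$ into a class obstructed in $\widetilde{N}^1$, I would embed $S$ into a suitable higher-dimensional construction --- for instance a family of Enriques surfaces $X_0 \to B$, a Brauer--Severi variety twisted by an Enriques-like $\ZZ/2$-class, or a carefully chosen blow-up --- so that no smooth codimension-one subvariety of $X_0$ detects the torsion carrier. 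The non-membership $\alpha_0 \notin \widetilde{N}^1$ would be certified by a torsion-sensitive cohomology operation, such as a Steenrod square or a higher Bockstein, which by a Wu-type compatibility vanishes identically on Gysin images from smooth codimension-one $Y$, but which a direct calculation shows to be nonzero on $\alpha_0$.

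Given the base case $(X_0,\alpha_0)$, for general $c \geq 1$ and $l \geq 2c+1$ I would set $X = X_0 \times Z$ and $\alpha = \alpha_0 \times \delta$, where $Z$ is a smooth projective variety carrying a class $\delta \in N^{c-1}H^{l-3}(Z,\ZZ)$; a product of projective spaces together with a low-dimensional Calabi--Yau factor can be arranged to supply such a $\delta$ for every admissible $l$. Then $\alpha \in N^cH^l(X,\ZZ)$ by the product rule for coniveau, and if $\alpha$ were of strong coniveau $c$, then projecting along $X \to X_0$ and invoking K\"unneth with the projection formula would yield a strong-coniveau-$1$ presentation of $\alpha_0$, contradicting the base case. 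The torsion-canonical refinement is arranged by choosing all auxiliary factors to be Calabi--Yau; for the rationality refinement at $c\geq 2$ I would replace $X_0$ by a rational variety encoding the same torsion obstruction (e.g.\ a conic bundle whose discriminant reproduces an Enriques-style $\ZZ/2$), which is feasible because $c\geq 2$ leaves enough room to install the obstruction on a birationally trivial total space. The principal obstacle throughout is the base case: producing the cohomological operation that both vanishes on every strong-coniveau presentation and is nonzero on $\alpha_0$ --- this is precisely where Grothendieck's original argument breaks down and where the entire integral-versus-rational subtlety of the problem is concentrated.
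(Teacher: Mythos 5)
Your skeleton (a torsion class certified by a Steenrod-type operation compatible with Gysin maps, then products/pushforwards to raise the coniveau) is indeed the paper's strategy, but there is a genuine gap exactly where you place ``the principal obstacle'': you never construct the base case, and the paper's proof consists essentially of that construction. The paper produces the obstructed classes by the Atiyah--Hirzebruch/Godeaux--Serre method: algebraic approximations of $B(\ZZ/2)^s$ yield a smooth projective $V$ with torsion canonical bundle and a class $\xi\in H^s(V,\ZZ/2)$ with $S_j\Sq^1(\xi)\neq 0$, where $S_j=\Sq^{2^j-1}\cdots\Sq^3$ (Lemmas \ref{Sjnonzero} and \ref{GodeauxSerre}); the class $\alpha$ is then (a pushforward of) $\beta_{\ZZ}(\xi)$, which has coniveau $\geq 1$ because it is torsion (Proposition \ref{torsionc}, via Bloch--Kato), while $S_j(\overline{\alpha})\neq 0$ excludes strong coniveau $\geq c$ by the relative Wu theorem together with the vanishing of odd Stiefel--Whitney classes of the virtual normal bundle of an algebraic map and an Adem-relation computation (Lemma \ref{star1}, Propositions \ref{WuAH}--\ref{obst2}). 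None of your candidate sources for $X_0$ (Enriques families, twisted Brauer--Severi varieties, unspecified blow-ups) is shown to carry a torsion class with $\Sq^3\neq 0$, and your ``Wu-type compatibility'' is asserted rather than proved, so the heart of the theorem is missing.

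Two further steps also fail as written. First, the bootstrap $X=X_0\times Z$, $\alpha=\alpha_0\times\delta$: pushing forward along $X\to X_0$ kills $\alpha$ unless $\delta$ is a multiple of the point class of $Z$, so your reduction of a strong-coniveau-$c$ presentation of $\alpha$ to a strong-coniveau-$1$ presentation of $\alpha_0$ only works when $\deg\delta=2\dim Z$, i.e.\ for $l$ odd; for the remaining degrees no formal K\"unneth/projection-formula reduction to the single degree-$3$ class $\alpha_0$ is available, which is why the paper builds obstructed torsion classes of every degree $l-2c+2\geq 3$ directly (taking $j=s=l-2c+1$) and only multiplies by the point class of a $(c-1)$-dimensional factor. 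Second, the rationality refinement: you cannot ``replace $X_0$ by a rational variety encoding the same torsion obstruction'', since your argument needs $\alpha_0$ to have coniveau $\geq 1$ but strong coniveau $0$ on $X_0$, which is impossible on a rational variety by Corollary \ref{rational} (and, for a degree-$3$ torsion class, already by the Artin--Mumford birational invariance of torsion in $H^3$). The paper's mechanism is different: keep the obstructed class on a non-rational smooth subvariety $\widetilde W$ of a rational ambient variety, blow up along $\widetilde W$, and push the class forward from the exceptional divisor, using Lemma \ref{SjGysin} to see that the $S_j$-obstruction survives; this blow-up device is absent from your sketch.
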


Theorem \ref{mainthm} is optimal as $\widetilde N^cH^l(X,\ZZ)= N^cH^l(X,\ZZ)$ for all $l\leq 2c$ (see Proposition~\ref{egaliteconiveau}); in fact, $N^cH^l(X,\ZZ)=0$ for $l<2c$.
Moreover, $\widetilde N^1H^l(X,\ZZ)=N^1H^l(X,\ZZ)$ if $X$ is rational (see Corollary \ref{rational}). In most of our examples, $H^l(X,\ZZ)$ has torsion, but we also construct one for which $H^l(X,\ZZ)$ is torsion-free (see Proposition~\ref{G2}). 
Our examples are mainly of large dimension, but we also construct some low-dimensional examples.

\begin{theorem}[Theorem \ref{main2}]
\label{mainthm2}
For $l\in\{3,4\}$, there is a smooth projective complex variety $X$ of dimension $l+1$
with torsion canonical bundle
such that the inclusion $\widetilde N^1H^l(X,\ZZ)\subset N^1H^l(X,\ZZ)$ is strict. 
\end{theorem}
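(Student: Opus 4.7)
The plan is to combine the mechanism of Theorem~\ref{mainthm} with a dimension-reduction argument that respects the torsion-canonical condition. First I would produce a variety $X_0$ of dimension $\geq l+2$, from the construction underlying Theorem~\ref{mainthm}, with torsion canonical bundle and a class $\alpha_0\in N^1H^l(X_0,\ZZ)\setminus\widetilde N^1H^l(X_0,\ZZ)$ whose failure to lie in $\widetilde N^1$ is witnessed by a \emph{restriction-stable} obstruction. The most natural such obstruction is a nonvanishing Steenrod operation on the mod-$p$ reduction $\bar\alpha_0$, or equivalently a nontrivial Atiyah--Hirzebruch differential; both commute with pullback along closed immersions, which is exactly what one needs for the descent.

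Next I would cut $X_0$ down to a smooth closed subvariety $X\subset X_0$ of dimension exactly $l+1$ by intersecting with $\dim X_0-(l+1)$ generic hyperplane sections. In this range, the Lefschetz hyperplane theorem applied to complete intersections gives an isomorphism $H^l(X_0,\ZZ)\isoto H^l(X,\ZZ)$, so the restriction $\alpha:=\alpha_0|_X$ is nonzero. Coniveau is preserved because a generic complete-intersection section meets the codimension-$1$ support of $\alpha_0$ in a divisor of $X$. The restriction-stability of the obstruction then forces $\alpha\notin\widetilde N^1H^l(X,\ZZ)$.

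The main obstacle is geometric: adjunction, $K_X=(K_{X_0}+\text{section})|_X$, shows that ample sections destroy the torsion-canonical condition. I would therefore work inside a Calabi--Yau ambient and take anticanonical sections, or replace the cutting-down procedure by a direct construction tailored to the small cases $l=3,4$. A natural candidate is an \emph{Enriques-type quotient} $X=\widetilde X/\iota$ of a Calabi--Yau $(l+1)$-fold $\widetilde X$ by a fixed-point-free involution, so that $K_X$ is $2$-torsion; via the Cartan--Leray spectral sequence one obtains a torsion class in $H^l(X,\ZZ)$, which automatically has coniveau $\geq 1$ (any complement of an ample divisor is affine of dimension $l+1$ so kills $H^l$ of torsion origin supported there), and the failure of strong coniveau then reduces to a concrete Steenrod-square computation on $\widetilde X$ ruling out any smooth pushforward origin from a divisor. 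The hardest step is verifying this last step on $\widetilde X$, where one must exhibit a Calabi--Yau cover carrying a divisor whose resolution provably cannot support a preimage of $\alpha$.
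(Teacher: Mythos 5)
Your first route (cut down a high-dimensional $X_0$ from Theorem~\ref{mainthm} by generic hyperplane sections) has a fatal problem that the adjunction issue, which you flag, does not even get to: the Steenrod obstruction lives in the wrong degree for Lefschetz to help. The obstruction that certifies $\alpha_0\notin\widetilde N^1$ is the nonvanishing of $\Sq^3(\bar\alpha_0)\in H^{l+3}(X_0,\ZZ/2)$, and the question is whether this survives restriction to a smooth complete-intersection $X\subset X_0$ of dimension $l+1$. The Lefschetz hyperplane theorem controls restriction only in degrees $\leq\dim X=l+1$, and $l+3>l+1$. So after cutting from dimension $14$ (for $l=3$) or $45$ (for $l=4$) down to $l+1$, nothing prevents $\Sq^3(\bar\alpha_0)|_X$ from vanishing in $H^{l+3}(X,\ZZ/2)$, and indeed one should expect it generically does. ``Restriction-stability of the obstruction'' is precisely what fails here; the Atiyah--Hirzebruch/Steenrod argument is only stable under restriction to subvarieties whose dimension is at least (degree of the obstruction class), i.e.\ roughly $l+3$, not $l+1$.

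Your coniveau argument for the ``Enriques-type'' alternative is also incorrect as stated: being affine of dimension $l+1$ gives vanishing of cohomology in degrees $>l+1$, which says nothing about a torsion class in degree $l\leq l+1$. What the paper actually uses is Proposition~\ref{torsionc}, i.e.\ the nontrivial fact (via the Bloch--Kato conjecture, or the remark following Theorem~\ref{main}) that torsion classes automatically have coniveau $\geq 1$; this is not a soft Artin-vanishing argument.

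Finally, the ``direct construction for small $l$'' you gesture at (a Calabi--Yau $(l+1)$-fold modulo a free involution) is left entirely open, and the hard step---producing a concrete $\sigma\in H^l(X,\ZZ)$ with $\Sq^3(\bar\sigma)\neq 0$ on a torsion-canonical variety of dimension $l+1$---is exactly the content of the paper's Lemma~\ref{calculbielliptique} and Proposition~\ref{fourfold}. The paper does not take a single quotient of a Calabi--Yau, but rather starts from a bielliptic surface $S$ carrying classes $\alpha,\beta\in H^1(S,\ZZ/2)$ with $\alpha^3\beta\neq 0$ and $\beta^2=0$, then iterates a ``diagonal quotient'' fibration $N=(M'\times\SS^1)/(\ZZ/2)$ four times (packaged as two quotients by elliptic curve factors) to obtain a fourfold $Z$ with a $2$-torsion class $\sigma\in H^3(Z,\ZZ)$ satisfying $\bar\sigma^2\neq 0$. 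The iterated construction and the cup-product computations in $H^*(S,\ZZ/2)[\delta,\delta',\gamma,\gamma']$ are the crux of the argument; without them, the proposal is an outline of an approach rather than a proof.
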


The obstructions to the equality $\widetilde N^cH^l(X,\ZZ)=N^cH^l(X,\ZZ)$ that we use to prove Theorems \ref{mainthm} and \ref{mainthm2} are of topological nature, based on Steenrod operations or complex cobordism, and are inspired by the famous examples of Atiyah--Hirzebruch and Totaro of non-algebraic cohomology classes \cite{AH2,totaro}. 
 In fact, we show that in the setting above, some classes in $N^cH^l(X,\ZZ)$ are not even pushforwards from a compact complex manifold of dimension $\leq \dim(X)-c$ via a proper $\mathcal {C}^\infty$\nobreakdash-map.
 The dimensions of the varieties appearing in Theorem~\ref{mainthm2} are the lowest possible that one can obtain with such topological arguments (see Theorem~\ref{main3} and Remark~\ref{rkconiveaus}).

Our second main theorem is:

\begin{theorem}[Theorem \ref{main4}]
\label{mainthm4}
For all $c\geq 1$ and $l\geq 2c+1$, there is a smooth quasi-projective rational complex variety $X$ of dimension $l-c+1$
such that the inclusion $\widetilde N^cH^{l}(X,\QQ)\subset N^cH^{l}(X,\QQ)$ is strict.
\end{theorem}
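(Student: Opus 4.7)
The plan is to use Deligne's mixed Hodge structure on $H^l(X,\QQ)$ to distinguish the two filtrations via a weight-based obstruction.

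First, I would show that for any smooth $X$, one has $\widetilde N^c H^l(X,\QQ) \subseteq W_{2l-2c} H^l(X,\QQ)$. Indeed, if $\alpha = f_*\beta$ with $f \colon Y \to X$ proper, $Y$ smooth of dimension $n-e$ for some $e \geq c$, and $\beta \in H^{l-2e}(Y,\QQ)$, then $f_*$ is a morphism of mixed Hodge structures of type $(e,e)$, and $H^{l-2e}(Y,\QQ)$ has weights at most $2(l-2e)$ since $Y$ is smooth, so $\alpha$ has weight at most $2l - 2e \leq 2l - 2c$.

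It then suffices to construct a smooth quasi-projective rational variety $X$ of dimension $l-c+1$ carrying a class $\alpha \in N^c H^l(X,\QQ)$ of weight strictly greater than $2l-2c$. My candidate construction: take a smooth projective variety $A$ of dimension $c$ with $H^{2c-1}(A,\QQ) \neq 0$ (for instance, an abelian $c$-fold), embed $A$ in a smooth rational projective variety $\overline X$ of dimension $l-c+1$ obtained as an iterated blow-up of $\PP^{l-c+1}$, and choose a simple normal-crossings divisor $D \subset \overline X$ so that $A$ is an irreducible component of the codim-$(l-2c+1)$ intersection stratum of $D$. Set $X := \overline X - D$. Deligne's weight spectral sequence then produces in $H^l(X,\QQ)$ a nonzero piece of weight $2l-2c+1$ coming from $H^{2c-1}(A,\QQ)(-(l-2c+1))$, assuming the relevant $E_2$-differentials can be arranged to vanish.

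The main obstacle is verifying that this $\alpha$ actually lies in $N^c H^l(X,\QQ)$. The natural candidate for the codim-$c$ support is $Z := \pi^{-1}(B) \cap X$, where $\pi \colon \overline X \to \PP^{l-c+1}$ is the blow-down map and $B \subset \PP^{l-c+1}$ is a codim-$c$ subvariety containing the image of $A$; one then needs to show that $\alpha$ vanishes on $X - Z$, via a careful comparison of the Deligne spectral sequences for $X$ and $X - Z$. This step, balanced against the constraint that $\overline X$ be rational despite containing the non-rational variety $A$ as a deep boundary stratum, is where I expect the bulk of the technical work to lie: the iterated blow-up recipe for $\overline X$ and the choice of $D$ must be arranged together to ensure simultaneously the rationality of $\overline X$, the required high-weight piece in $H^l(X,\QQ)$, and the explicit codim-$c$ support witnessing the coniveau of $\alpha$.
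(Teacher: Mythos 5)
There is a genuine gap, and it is fatal rather than technical: the weight obstruction you propose cannot distinguish the two filtrations, because the ordinary coniveau filtration satisfies exactly the same weight bound as the strong one. Indeed, suppose $\alpha\in N^cH^l(X,\QQ)$ is supported on a closed subset $Z\subset X$ of codimension $\geq c$, with $X$ smooth of dimension $n$. Then $\alpha$ lifts along the morphism of mixed Hodge structures $H^l_Z(X,\QQ)\to H^l(X,\QQ)$, and by duality on the smooth ambient variety $H^l_Z(X,\QQ)\simeq H^{\BM}_{2n-l}(Z,\QQ)(-n)\simeq \bigl(H^{2n-l}_c(Z,\QQ)\bigr)^\vee(-n)$. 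For any variety $Z$ of dimension $d$ one has the standard bound that the weights of $H^m_c(Z,\QQ)$ are $\geq \max(0,2(m-d))$ (prove it by induction on $d$ using a resolution and the descent exact sequence, the smooth case being Poincar\'e duality plus the fact that $H^m$ of a smooth variety has weights $\geq m$). With $d\leq n-c$ and $m=2n-l$ this gives that $H^l_Z(X,\QQ)$ has weights $\leq 2l-2c$, and by strictness of morphisms of mixed Hodge structures the image of $H^l_Z(X,\QQ)$ in $H^l(X,\QQ)$ lies in $W_{2l-2c}H^l(X,\QQ)$. So $N^cH^l(X,\QQ)\subseteq W_{2l-2c}H^l(X,\QQ)$, just like $\widetilde N^c$ (your first step is correct but yields no obstruction). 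In particular the class you hope to build, which is nonzero in $\operatorname{gr}^W_{2l-2c+1}H^l(X,\QQ)$ (coming from $H^{2c-1}(A,\QQ)(-(l-2c+1))$), can never have coniveau $\geq c$: the step you flag as the ``main obstacle'' --- verifying $\alpha\in N^cH^l(X,\QQ)$ --- is not hard, it is impossible.

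For comparison, the paper's proof uses an obstruction that is not visible in the weight filtration. One constructs a rational smooth projective $S$ of dimension $l-2c+2$ together with a semismall contraction $g:S\to\overline S$ contracting a smooth subvariety $D$ to points, and a nonzero $\rho\in H^*(D,\QQ)$ with $\iota_*\rho=0$ in $H^*(S,\QQ)$; a lift $\gamma$ of $\rho$ to $H^{l-2c+2}(S\setminus D,\QQ)$ has coniveau $\geq 1$ (Bloch--Ogus/Gersten plus rationality of $S$), while strong coniveau $\geq 1$ is excluded by the decomposition theorem together with de Cataldo--Migliorini's theorem that the kernel of the refined local intersection form is controlled by the perverse filtration; multiplying by $\PP^{c-1}$ then gives the statement for general $c$. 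Any successful argument must go beyond mixed Hodge-theoretic weights in this way.
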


Theorem \ref{mainthm4} is optimal as $\widetilde N^cH^l(X,\QQ)= N^cH^l(X,\QQ)$ for $l\leq 2c$ (see Proposition~\ref{egaliteconiveau}). Moreover, the dimensions of the varieties we consider are the smallest possible as $\widetilde N^cH^l(X,\QQ)= N^cH^l(X,\QQ)$ if $\dim(X)\leq l-c$ (see Proposition~\ref{egaliteconiveau}).
The proof of Theorem \ref{mainthm4} is based on the theory of perverse sheaves, and relies in an essential way on the decomposition theorem of Bernstein, Beilinson, Deligne and Gabber \cite{BBDG} and on a refinement of the Hodge index theorem  due to de Cataldo and Migliorini \cite{dCM} (see \S\ref{dCM}).
The proof of Theorem \ref{mainthm4} also yields examples demonstrating that the natural coniveau and strong coniveau filtrations on the rational homology of a singular projective  variety may differ (see Theorem~\ref{singular}).

\vspace{1em}

The paper is organized as follows. Section \ref{prelim} gathers generalities on the coniveau and strong coniveau filtrations.
In Section \ref{topo}, we develop topological obstructions for integral cohomology classes to have high strong coniveau. In Sections \ref{BG} and \ref{low} we then give explicit examples showing that these obstructions actually occur, in particular proving Theorems \ref{mainthm} and \ref{mainthm2}. Section \ref{ratcoeffs} deals with cohomology classes with rational coefficients on open or singular varieties and contains the proof of Theorem \ref{mainthm4}. Finally, we collect several questions that we leave open in Section \ref{questions}.

\medskip

\noindent {\bf Conventions}. A variety is a separated scheme of finite type over a field, which will always be the field of complex numbers.   All manifolds are Hausdorff and second countable. All topological spaces have the homotopy type of CW complexes. We use the Grothendieck notation for projective bundles, so that $\PP(\E)$ parameterizes quotient line bundles of a vector bundle $\E$.

\medskip

\noindent {\bf Acknowledgements}. This paper started with a question of Claire Voisin during a visit of the second author to Paris in June 2019. We would like to thank her, as well as Burt Totaro, H\'el\`ene Esnault, Sergey Gorchinsky, Geoffroy Horel, Jørgen Vold Rennemo, Olivier Wittenberg and Nobuaki Yagita for useful discussions. We also thank the referees for their careful work. 
JCO was supported by the Research Council of Norway project no. 250104.

\section{Coniveau and strong coniveau}\label{prelim}

\subsection{Two filtrations}
\label{2filtrations}

Let $X$ be a smooth complex algebraic variety of dimension~$n$. Let us introduce the following two filtrations on the
cohomology of $X$ with coefficients in an abelian group $A$. The first is the classical {\em coniveau filtration}, defined by
\begin{eqnarray*} 
N^c H^l(X,A) & = & \sum_{Z\subset X} \ker \left(j^*:H^l(X,A)\to H^l(X-Z,A)\right)\\
&=&\sum_{Z\subset X} \im \left(H^l_Z(X,A)\to H^l(X,A)\right)
\end{eqnarray*}
where $Z\subset X$ runs through the closed subvarieties of codimension at least $c$ of~$X$ and $j:X-Z\to X$ is the complementary open immersion. 

Similarly, we define the {\em strong coniveau filtration}
$$\widetilde N^c H^l(X,A)=\sum_{f:Y\to X} \im \left(f_*:H^{l-2r}(Y,A)\to H^l(X,A)\right)$$
where the sum is over all proper morphisms $f:Y\to X$ from a smooth complex variety $Y$ of dimension $n-r$ with $r\geq c$. If $X$ is proper, one may equivalently define 
\begin{equation}
\label{alternastrong}
\widetilde N^c H^l(X,A)=\sum_{\Gamma\in\CH^k(Y\times X)} \im \left(\Gamma_*:H^{l-2r}(Y,A)\to H^l(X,A)\right)
\end{equation}
where $Y$ runs over all smooth proper complex varieties of dimension $k-r$ with $r\geq c$, as may be seen by desingularizing the irreducible components of a cycle representing $\Gamma$.

We thus get for each $l$ two descending filtrations $N^c$ and  $\widetilde N^c$ on $H^{l}(X,A)$. We say that a class in $N^c H^l(X,A)$ has {\em coniveau} $\geq c$ and that a class in $\widetilde N^c H^l(X,A)$ has {\em strong coniveau}~$\geq c$. Taking $Z=f(Y)$ shows that $\widetilde N^c H^l(X,A)\subseteq N^c H^l(X,A)$.

Note that we may equivalently define $\widetilde N^c H^l(X,A)$ to be generated by the Gysin pushforwards $i_* \beta$ where $i:\widetilde{Z}\to X$ is a composition of a desingularization $\widetilde Z\to Z$ of a subvariety $Z\subset X$ of codimension $\geq c$ with the inclusion (to see it, introduce a desingularization $\widetilde{Y}\to Y$ admitting a compatible morphism $\widetilde{Y}\to \widetilde{Z}$). 
From this point of view, that the inclusion $\widetilde N^c H^l(X,A)\subseteq N^c H^l(X,A)$ may not be an equality stems from the fact that $\widetilde Z$ and $Z$ can have quite different topology.

We also note that we may restrict, in the above definition of $\widetilde N^c H^l(X,A)$, to morphisms $f:Y\to X$ where $Y$ has dimension $n-c$. Indeed, if $\dim(Y)=n-r$, one may replace $Y$ with $Y\times \PP^{r-c}$ and $f:Y\to X$ with $f\circ pr_1: Y\times \PP^{r-c}\to X$.

One may still define coniveau and strong coniveau filtrations on the Borel--Moore homology of possibly singular varieties. We prefer to stick to the cohomology of smooth varieties for simplicity, except in \S\ref{singularpar}, which is devoted to singular varieties.

\subsection{When coniveau and strong coniveau coincide}

We first recall Deligne's result \cite[Corollaire 8.2.8]{HodgeIII}, whose proof is based on a weight argument.

\begin{theorem}[Deligne]
\label{Deligne}
Let $X$ be a smooth proper complex variety. Then, for all $l,c\geq 0$, one has $\widetilde N^c H^l(X,\QQ)= N^c H^l(X,\QQ)$.
\end{theorem}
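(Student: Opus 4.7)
The strategy is to combine resolution of singularities with Deligne's theory of mixed Hodge structures (MHS). Given $\alpha \in N^c H^l(X,\QQ)$, fix a closed subvariety $i\colon Z\hookrightarrow X$ of codimension $\geq c$ such that $\alpha$ lies in the image of the Gysin map $H^l_Z(X,\QQ)\to H^l(X,\QQ)$. By Hironaka, choose a proper birational morphism $\pi\colon \widetilde Z\to Z$ with $\widetilde Z$ smooth and projective; after multiplying by a projective space (as in the last paragraph of \S\ref{2filtrations}), we may arrange $\dim \widetilde Z = n-c$. Set $\tilde\imath = i\circ \pi$. The goal is to show that $\alpha$ lies in the image of the Gysin pushforward $\tilde\imath_*\colon H^{l-2c}(\widetilde Z,\QQ)\to H^l(X,\QQ)$, which would exhibit $\alpha$ as having strong coniveau $\geq c$.

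The key input from mixed Hodge theory is as follows. The purity isomorphism $H^l_Z(X,\QQ)\simeq H^{\BM}_{2n-l}(Z,\QQ)$ (valid since $X$ is smooth) endows $H^l_Z(X,\QQ)$ with a canonical MHS, and the Gysin map to $H^l(X,\QQ)$ is a morphism of MHS. Since $X$ is smooth and projective, $H^l(X,\QQ)$ is pure of weight $l$. Applying Deligne's weight bounds to the long exact sequence
\[
H^{l-1}(X-Z,\QQ)\to H^l_Z(X,\QQ)\to H^l(X,\QQ)\to H^l(X-Z,\QQ),
\]
and using that the weights on $H^{l-1}(X-Z,\QQ)$ of the smooth (possibly non-proper) variety $X-Z$ lie in $[l-1,2(l-1)]$, one concludes that the kernel of $H^l_Z\to H^l$ coincides with the weight-${>}\,l$ subspace of $H^l_Z(X,\QQ)$. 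Consequently, the image of $H^l_Z(X,\QQ)\to H^l(X,\QQ)$ is canonically identified with the pure weight-$l$ quotient $\Gr^W_l H^l_Z(X,\QQ)$.

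It therefore suffices to show that $\tilde\imath_*$ surjects onto $\Gr^W_l H^l_Z(X,\QQ)$. Since $\widetilde Z$ is smooth and projective of dimension $n-c$, $H^{l-2c}(\widetilde Z,\QQ)$ is pure of weight $l-2c$, so its image under $\tilde\imath_*$ (a morphism of MHS after the Tate twist coming from the codimension-$c$ proper pushforward) lies in pure weight $l$. The claim reduces to showing that the proper pushforward $\pi_*\colon H^{\BM}_{2n-l}(\widetilde Z,\QQ)\to H^{\BM}_{2n-l}(Z,\QQ)$ surjects onto the top-weight piece. This is the main obstacle; I would proceed by induction on $\dim Z$, using the excision long exact sequence for the closed inclusion $Z^{\mathrm{sing}}\subset Z$ (noting that $\pi$ is an isomorphism over $Z-Z^{\mathrm{sing}}$ and that $\dim Z^{\mathrm{sing}}<\dim Z$), combined with the strictness of MHS morphisms with respect to the weight filtration (Hodge II), which ensures that surjectivity on the relevant weight-graded pieces suffices to conclude.
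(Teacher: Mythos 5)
The paper does not actually reprove this statement: it is quoted from Deligne \cite[Corollaire 8.2.8, Remarque 8.2.9]{HodgeIII}, and your plan is essentially Deligne's weight argument, so the overall route is the right one. One intermediate assertion, however, is wrong as stated: the kernel of $H^l_Z(X,\QQ)\to H^l(X,\QQ)$ is not ``the weight-${>}\,l$ subspace'' (a mixed Hodge structure has no canonical such subspace), and the image is not canonically $\Gr^W_l H^l_Z(X,\QQ)$, because the kernel can meet weight $l$: for $X$ a smooth projective surface and $Z$ the union of two disjoint linearly equivalent curves, $H^2_Z(X,\QQ)$ is pure of weight $2$ while the Gysin map kills the difference of the two component classes. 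Fortunately nothing is lost: all your argument uses is that the image of the Gysin map equals the image of $W_lH^l_Z(X,\QQ)$, and this follows from strictness of morphisms of MHS together with purity of $H^l(X,\QQ)$ alone (no analysis of the long exact sequence of the pair is needed), combined with the fact that $Z$ is proper, so that via purity $H^l_Z(X,\QQ)\simeq H^{\BM}_{2n-l}(Z,\QQ)(-n)$ has weights $\geq l$ and $W_l$ is its lowest graded piece.

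The genuine content is then exactly the lemma you flag: that $\pi_*:H^{\BM}_{2n-l}(\widetilde Z,\QQ)\to H^{\BM}_{2n-l}(Z,\QQ)$ hits the whole weight $-(2n-l)$ piece (note this is the lowest, not the top, weight of the Borel--Moore homology of a proper variety). It is dual to the standard fact that $\Gr^W_{2n-l}H^{2n-l}(Z,\QQ)\to H^{2n-l}(\widetilde Z,\QQ)$ is injective, which Deligne proves using smooth proper hypercoverings. Your excision induction can be made to work, but be aware of what it proves: after lifting the restriction of a lowest-weight class to $Z- Z^{\mathrm{sing}}$ through $H^{\BM}_{2n-l}(\widetilde Z,\QQ)$ (using strictness and the weight bound $\geq -(2n-l-1)$ on $H^{\BM}_{2n-l-1}$ of the exceptional set), the remaining error term is supported on $Z^{\mathrm{sing}}$ and, by induction, is only shown to be a pushforward from a resolution of (a stratum of) $Z^{\mathrm{sing}}$, not from $\widetilde Z$ itself. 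Thus the induction yields surjectivity from a disjoint union of smooth proper varieties of dimension $\leq\dim Z$, not from the single resolution; this weaker statement is perfectly sufficient for strong coniveau (these extra sources have dimension $\leq n-c$, and one pads with projective spaces as in \S\ref{2filtrations}), but you should either phrase the reduction that way or invoke the hypercovering statement for the single-resolution version. Finally, treat the irreducible components of $Z$, which may have different dimensions, separately, and keep track of the Tate twists so that the pushforwards land in the asserted degrees.
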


We now gather general properties of the coniveau and strong coniveau filtration, valid for any coefficient group $A$.

\begin{proposition}
\label{egaliteconiveau}
Let $X$ be a smooth complex variety of dimension $n$ and let $A$ be an abelian group. If $l\leq 2c$ or if $n\leq l-c$, then $\widetilde N^c H^l(X,A)= N^c H^l(X,A)$.
\end{proposition}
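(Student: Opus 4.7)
The inclusion $\widetilde N^c H^l(X,A) \subseteq N^c H^l(X,A)$ is observed in \S\ref{2filtrations}, so only the reverse inclusion remains. The plan is to fix $\alpha \in N^c H^l(X,A)$ with a witness closed $Z \subseteq X$ of codimension $\geq c$ such that $\alpha$ is the image of some $\tilde\alpha \in H^l_Z(X,A)$, and to rewrite everything via Poincar\'e--Lefschetz duality on the smooth variety $X$: under the identifications $H^l_Z(X,A) \cong H^{\BM}_{2n-l}(Z,A)$, $H^l(X,A) \cong H^{\BM}_{2n-l}(X,A)$, and $H^{l-2r}(Y,A) \cong H^{\BM}_{2n-l}(Y,A)$ for any smooth $Y$ of complex dimension $n-r$, all Gysin pushforwards become proper BM-pushforwards. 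The task reduces to realising $\iota_{Z,*}\tilde\alpha$ as a BM-pushforward from a smooth variety of dimension $\leq n-c$.

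Case 1 ($l \leq 2c$) is short: the degree $k := 2n-l \geq 2(n-c) \geq 2\dim Z$ is at or above the top BM-degree of $Z$. For $l < 2c$ the group $H^{\BM}_k(Z,A)$ vanishes and $\alpha = 0$; for $l = 2c$ one has $H^{\BM}_{2(n-c)}(Z,A) = \bigoplus_i A\cdot[Z_i]$ freely generated by the fundamental classes of the codimension-$c$ irreducible components $Z_i$ of $Z$, and since $[Z_i] = (\iota_{Z_i}\circ\pi_i)_*(1)$ for any desingularization $\pi_i: \tilde Z_i \to Z_i$, the class $\alpha$ lies in $\widetilde N^c H^l(X,A)$.

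Case 2 ($n \leq l-c$, equivalently $k \leq n-c$) is the substantive one, and I would proceed by induction on $\dim Z$. Choose a desingularization $\pi: \tilde Z \to Z$ isomorphic over $Z - S$ with $S = \Sing Z$, and set $E = \pi^{-1}(S)$; the pairs $(Z,S)$ and $(\tilde Z, E)$ fit into compatible BM-homology excision sequences sharing the middle term $H^{\BM}_*(Z-S) = H^{\BM}_*(\tilde Z - E)$. By induction applied to $S$ (of strictly smaller dimension, still of codimension $\geq c$), pushforwards from $H^{\BM}_k(S,A)$ already lie in $\widetilde N^c H^l(X,A)$, so it suffices to adjust $\tilde\alpha$ modulo the image of $H^{\BM}_k(S,A) \to H^{\BM}_k(Z,A)$ until its restriction $\tilde\alpha|_{Z-S}$ lifts to a class in $H^{\BM}_k(\tilde Z, A)$; pushing such a lift through $\iota\circ\pi$ then lands in $\widetilde N^c H^l(X,A)$. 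The obstruction to the lift is the boundary class $\partial(\tilde\alpha|_{Z-S}) \in H^{\BM}_{k-1}(E,A)$, and naturality of the excision sequences (plus the fact that $\tilde\alpha$ itself comes from $H^{\BM}_k(Z,A)$) forces its image in $H^{\BM}_{k-1}(S,A)$ to vanish; the factorisation $E \to S \hookrightarrow X$ then shows the obstruction dies in $H^{\BM}_{k-1}(X,A)$ as well.

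The main obstacle is the concluding step of Case 2: although $\partial(\tilde\alpha|_{Z-S})$ is killed when pushed to $X$, in order to lift $\tilde\alpha|_{Z-S}$ to $\tilde Z$ one needs it to vanish already in $H^{\BM}_{k-1}(E,A)$. I would bridge the gap by applying the inductive hypothesis at cohomology degree $l+1$ to the exceptional locus $E$, absorbing the obstruction through pushforwards from an auxiliary smooth resolution of $E$; the inequality $k \leq n-c$ guarantees $n \leq (l+1)-c$, so the induction closes. Morally, this step expresses the topological fact that the extra BM-classes contributed by the singularities of $Z$ are killed by embedding into the smooth ambient $X$.
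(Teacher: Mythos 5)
Your Case 1 is fine and is essentially the paper's argument: for $l<2c$ the group $N^cH^l(X,A)$ vanishes, and for $l=2c$ it consists of algebraic classes, each of which is the pushforward of $1$ from a desingularization of a codimension-$c$ cycle.

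Case 2, however, has a genuine gap exactly at the point you flag, and the proposed bridge does not close it. Adjusting $\tilde\alpha$ by the image of $H^{\BM}_k(S,A)\to H^{\BM}_k(Z,A)$ does not change $\tilde\alpha|_{Z-S}$ at all (the composite $H^{\BM}_k(S,A)\to H^{\BM}_k(Z,A)\to H^{\BM}_k(Z-S,A)$ is zero), so it cannot alter the obstruction $\partial(\tilde\alpha|_{Z-S})\in H^{\BM}_{k-1}(E,A)$: either $\tilde\alpha|_{Z-S}$ lifts to $\widetilde Z$ or it does not, and no modification supported on $S$ helps. Likewise, invoking the inductive hypothesis in degree $l+1$ gives nothing: as you yourself note, the image of the obstruction in $H^{\BM}_{k-1}(S,A)$, hence in $H^{\BM}_{k-1}(X,A)$, is already zero, so the statement that coniveau $\geq c$ classes of degree $l+1$ on $X$ have strong coniveau $\geq c$ is vacuous for it; and ``absorbing the obstruction through pushforwards from a resolution of $E$'' has no mechanism, since such pushforwards only add to $\alpha$ classes supported on $S$, which again leave $\tilde\alpha|_{Z-S}$ and its boundary untouched. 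The deeper issue is that your Case 2 is an attempt to prove the stronger claim that every class supported on a codimension $\geq c$ closed subset is a pushforward from a smooth variety of dimension $\leq n-c$, by resolving the support; this is precisely Grothendieck's retracted argument, and the main theorems of this paper show it fails in general -- the class in $H^{\BM}_{k-1}(E,A)$ you run into is a genuine obstruction. So the hypothesis $n\leq l-c$ must be used globally, not through the support $Z$, and your argument never does so in a substantive way.

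For comparison, the paper's proof of this case is short and global: reduce to $X$ quasi-projective by Chow's lemma, let $Z$ be the intersection of $X$ with a general codimension-$c$ linear space in a projective embedding (smooth by Bertini), and apply Hamm's Lefschetz theorem $c$ times to conclude that the Gysin map $H^{l-2c}(Z,A)\to H^l(X,A)$ is surjective once $n\leq l-c$. Thus in this range every class of $H^l(X,A)$, not only those of coniveau $\geq c$, has strong coniveau $\geq c$, which gives the equality at once.
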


\begin{proof}
Arguing as in \cite[VI, Lemma 9.1 and below]{milne}, we see that $N^c H^l(X,A)=0$ if $l<2c$ and consists of algebraic classes if $l=2c$. 
If $\alpha\in N^cH^{2c}(X,A)$ 
is the class of a subvariety $Z$ of codimension $c$ in $X$ and if $\pi:\widetilde Z\to Z$ is a desingularization of $Z$, then $\alpha$ is the image of $1$ by the Gysin morphism $H^0(\widetilde Z,A)\to H^{2c}(X,A)$.
This proves the first assertion.

To prove the second assertion, we may assume that $X$ is quasi-projective by Chow's lemma. Let $Z\subset X$ be the intersection of $X$ with a general codimension $c$ linear space in some projective embedding. Then $Z$ is smooth by the Bertini theorem and the Gysin morphism $H^{l-2c}(Z,A)\to H^l(X,A)$ is surjective by Hamm's Lefschetz theorem \cite[Theorem 2 and Remark below]{Hamm} applied $c$ times.
%other ref = [Goresky MacPherson, Stratified Morse theory, Theorem p. 153]
\end{proof}

\begin{lemma}
\label{timesPn}
The group $N^1H^l(X,A)/\widetilde N^1H^l(X,A)$ is invariant under replacing $X$ with $X\times \PP^n$ for all $l\geq 0$ and all abelian groups~$A$.
\end{lemma}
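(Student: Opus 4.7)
Set $Y:=X\times\PP^{n}$, $p\colon Y\to X$ the projection, and $i\colon X\hookrightarrow Y$ the inclusion of a fiber $X\times\{pt\}$. The plan is to show that $p^{*}$ and $i^{*}$ descend to mutually inverse isomorphisms between the quotients $N^{1}H^{l}(X,A)/\widetilde{N}^{1}H^{l}(X,A)$ and $N^{1}H^{l}(Y,A)/\widetilde{N}^{1}H^{l}(Y,A)$.

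First I would verify that $p^{*}$ preserves both filtrations: the preimage of a codimension $\geq 1$ subvariety of $X$ has codimension $\geq 1$ in $Y$, and flat base change identifies $p^{*}f_{*}\gamma$ with $(f\times\id)_{*}p_{W}^{*}\gamma$, a pushforward from the smooth variety $W\times\PP^{n}$ of dimension $\leq\dim Y-1$. I would then prove the analogous assertions for $i^{*}$. Here it matters that $i^{*}$ is independent of $pt$ up to homotopy, which lets us choose $pt$ generically in each argument. A fiber-dimension estimate for the projection $Z\to\PP^{n}$ shows that $Z\cap(X\times\{pt\})$ has codimension $\geq 1$ in the fiber for generic $pt$ whenever $Z\subset Y$ does, so $i^{*}N^{1}\subseteq N^{1}$. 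For $\widetilde{N}^{1}$, generic smoothness of the composition $W\to Y\to\PP^{n}$ in characteristic zero yields, for generic $pt$, a smooth $W':=f^{-1}(X\times\{pt\})$ of dimension $\leq\dim X-1$, and transverse Gysin base change along the resulting Cartesian square gives $i^{*}f_{*}\gamma=f'_{*}(\gamma|_{W'})\in\widetilde{N}^{1}H^{l}(X,A)$.

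Since $p\circ i=\id_{X}$, we have $i^{*}p^{*}=\id$. For the other composition I would invoke the projective bundle decomposition to write any $\beta\in H^{l}(Y,A)$ uniquely as $\beta=\sum_{j=0}^{n}p^{*}\beta_{j}\cdot h^{j}$, with $h$ the hyperplane class of $\PP^{n}$; since $i^{*}h=0$, this yields $i^{*}\beta=\beta_{0}$, and therefore
\[\beta-p^{*}i^{*}\beta=\sum_{j=1}^{n}p^{*}\beta_{j}\cdot h^{j}.\]
For each $j\geq 1$, the projection formula applied to the inclusion $X\times L\hookrightarrow Y$ of the product with a codimension $j$ linear subspace $L\subset\PP^{n}$ realises $p^{*}\beta_{j}\cdot h^{j}$ as a Gysin pushforward from the smooth variety $X\times L$, placing it in $\widetilde{N}^{j}H^{l}(Y,A)\subseteq\widetilde{N}^{1}H^{l}(Y,A)$. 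Hence both induced maps on the quotients are the identity, finishing the proof. The main technical point is the transverse Gysin base change controlling $i^{*}$ on $\widetilde{N}^{1}$; this is a standard property of transverse regular embeddings of smooth varieties, valid for any coefficient group $A$.
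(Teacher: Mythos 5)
Your proof is correct and follows essentially the same route as the paper: the key steps — the K\"unneth/projective-bundle decomposition of $H^l(X\times\PP^n,A)$ and restriction to a general fiber $X\times\{pt\}$ via Bertini/generic smoothness together with Gysin base change — are exactly those of the paper's argument. The only difference is organizational (you package it as mutually inverse maps $p^*$, $i^*$ on the quotients and check directly that $i^*$ preserves $N^1$, whereas the paper reduces modulo $\widetilde N^1$ first), which changes nothing essential.
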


\begin{proof}
Let $\pi : X\times \PP^n\to X$ denote the first projection. Using the K\"unneth theorem, we see that $N^1H^l(X\times \PP^n,A)=\pi^*N^1H^l(X,A) \mod \widetilde N^1H^l(X\times \PP^n,A)$. So it suffices to show that a class $\alpha\in H^l(X,A)$ has strong coniveau $\ge 1$ if and only if $\pi^*\alpha$ does. The `only if' direction is clear. Conversely, suppose that $\pi^*\alpha$ has strong coniveau $\ge 1$. We may assume that $\pi^*\alpha=f_*\beta$, where $f:V\to X\times \PP^n$ is a proper morphism from a smooth variety of dimension $\dim X+n-1$, and $\beta\in H^{l-2}(V,A)$. Let $i: X\to X\times \PP^n$ denote the inclusion of a general fiber $X\times \{p\}$. Then $W=V\times_{X\times \PP^n}X$ is a smooth subvariety of $V$ of dimension $\dim X-1$, by Bertini's theorem. Let $g: W\to X$ be the induced map. Then $g_*(\beta|_W)=i^*f_*\beta=i^*\pi^*\alpha=\alpha$, so $\alpha$ has strong coniveau~$\geq$~1.
\end{proof}

\begin{proposition}
\label{birinv}
The group $N^1H^l(X,A)/\widetilde N^1H^l(X,A)$ is a stable birational invariant of smooth projective complex varieties for all $l\geq 0$ and all abelian groups~$A$.
\end{proposition}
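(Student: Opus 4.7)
The plan is to use Lemma~\ref{timesPn} to reduce to birational invariance, then to invoke the weak factorization theorem to reduce further to the case of a single blow-up $\pi:\widetilde X=\Bl_Z X\to X$ along a smooth center $Z$ of codimension $\geq 2$, and finally to analyze that blow-up using the cohomological blow-up formula together with a correspondence argument for the pullback $\pi^*$. By Lemma~\ref{timesPn}, the quotient $N^1 H^l(-,A)/\widetilde N^1 H^l(-,A)$ is unchanged under product with $\PP^n$, so it suffices to show it is a birational invariant of smooth projective varieties. By weak factorization, any birational map between smooth projective varieties factors as a finite sequence of blow-ups and blow-downs along smooth centers, so the problem reduces to showing that $\pi^*$ induces an isomorphism $N^1 H^l(X,A)/\widetilde N^1 H^l(X,A)\xrightarrow{\sim}N^1 H^l(\widetilde X,A)/\widetilde N^1 H^l(\widetilde X,A)$ for the blow-up $\pi$.

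\smallskip

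For this, I would establish four properties of $\pi$: (a) $\pi_*\circ\pi^*=\id$ on $H^l(X,A)$, since $\pi$ has degree one; (b) both $\pi^*$ and $\pi_*$ preserve the coniveau filtration $N^1$, via the localization sequence and properness; (c) $\pi_*$ preserves $\widetilde N^1$, as compositions of proper morphisms are proper; and (d) $\pi^*$ preserves $\widetilde N^1$, which is the main technical point. For (d) I would use the correspondence description~\eqref{alternastrong}: given $\Gamma'\in\CH^k(Y\times X)$ with $\dim Y=k-r$ and $r\geq 1$, the transpose $\Gamma_\pi^t\in\CH^n(X\times\widetilde X)$ of the graph of $\pi$ realizes $\pi^*$ on cohomology (via projection formula applied to the inclusion $(\pi,\id):\widetilde X\hookrightarrow X\times\widetilde X$), and the composition $\Gamma_\pi^t\circ\Gamma'\in\CH^k(Y\times\widetilde X)$ is a correspondence of the required codimension, so $\pi^*(\Gamma'_*\beta)=(\Gamma_\pi^t\circ\Gamma')_*\beta$ lies in $\widetilde N^r H^l(\widetilde X,A)\subseteq\widetilde N^1 H^l(\widetilde X,A)$.

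\smallskip

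To finish, the blow-up formula provides a decomposition $H^l(\widetilde X,A)=\pi^*H^l(X,A)\oplus K$, where $K$ is generated by Gysin pushforwards along $j:E\hookrightarrow\widetilde X$ from the smooth exceptional divisor $E$ of dimension $n-1$. In particular $K\subseteq\widetilde N^1 H^l(\widetilde X,A)$ and $K=\ker\pi_*$. Injectivity on the quotient follows from (a) and (c): if $\pi^*\alpha\in\widetilde N^1$ with $\alpha\in N^1 H^l(X,A)$, then $\alpha=\pi_*\pi^*\alpha\in\widetilde N^1 H^l(X,A)$. Surjectivity follows by writing any $\alpha\in N^1 H^l(\widetilde X,A)$ as $\pi^*\pi_*\alpha+(\alpha-\pi^*\pi_*\alpha)$, noting that $\pi_*\alpha\in N^1 H^l(X,A)$ by (b) and that $\alpha-\pi^*\pi_*\alpha\in K\subseteq\widetilde N^1 H^l(\widetilde X,A)$. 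The main obstacle is step (d), which the correspondence framework resolves cleanly.
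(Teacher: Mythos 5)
Your proof is correct and follows essentially the same route as the paper's: reduce to birational invariance via Lemma~\ref{timesPn}, factor into blow-ups (the paper uses a lemma of Voisin but explicitly offers weak factorization as an alternative), show $\pi^*$ and $\pi_*$ preserve both filtrations using correspondences and $\pi_*\pi^*=\id$, and conclude via the blow-up decomposition with the exceptional-divisor classes landing in $\widetilde N^1$. The only cosmetic difference is that the paper deduces $N^1$-preservation from the general functoriality of coniveau under correspondences via the Bloch--Ogus description $N^1H^l=\ker(H^l\to H^0(\mathcal{H}^l))$, whereas you argue directly for $\pi^*$ and $\pi_*$.
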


\begin{proof}
By Lemma \ref{timesPn}, we only need to show the birational invariance of the group $N^1H^l(X,A)/\widetilde N^1H^l(X,A)$.

The action $\Gamma_*:H^l(Y,A)\to H^l(X,A)$ of a correspondence $\Gamma\in\CH^n(Y\times X)$ between smooth projective complex varieties of dimension $n$ preserves the classes of coniveau $\geq 1$, and the classes of strong coniveau $\geq 1$. The assertion concerning coniveau follows from the formula $N^1H^l(X,A)=\ker\left(H^l(X,A)\to H^0(X,\mathcal{H}^l(A))\right)$ recalled in \S\ref{coniveautorsion} below, and from the fact that the correspondence $\Gamma$ induces a morphism $\Gamma_*:H^0(Y,\mathcal{H}^l(A))\to H^0(X,\mathcal{H}^l(A))$ (see \cite[Proposition A.1]{colliotvoisin}). The assertion for strong coniveau follows from the equality (\ref{alternastrong}).
In view of these facts and of Hironaka's theorem on resolution of singularities, we may apply \cite[Lemma~1.9]{voisinnotes} with $I(X)=N^1H^l(X,A)/\widetilde N^1H^l(X,A)$. This lemma reduces us to proving the invariance of $N^1H^l(X,A)/\widetilde N^1H^l(X,A)$ under a blow-up 
$\pi: Y\to X$ of a smooth projective complex variety in a smooth center. (Alternatively, we could have used the weak factorization theorem \cite[Theorem 0.1.1]{weakfacto}.)

Computing the cohomology of a blow-up shows that $H^l(Y,A)$ is generated by the image of the injective morphism $\pi^*:H^l(X,A)\to H^l(Y,A)$ and by classes supported on the exceptional divisor $E$ of $\pi$. The latter classes have strong coniveau $\geq~1$ since $E$ is smooth. Moreover, since $\alpha=\pi_*\pi^*\alpha$, the functoriality of coniveau $\geq 1$ and strong coniveau $\geq 1$ classes under the action of correspondences shows that $\alpha$ has coniveau $\geq 1$ if and only if $\pi^*\alpha$ has coniveau $\geq 1$, and that $\alpha$ has strong coniveau $\geq 1$ if and only if $\pi^*\alpha$ has strong coniveau $\geq 1$. These facts imply the desired equality $N^1H^l(X,A)/\widetilde N^1H^l(X,A)=N^1H^l(Y,A)/\widetilde N^1H^l(Y,A)$.
\end{proof}

\begin{corollary}
\label{rational}
If $X$ is a smooth projective complex variety which is stably rational, then $\widetilde N^1H^l(X,A)=N^1H^l(X,A)$ for every $l\geq 0$ and every abelian group $A$.
\end{corollary}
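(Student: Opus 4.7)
The plan is to reduce the corollary to a trivial computation on projective space using the stable birational invariance already established in Proposition~\ref{birinv}.

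First I would invoke Proposition~\ref{birinv}, which tells us that the quotient $N^1H^l(X,A)/\widetilde N^1H^l(X,A)$ is a stable birational invariant of smooth projective complex varieties. Since $X$ is stably rational by hypothesis, it is stably birationally equivalent to $\PP^N$ for every $N\geq 0$; thus it suffices to verify the equality $\widetilde N^1H^l(\PP^N,A)=N^1H^l(\PP^N,A)$ for all $l,N\geq 0$ and all abelian groups $A$.

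Next I would verify this directly on projective space. The cohomology $H^*(\PP^N,A)$ vanishes in odd degrees and is isomorphic to $A$ in each even degree $l=2k$ with $0\leq k\leq N$, generated by the class of a linear subspace $i:\PP^{N-k}\hookrightarrow\PP^N$. In degree $0$, both filtration steps trivially vanish since every nonempty Zariski open subset of $\PP^N$ is connected. In degree $2k$ with $k\geq 1$, every element of $H^{2k}(\PP^N,A)$ has the form $i_*a$ for some $a\in H^0(\PP^{N-k},A)=A$, where $i$ is the closed immersion of a smooth subvariety of codimension $k\geq 1$; this immediately exhibits the entire group as having strong coniveau $\geq 1$, so the two filtration steps coincide.

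There is no real obstacle here: the substantive content has already been packaged into Proposition~\ref{birinv}, and the remaining verification on $\PP^N$ is essentially tautological, the only mild point being to handle $l=0$ separately before invoking the linear-subspace presentation of $H^{2k}(\PP^N,A)$ for $k\geq 1$.
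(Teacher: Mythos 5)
Your proof is correct and is exactly the argument the paper intends (the paper states the result as a corollary of Proposition~\ref{birinv} without spelling out the verification on $\PP^N$, and separately remarks it could also be deduced from Proposition~\ref{decdiag}). Your reduction via stable birational invariance plus the observation that on $\PP^N$ every cohomology class in positive degree is pushed forward from a linear subspace is the intended route.
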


Corollary \ref{rational} could also have been deduced from the following proposition.

\begin{proposition}
\label{decdiag}
If a smooth projective complex variety $X$ admits an integral cohomological decomposition of the diagonal, then $\widetilde N^1H^l(X,A)=N^1H^l(X,A)=H^l(X,A)$ for every $l\geq 1$ and every abelian group $A$.
\end{proposition}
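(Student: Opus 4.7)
The strategy is to exploit the decomposition $[\Delta_X]=[X\times x]+[Z]\in H^{2n}(X\times X,\ZZ)$, with $n=\dim X$ and $Z$ a cycle supported on $D\times X$ for some divisor $D\subset X$, together with the fact that $[\Delta_X]$ acts as the identity on cohomology through the correspondence action. For a class $\alpha\in H^l(X,A)$ with $l\geq 1$, the plan is to write $\alpha=[\Delta_X]^T_*\alpha=[X\times x]^T_*\alpha+[Z]^T_*\alpha$ using the transpose action $[\Gamma]^T_*(\gamma):=p_{1*}(p_2^*\gamma\cdot[\Gamma])$ (which still equals the identity when applied to $[\Delta_X]$), and to check that the first summand vanishes for dimensional reasons while the second is a proper pushforward from a desingularization of $D$.

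For the first summand, the projection formula gives $[X\times x]^T_*\alpha=p_{1*}p_2^*(\alpha\cdot[x])$, and $\alpha\cdot[x]\in H^{l+2n}(X,A)=0$ for $l\geq 1$ (since $X$ has real dimension $2n$), so this term vanishes. For the second summand, let $\pi\colon\widetilde D\to D$ be a resolution of singularities and $\tilde\pi=\iota\circ\pi\colon\widetilde D\to X$, where $\iota\colon D\hookrightarrow X$ is the inclusion. Lifting the irreducible components of $Z$ via $\id\times\pi$ produces a cycle class $[\widetilde Z]\in H^{2(n-1)}(\widetilde D\times X,\ZZ)$ with $(\tilde\pi\times\id)_*[\widetilde Z]=[Z]$. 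Applying the projection formula for the proper morphism $\tilde\pi\times\id$ and simplifying the composite projections then yields
$$[Z]^T_*\alpha \;=\; \tilde\pi_*\bigl(q_{1*}(q_2^*\alpha\cdot[\widetilde Z])\bigr),$$
where $q_1\colon\widetilde D\times X\to\widetilde D$ and $q_2\colon\widetilde D\times X\to X$ are the two projections. The inner expression lies in $H^{l-2}(\widetilde D,A)$, and $\tilde\pi$ is proper from a smooth variety of dimension $n-1$, so the right-hand side lies in $\widetilde N^1H^l(X,A)$.

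Combining these two computations gives $\alpha\in\widetilde N^1H^l(X,A)$ for every $l\geq 1$, whence $\widetilde N^1H^l(X,A)=H^l(X,A)$ and the inclusions $\widetilde N^1H^l(X,A)\subseteq N^1H^l(X,A)\subseteq H^l(X,A)$ collapse to equalities. The one point that requires care is the cycle-level lift $(\tilde\pi\times\id)_*[\widetilde Z]=[Z]$: this reduces to the standard fact that if $\widetilde V\to V$ is the strict transform of an irreducible component $V\subset D\times X$ of $Z$, it is birational, and Gysin pushforward of the corresponding class along the proper map $\tilde\pi\times\id$ between smooth varieties recovers $[V]$.
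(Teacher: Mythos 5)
Your overall strategy is the same as the paper's: act on $\alpha$ by the (transposed) decomposition of the diagonal, kill the $[X\times x]$ term for degree reasons, and exhibit the remaining term as a proper pushforward from a smooth variety of dimension $n-1$. The bookkeeping with the projection formula and the vanishing $\alpha\cdot[x]\in H^{l+2n}(X,A)=0$ is correct. The gap is exactly at the point you flag as "the one point that requires care": the integral lift $(\tilde\pi\times\id)_*[\widetilde Z]=[Z]$ is \emph{not} a consequence of the standard strict-transform fact. The strict transform of a component $V\subset D\times X$ of $Z$ under $\pi\times\id\colon \widetilde D\times X\to D\times X$ is birational onto $V$ only when $V$ meets the locus over which $\pi$ is an isomorphism; nothing in the decomposition prevents a component of $Z$ from being contained in $\Sigma\times X$, where $\Sigma\subset D$ is the non-isomorphism locus of $\pi$ (e.g.\ $\Sing(D)$), and such components exist in general (indeed $\Sigma\times X$ can have dimension well above $n$; even $\{\mathrm{pt}\}\times X$ can be a component of $Z$). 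For such a component the strict transform is empty, and what one can produce on $\widetilde D\times X$ are only multisections of $(\pi\times\id)^{-1}(V)\to V$, whose pushforwards are integer multiples $d[V]$ with $d$ possibly $>1$; since the statement is for an \emph{integral} decomposition and arbitrary coefficients $A$, you cannot clear denominators, so the lift of $[Z]$ to $H^{2(n-1)}(\widetilde D\times X,\ZZ)$ is not established.

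This is fixable, but it requires a genuinely different handling of $Z$ (which is what the paper does): resolve the support of $Z$ itself, i.e.\ take $\pi\colon\widetilde Z\to X\times X$ a resolution of the components of $\Supp Z$, where the cycle lifts tautologically, and then choose the resolutions so that the first projection restricted to $\widetilde Z$ factors as $f\circ g$ with $g\colon\widetilde Z\to Y$ and $f\colon Y\to X$, $Y$ being a disjoint union of resolutions of the images of the components of $\Supp Z$ in $X$ (so $\dim Y\leq n-1$); then $[Z]^T_*\alpha=f_*g_*([\widetilde Z']\smile\pi^*p_2^*\alpha)$ lies in $\widetilde N^1H^l(X,A)$. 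Alternatively, one could salvage your route by choosing the divisor $D$ \emph{after} fixing $Z$, so that the generic point of the image in $X$ of each component of $Z$ is a smooth point of $D$ over which $\pi$ is an isomorphism; but as written, the lifting step is a genuine gap.
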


\begin{proof}
Choose $\alpha\in H^l(X,A)$. Let $p,q:X\times X\to X$ be the two projections and let $[\Delta]=[x\times X+\Gamma]$ be the decomposition, where the support $Z\subset X\times X$ of $\Gamma$ satisfies $q(Z)\subsetneq X$.
Let $Y$ be a disjoint union of resolutions of singularities of the images by $q$ of the irreducible components of $Z$, with induced morphism $f:Y\to X$. Let $\widetilde{Z}\to Z$ be a resolution of singularities such that, letting
$\pi:\widetilde{Z}\to X\times X$ denote its composition with the inclusion, there exists a morphism $g:\widetilde{Z}\to Y$ with $q\circ\pi=f\circ g$. Let $\widetilde{\Gamma}$ be a cycle on $\widetilde{Z}$ such that $\pi_*\widetilde\Gamma=\Gamma$. Then 
$$\alpha=q_*([\Delta]\smile p^*\alpha)=q_*([\Gamma]\smile p^*\alpha)=q_*\pi_*([\widetilde\Gamma]\smile \pi^*p^*\alpha)=f_*g_*([\widetilde\Gamma]\smile \pi^*p^*\alpha)$$
is in the image of $f_*$, hence has strong coniveau $\geq 1$. This proves the proposition.
\end{proof}

Propositions \ref{birinv} and \ref{decdiag} do not hold in general for higher coniveau; we will see later that there are even rational varieties where $\widetilde N^2H^l(X,\ZZ)\neq N^2 H^l(X,\ZZ)$ (see Theorem \ref{main} (ii)). 

Beyond the above general results, coniveau and strong coniveau may be shown to coincide in particular geometric situations, as in the next example.

\begin{example}
\label{exAJ}
Let $X$ be a smooth projective complex threefold such that there exist a smooth projective surface $F$ and a correspondence $\Gamma\in \CH^2(F\times X)$ for which $[\Gamma]_*:H^3(F,\ZZ)\to H^3(X,\ZZ)$ is surjective. Then 
$\widetilde N^1 H^l(X,\ZZ)=N^1 H^l(X,\ZZ)$ for all $l\ge 0$. For $l\neq 3$, this follows from Proposition \ref{egaliteconiveau}. For $l=3$, 
take a smooth ample divisor $i:C\hookrightarrow F$ so that $i_*:H^1(C,\ZZ)\to H^3(F,\ZZ)$ is surjective by the weak Lefschetz theorem, represent $(i,\Id)^*\Gamma\in \CH^2(C\times X)$ by a codimension $2$ cycle $Z$ on $C\times F$, and let $\pi:\widetilde{Z}\to C\times X$ be a resolution of singularities of the support of~$Z$. Then $(p_2\circ\pi)_*:H^1(\widetilde{Z},\ZZ)\to H^3(X,\ZZ)$ is surjective as wanted.

Taking $F$ to be an appropriate Fano variety parametrizing curves on $X$, and $\Gamma$ to be the class of the universal curve, this argument applies to all smooth cubic threefolds \cite[Theorem~11.19]{clemensgriffiths}, general quartic threefolds \cite[Proposition~1]{letizia}, 
%Letizia onbly shows isomorphism modulo torsion
general sextic double solids \cite[Theorem 3.3]{ceresaverra} 
%same here might be torsion in H^3(F,Z)
and general Gushel--Mukai threefolds \cite[Theorem~p.~84]{iliev}. In the last three examples, the argument works for all $X$ whose Fano variety $F$ is a smooth surface (or even a surface with isolated singularities as its hyperplane section $C$ may then be chosen to avoid its singular locus).

Similarly, if $X\subset \PP^5$ is a smooth cubic fourfold, then the variety of lines $F$ is a smooth fourfold and the Abel--Jacobi map $q_*p^* : H^6(F,\ZZ) \to H^4(X,\ZZ)$ is an isomorphism (as it is dual to the Beauville--Donagi isomorphism of \cite[Proposition~4]{beauvilledonagi}). Hence $\widetilde N^1H^4(X,\ZZ)=N^1H^4(X,\ZZ)$.
\end{example}

\subsection{Coniveau \texorpdfstring{$\geq 1$}{>=1} classes and torsion classes}
\label{coniveautorsion}

The classes of coniveau $\geq 1$ are of particular interest. Letting $\mathcal{H}^q(A)$ denote the sheaf associated with the Zariski presheaf $U\mapsto H^q(U,A)$ on $X$, Bloch and Ogus \cite[Corollary 6.3]{blochogus} have shown the existence of a spectral sequence $E_2^{pq}=H^p(X,\mathcal{H}^q(A)) \Rightarrow H^{p+q}(X,A)$ converging to the coniveau filtration on $H^{p+q}(X,A)$. In particular, the kernel of the natural map $H^l(X,A)\to H^0(X,\mathcal{H}^{l}(A))$ consists of the classes of coniveau $\ge 1$.

The following proposition, a consequence of the Bloch--Kato conjecture proven by Voevodsky and Rost, had been conjectured by Bloch \cite[end of Lecture~5]{blochbook}. A proof may be found in
\cite[Proof of Theorem 1 (ii)]{blochsrinivas} for $l=3$, and 
in \cite[Th\'eor\`eme~3.1]{colliotvoisin} in general.

\begin{proposition}\label{torsionc}
If $X$ is a smooth complex variety, any torsion class $\alpha\in H^l(X,\ZZ)$ has coniveau $\geq 1$.
\end{proposition}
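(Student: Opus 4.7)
My approach would follow the strategy introduced by Bloch in low degree and extended by Colliot-Th\'el\`ene--Voisin in general, which relies essentially on the Bloch--Kato conjecture proved by Voevodsky, Rost \emph{et al.}

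\textbf{Step 1 (reduction to a sheaf being torsion-free).} By the Bloch--Ogus description recalled immediately above,
$$N^1 H^l(X,\ZZ) \;=\; \ker\bigl(H^l(X,\ZZ)\to H^0(X,\mathcal{H}^l(\ZZ))\bigr).$$
Since global sections of a torsion-free sheaf of abelian groups are torsion-free, it thus suffices to prove that the Zariski sheaf $\mathcal{H}^l(\ZZ)$ is torsion-free on every smooth complex variety: any torsion class in $H^l(X,\ZZ)$ will then map to zero in $H^0(X,\mathcal{H}^l(\ZZ))$ and hence lie in $N^1H^l(X,\ZZ)$.

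\textbf{Step 2 (Bockstein reduction).} For each integer $n\geq 1$, the Bockstein long exact sequence of Zariski sheaves associated with $0\to\ZZ\xrightarrow{n}\ZZ\to\ZZ/n\to 0$ reads
$$\mathcal{H}^{l-1}(\ZZ)\longrightarrow\mathcal{H}^{l-1}(\ZZ/n)\xrightarrow{\partial_n}\mathcal{H}^l(\ZZ)\xrightarrow{n}\mathcal{H}^l(\ZZ),$$
so the required torsion-freeness is equivalent, for every $n$, to the vanishing of $\partial_n$, or equivalently to the surjectivity of the reduction map $\mathcal{H}^{l-1}(\ZZ)\to\mathcal{H}^{l-1}(\ZZ/n)$. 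Surjectivity of a map of sheaves can be tested on stalks at generic points of irreducible closed subvarieties of $X$, reducing the problem to a statement about finitely generated extensions $K/\CC$: every class in the colimit $\varinjlim_U H^{l-1}(U,\ZZ/n)$ (over smooth models $U$ of subfields of finite type in $K$) should be the reduction mod $n$ of a class in $\varinjlim_U H^{l-1}(U,\ZZ)$.

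\textbf{Step 3 (Bloch--Kato input).} The crucial tool is now the Bloch--Kato (equivalently, Beilinson--Lichtenbaum) theorem, which identifies the generic stalk of $\mathcal{H}^{l-1}(\ZZ/n)$ with the Milnor K-theory group $K^M_{l-1}(K)/n$. In particular that stalk is generated by products of degree-one symbols $\{f_1\}\cup\cdots\cup\{f_{l-1}\}$ with $f_i\in K^\times$. For each such product, on a sufficiently small Zariski open where every $f_i$ is invertible, one obtains an integral lift by cupping integral lifts of the degree-one factors; the existence of these degree-one lifts after further Zariski shrinking amounts to killing a torsion Bockstein obstruction in $H^2$, which can be handled by induction on $l$ starting from the geometrically transparent base case $l=2$ (where torsion in $H^2(X,\ZZ)$ comes from $\Pic(X)_\tors$ via $c_1$ and is represented by an effective divisor).

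The essential obstacle is Step 3: reducing to a single symbol requires the deep Bloch--Kato theorem, without which Zariski-local lifting of arbitrary mod-$n$ classes is not accessible. The complete argument along these lines is carried out in \cite[Th\'eor\`eme 3.1]{colliotvoisin}, with the case $l=3$ treated earlier in \cite[Proof of Theorem 1 (ii)]{blochsrinivas} using the then-available Merkurjev--Suslin theorem in place of the full Bloch--Kato conjecture.
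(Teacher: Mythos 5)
Your Step 1 is exactly the paper's proof: the paper deduces the statement from the torsion-freeness of the Zariski sheaf $\mathcal{H}^l(\ZZ)$, quoted from \cite[Th\'eor\`eme 3.1]{colliotvoisin}, together with the Bloch--Ogus identification of $\ker\bigl(H^l(X,\ZZ)\to H^0(X,\mathcal{H}^l(\ZZ))\bigr)$ with the classes of coniveau $\geq 1$. Your Steps 2--3 only sketch the proof of that cited result (and slightly misstate the lifting of symbols, which is direct --- cup the pullbacks under $f_i:U\to\CC^*$ of the generator of $H^1(\CC^*,\ZZ)$ --- with no Bockstein obstruction in $H^2$ or induction on $l$ needed), but since you ultimately defer to the citation, as the paper does, this does not affect correctness.
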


\begin{proof}
The image of $\alpha$ by the natural morphism $H^l(X,\ZZ)\to H^0(X,\mathcal{H}^l(\ZZ))$ is zero because $\mathcal{H}^l(\ZZ)$ is torsion-free by
\cite[Th\'eor\`eme 3.1]{colliotvoisin}. This concludes, since the kernel consists of classes of coniveau $\geq 1$. 
\end{proof}

\section{Topological obstructions}
\label{topo}

In this section, we describe two obstructions to integral cohomology classes of smooth projective complex varieties having high strong coniveau (Propositions \ref{obst2} and \ref{obst1}), which rely respectively on Steenrod operations (studied in \S\ref{Steenrod}) and on complex cobordism (considered in \S\ref{MU}).

 Our obstructions are of topological nature, reminiscent of Thom's counterexamples to the integral Steenrod problem \cite[Th\'eor\`emes III.5, III.9]{thom}. We formulate them in their natural generality (Propositions \ref{push2} and \ref{push1}, and \S\ref{MUoriented}).

\subsection{Steenrod operations}
\label{Steenrod}

The obstruction described in Proposition \ref{obst2} is based on carefully chosen elements $(S_j)_{j\geq 1}$ of the Steenrod algebra, which behave particularly well with respect to pushforward morphisms (see Proposition~\ref{SjGysin}).

Let $\A$ be the mod $2$ Steenrod algebra (see \cite{steenrod}). We recall that it is a graded $\ZZ/2$-algebra generated by degree $i$ elements $\Sq^i$ for $i\geq 0$, subject to the Adem relations
\begin{equation}
\label{Ádem}
\Sq^i\Sq^j=\sum_{k=0}^{\floor{j/2}}\binom{j-k-1}{i-2k}\Sq^{i+j-k}\Sq^k.
\end{equation}
The algebra $\A$ acts functorially on the mod $2$ cohomology of any topological space~$X$. For $\alpha,\beta\in H^*(X,\ZZ/2)$, this action satisfies Cartan's formula
\begin{equation}
\label{Cartan}
\Sq^i(\alpha\smile\beta)=\sum_{j=0}^i\Sq^j\alpha\smile\Sq^{i-j}\beta.
\end{equation}

Let $\A \Sq^1$ be the left ideal of $\A$ generated by~$\Sq^1$. 
 For $j\ge 1$, we define
\begin{equation}
\label{defSj}
S_j:=\Sq^{2^j-1} \cdots \Sq^7\Sq^3,
\end{equation}
which is an element of degree $2^{j+1}-j-3$ in $\A$ (by convention, $S_1$ is the unit of $\A$).

\begin{lemma}
\label{star1}
One has $\Sq^{2i-1}S_j\in \A \Sq^1$ for $j\geq 1$ and $1\leq i\leq 2^j-1$.
\end{lemma}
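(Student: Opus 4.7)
The plan is to induct on $j$. The base case $j=1$ is trivial: since $S_1=1$, the only case in range is $i=1$, for which $\Sq^{2i-1}S_1=\Sq^1\in\A\Sq^1$.

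For the inductive step with $j\ge 2$, I would factor $S_j=\Sq^{2^j-1}S_{j-1}$ and apply the Adem relation~\eqref{Ádem} to the leading pair $\Sq^{2i-1}\Sq^{2^j-1}$. The required inequality $2i-1<2(2^j-1)$ holds throughout $1\le i\le 2^j-1$, yielding
\[
\Sq^{2i-1}S_j \;=\; \sum_{k=0}^{2^{j-1}-1}\binom{2^j-k-2}{2i-1-2k}\,\Sq^{2^j+2i-2-k}\Sq^k S_{j-1}.
\]
I would then dispose of this sum according to the parity of~$k$.

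The terms with even~$k$ (including $k=0$) all drop out: the binomial coefficient has even top $2^j-k-2$ and odd bottom $2i-1-2k$, hence vanishes modulo~$2$ by Lucas's theorem. For odd $k$, writing $k=2i'-1$, the bound $k\le 2^{j-1}-1$ gives $1\le i'\le 2^{j-2}\le 2^{j-1}-1$ (using $j\ge 2$), so the inductive hypothesis applies and yields $\Sq^k S_{j-1}\in\A\Sq^1$. Since $\A\Sq^1$ is a left ideal, each surviving term stays in $\A\Sq^1$ after left-multiplication by $\Sq^{2^j+2i-2-k}$, and the whole sum therefore lies in $\A\Sq^1$, closing the induction.

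The only real content is the parity observation that every even-$k$ Adem coefficient is even; beyond that, the argument is a routine check that the odd-$k$ indices $i'$ fall within the range permitted by the inductive hypothesis.
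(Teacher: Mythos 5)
Your proof is correct and follows essentially the same route as the paper: induction on $j$, applying the Adem relation to $\Sq^{2i-1}\Sq^{2^j-1}$, discarding the even-$k$ terms via the parity of the binomial coefficients, and absorbing the odd-$k$ terms into $\A\Sq^1$ by the induction hypothesis and the left-ideal property. Your additional checks (applicability of the Adem relation and the range of $i'$ for the inductive hypothesis) are accurate and only make the argument more explicit.
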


\begin{proof}
The proof is by induction on $j$. The statement is clear for $j=1$. For $j>1$, use the Adem relation (\ref{Ádem}) to write
$$\Sq^{2i-1}S_j=\Sq^{2i-1}\Sq^{2^j-1}S_{j-1}= \sum_{k=0}^{2^{j-1}-1} \binom{2^j-k-2}{2i-2k-1} \Sq^{2^j+2i-k-2}\Sq^k S_{j-1}.$$ 
As $2i-2k-1$ is odd, $\binom{2^j-k-2}{2i-2k-1}$ is even whenever $2^j-k-2$ is even. It follows that the
only terms that contribute are those with $k$ odd. Since $\Sq^k S_{j-1}\in \A \Sq^1$ for those $k$
by the induction hypothesis, the lemma is proved.
\end{proof}

Proposition \ref{WuAH} is a relative variant of Wu's theorem \cite[Theorem 11.4]{MS} which was proven by Atiyah and Hirzebruch \cite[Satz~3.2]{AH}. 

We denote by $\Sq=\Sq^0+\Sq^1+\dots$ the total Steenrod operation and by $\Sq^{-1}$ its inverse. If $E-E'$ is a virtual real vector bundle, we let $w(E-E')=w(E)w(E')^{-1}$ be its total Stiefel--Whitney class.

\begin{proposition}
\label{WuAH}
Let $f:Y\to X$ be a proper $\ci$-map between $\ci$-manifolds with virtual normal bundle $N_f:=f^*T_{X}-T_{Y}$. For all $\beta\in H^*(Y,\ZZ/2)$, one has
$$\Sq (f_*\beta)=f_*(\Sq(\beta)\smile w(N_f))$$ in $H^*(X,\ZZ/2).$
\end{proposition}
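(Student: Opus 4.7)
The plan is to factor $f$ as a closed embedding into a trivial sphere bundle over $X$ followed by the projection, and to verify the formula in each of these two cases before combining them.

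First, Whitney's embedding theorem gives a proper embedding $\iota\colon Y\hookrightarrow\RR^N$ for some $N$. Then $g:=(f,\iota)\colon Y\to X\times\RR^N\subset X\times S^N$ is a proper injective immersion, hence a closed embedding as a closed submanifold of $X\times S^N$. Writing $p\colon X\times S^N\to X$ for the projection, we obtain a factorization $f=p\circ g$. A short virtual bundle calculation using $f^*T_X=g^*(T_{X\times S^N}-\pr_2^*TS^N)$ yields $N_f=N_g-\iota^*TS^N$, and since $TS^N$ is stably trivial one concludes $w(N_f)=w(N_g)$ in $H^*(Y,\ZZ/2)$.

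For the closed embedding $g$ I would work in the Thom space of the normal bundle $\nu=N_g$. The classical Wu formula $\Sq(U)=\pi^*w(\nu)\smile U$ for the Thom class $U$ (a relative version of \cite[Theorem~11.4]{MS}, proved in the same way), combined with the identification of $g_*\beta$ with the image of $U\smile\pi^*\beta$ under the Thom isomorphism followed by extension by zero, and with Cartan's formula (\ref{Cartan}), directly yields $\Sq(g_*\beta)=g_*(\Sq\beta\smile w(\nu))$. For the projection $p$, K\"unneth gives the decomposition of any class on $X\times S^N$ as $\alpha_0\otimes 1+\alpha_1\otimes\sigma$ where $\sigma\in H^N(S^N,\ZZ/2)$ is the generator; since $\Sq^i\sigma=0$ for $i>0$ by dimension reasons, the operation $\Sq$ commutes with integration along the fibre, so $\Sq(p_*\alpha)=p_*\Sq(\alpha)$.

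Putting the two cases together via the functoriality of pushforward,
\[
\Sq(f_*\beta)=p_*\,\Sq(g_*\beta)=p_*g_*(\Sq\beta\smile w(\nu))=f_*(\Sq\beta\smile w(N_f)),
\]
the final equality using $w(N_f)=w(\nu)$. The main obstacle in this approach is the closed embedding step, which rests on the Wu formula for the Thom class; the Whitney factorization and the trivial sphere bundle contribution are then essentially formal.
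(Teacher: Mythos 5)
Your proposal is correct and is essentially the argument the paper relies on: the paper simply cites Atiyah--Hirzebruch (Satz 3.2 of \cite{AH}, applied with $\lambda=\Sq^{-1}$ and Thom's definition of Stiefel--Whitney classes), and handles the noncompact case by exactly the factorization you use, namely the embedding $(f,\iota)\colon Y\to X\times\SS^N$ coming from Whitney's theorem. You have in effect unpacked that cited proof — Thom-class Wu formula for the closed embedding, K\"unneth for the projection off the sphere factor — rather than invoking it, which is fine.
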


\begin{proof}
When $X$ and $Y$ are compact, this is exactly \cite[Satz 3.2]{AH} applied with $\lambda=\Sq^{-1}$ (in \emph{loc.~cit.}, $\Wu(\Sq,X)=\Sq^{-1}w(T_X)$ by Thom's definition of the Stiefel--Whitney classes \cite[p.~91]{MS}).
As noted in \cite[Proof of Proposition 1.22]{BWI}, the standing assumption that manifolds are compact in \cite[\S 3]{AH} is superfluous. Indeed, one may assume that $Y$ is connected and choose an injective immersion $i:Y\to \SS^m$ thanks to Whitney's theorem \cite[Chapter 2, Theorem 2.14]{Hirsch}. The proof of \cite[Satz 3.2]{AH} then goes through using the embedding $(f,i):Y\to X\times\SS^m$.
\end{proof}

We now apply the relative Wu theorem to the cohomology operation $S_j$.
 
Let $E$ and $E'$ be two real vector bundles of constant rank on a $\ci$-manifold $X$.
%The good definition of a virtual vector bundle would be a map X->BO and of a stably complex structure would be a lift to X->BU.
%For a finite-dimensional CW complex X such as a C^infty manifold, one always lands in a fixed skeleton of BO or BU hence in a fixed BO(n) or BU(n). Hence the definitions below.
A \textit{stably complex structure} on the virtual bundle $E-E'$ is a homotopy class of isomorphisms $\iota:E\oplus \RR^k\simeq E'\oplus F$ where $F$ is a complex vector bundle, and where one identifies $\iota$ with $(\iota,\Id):E\oplus \RR^{k+2}\simeq E'\oplus F\oplus\CC$.
%The definition of "complex oriented" given by Voisin in [Chow Rings, Decomposition of the Diagonal, and the Topology of Families, §6.1] seems incorrect as a stably complex structure on a vector bundle in her sense would not specify an orientation of the vector bundle. It is more a definition of "complex orientable".
 A \textit{complex orientation} of a $\ci$-map $f:Y\to X$ between $\ci$-manifolds is a stably complex structure on the virtual normal bundle $N_f:=f^*T_{X}-T_Y$ of $f$.

\begin{proposition}
\label{SjGysin}
Let $f:Y\to X$ be a complex oriented proper $\ci$-map between $\ci$-manifolds. For all $j\geq 1$ and all $\beta\in H^*(Y,\ZZ/2)$ such that $\Sq^1(\beta)=0$, one has
$$S_j (f_*\beta)=f_*S_j(\beta)$$ in $H^*(X,\ZZ/2).$
\end{proposition}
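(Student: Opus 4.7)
The plan is induction on $j$. The base case $j=1$ is trivial since $S_1$ is the unit of $\A$. For the inductive step, suppose the identity holds for $j-1$, and let $\beta\in H^*(Y,\ZZ/2)$ satisfy $\Sq^1(\beta)=0$. Since $S_j=\Sq^{2^j-1}\,S_{j-1}$ in $\A$, and the induction hypothesis applies to $\beta$ itself, one has
$$S_j(f_*\beta)=\Sq^{2^j-1}\bigl(S_{j-1}(f_*\beta)\bigr)=\Sq^{2^j-1}\bigl(f_*(S_{j-1}\beta)\bigr)=\Sq^{2^j-1}(f_*\gamma),$$
where $\gamma:=S_{j-1}(\beta)$. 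The task is thereby reduced to showing that $\Sq^{2^j-1}$ commutes with $f_*$ when applied to $\gamma$.

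For this I apply the relative Wu theorem (Proposition \ref{WuAH}) componentwise, obtaining
$$\Sq^{2^j-1}(f_*\gamma)=\sum_{i+k=2^j-1}f_*\bigl(\Sq^i(\gamma)\smile w_k(N_f)\bigr).$$
The complex orientation now enters: a stably complex structure on $N_f$ forces $w(N_f)$ to be the mod $2$ reduction of the total Chern class, so all odd-degree components $w_{2s+1}(N_f)$ vanish. Since $2^j-1$ is odd, only the summands with $k$ even (hence $i$ odd) survive, say $i=2m-1$ with $1\le m\le 2^{j-1}$.

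The leading term $m=2^{j-1}$ (equivalently $k=0$) equals $f_*(\Sq^{2^j-1}\gamma)=f_*(S_j\beta)$, which is exactly the desired output. The remaining cross terms, indexed by $1\le m\le 2^{j-1}-1$, must all vanish, and this is where Lemma \ref{star1} and the hypothesis $\Sq^1(\beta)=0$ combine: the lemma, applied to $S_{j-1}$, gives $\Sq^{2m-1}\,S_{j-1}\in\A\,\Sq^1$ for precisely this range of $m$, so writing $\Sq^{2m-1}\,S_{j-1}=a\,\Sq^1$ with $a\in\A$ yields $\Sq^{2m-1}(\gamma)=a(\Sq^1\beta)=0$. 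This completes the induction.

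The proof is not obstructed by any single hard step but rests on the careful alignment of three independent ingredients: (i) the complex orientation of $f$, which kills odd Stiefel--Whitney classes and hence forces only odd-degree $\Sq^i$ to appear in the Wu expansion of $\Sq^{2^j-1}(f_*\gamma)$; (ii) the definition of $S_j$ as a product of odd-degree Steenrod squares, tuned (via Lemma \ref{star1}) so that $\Sq^{2m-1}\,S_{j-1}$ lies in the left ideal $\A\,\Sq^1$ in precisely the relevant range; and (iii) the hypothesis $\Sq^1\beta=0$, which annihilates the contribution of that left ideal. The only place that requires care in a full write-up is verifying that the index ranges from (i) and (ii) match up, but this is a short bookkeeping exercise once the strategy is fixed.
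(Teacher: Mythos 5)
Your proof is correct and follows essentially the same route as the paper's: induction on $j$, reduction via the induction hypothesis to commuting $\Sq^{2^j-1}$ past $f_*$, the relative Wu theorem of Proposition \ref{WuAH}, vanishing of odd Stiefel--Whitney classes of the stably complex virtual normal bundle, and Lemma \ref{star1} together with $\Sq^1\beta=0$ to kill the cross terms. The index bookkeeping (cross terms $1\le m\le 2^{j-1}-1$ exactly matching the range in Lemma \ref{star1} applied to $S_{j-1}$) is verified correctly.
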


\begin{proof}
The odd degree Stiefel--Whitney classes of a complex vector bundle vanish by \cite[Problem 14-B]{MS}. In view of Whitney's sum formula, the same holds for the odd degree Stiefel--Whitney classes of a stably complex virtual vector bundle such as $N_f=f^*T_{X}-T_{Y}$.

We prove the proposition by induction on $j$. The statement is clear for $j=1$. So assume $j>1$. We may assume that $\beta\in H^k(Y,\ZZ/2)$. By the induction hypothesis, 
$$S_j (f_*\beta)=\Sq^{2^j-1}S_{j-1}(f_*\beta)=\Sq^{2^j-1}(f_*S_{j-1}(\beta)).$$
By Proposition \ref{WuAH}, the class $S_j (f_*\beta)$ is the image by $f_*$ of the component of degree $2^{j+1}-j-3+k$ of $\Sq S_{j-1}(\beta)\smile w(N_f)$. Lemma \ref{star1}, the hypothesis that $\Sq^1(\beta)=0$ and the fact that $w(N_f)$ has no odd degree component show at once that 
\phantom\qedhere
$$S_j (f_*\beta)=f_*(\Sq^{2^j-1}S_{j-1}(\beta)\smile w_0(N_f))=f_*S_j(\beta).\eqno\qed$$
\end{proof}

We may now state the two main results of this section.

\begin{proposition}
\label{push2}
Let $f:Y\to X$ be a complex oriented proper $\ci$-map between $\ci$-manifolds, and let $\beta\in H^k(Y,\ZZ/2)$ be such that $\Sq^1(\beta)=0$. If $j\geq k$ and $j\geq 2$, then $S_j(f_*\beta)=0$ in $H^*(X,\ZZ/2)$.
\end{proposition}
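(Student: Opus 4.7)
The plan is to combine Proposition~\ref{SjGysin} with the instability axiom for Steenrod squares. Since the hypotheses on $f$ and $\beta$ are exactly those of Proposition~\ref{SjGysin}, one obtains $S_j(f_*\beta) = f_*(S_j(\beta))$ in $H^*(X,\ZZ/2)$, so it suffices to establish the purely topological statement that $S_j(\beta) = 0$ in $H^*(Y,\ZZ/2)$ whenever $\beta \in H^k(Y,\ZZ/2)$ and $j \geq \max(k,2)$. I emphasize that the assumption $\Sq^1(\beta)=0$ intervenes only through that reduction; it plays no role in the degree argument below.

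To prove the vanishing of $S_j(\beta)$, I would write $S_j = \Sq^{2^j-1}\Sq^{2^{j-1}-1}\cdots \Sq^7\Sq^3$ and track the degrees of the intermediate classes as the factors are applied from right to left. Let $d_l$ denote an upper bound for the degree reached after applying the factors $\Sq^3,\Sq^7,\ldots,\Sq^{2^l-1}$ to $\beta$; then
\[
d_l \;=\; k + \sum_{i=2}^{l}(2^i-1) \;=\; k + 2^{l+1} - l - 3.
\]
The next factor $\Sq^{2^{l+1}-1}$ annihilates the intermediate class by the instability axiom $\Sq^n\alpha = 0$ for $n > \deg(\alpha)$ as soon as $2^{l+1}-1 > d_l$, which rearranges to $l \geq k-1$. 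In particular, taking $l = k-1$, the factor $\Sq^{2^k-1}$ kills the class, and this factor is present in $S_j$ precisely under the hypothesis $j\geq k$. This handles all cases with $k\geq 2$. The remaining cases $k\leq 1$ are immediate, since already $\Sq^3(\beta)=0$ by instability.

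There is no substantial obstacle: everything reduces to the previous proposition together with a transparent degree count. The one arithmetic point worth verifying is the identity $d_{k-1} = 2^k - 2$, which confirms that the bound $j \geq k$ is exactly what is needed for the factor $\Sq^{2^k - 1}$ of $S_j$ to be the one that kills the class by instability.
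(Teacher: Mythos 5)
Your proof is correct and follows essentially the same route as the paper: apply Proposition~\ref{SjGysin} (using the hypotheses on $f$ and $\Sq^1(\beta)=0$) to reduce to the vanishing of $S_j(\beta)$, which then follows from the instability axiom by a degree count. The only cosmetic difference is that the paper applies instability to the outermost factor, noting that $S_{j-1}(\beta)$ has degree $2^j-j-2+k<2^j-1$ when $j\geq k$, whereas you locate the first factor $\Sq^{2^k-1}$ that already kills the class; both are the same degree-counting argument.
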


\begin{proof}
By Proposition \ref{SjGysin}, one has $S_j (f_*\beta)=f_*S_j(\beta)=f_*\Sq^{2^j-1}S_{j-1}(\beta)$. Since the class $S_{j-1}(\beta)$ has degree $2^{j}-j-2+k<2^j-1$, one has $\Sq^{2^j-1}S_{j-1}(\beta)=0$, and it follows that $S_j(f_*\beta)=0$.
\end{proof}

Since a morphism of smooth complex varieties is canonically complex oriented, we deduce at once the following proposition.

\begin{proposition}
\label{obst2}
On a smooth complex variety $X$, choose $\alpha\in H^l(X,\ZZ)$; let $\overline{\alpha}\in H^l(X,\ZZ/2)$ denote the reduction modulo $2$ of $\alpha$, and let $c$ and $j$ be integers such that $l\leq 2c+j$ and $j\geq 2$. If $S_j(\overline{\alpha})\neq 0$, then $\alpha$ has strong coniveau $<c$.
\end{proposition}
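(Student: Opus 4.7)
The plan is to argue by contradiction: assume that $\alpha$ has strong coniveau $\geq c$ and deduce that $S_j(\overline{\alpha})=0$, contradicting the hypothesis.

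First I would unwind the definition. If $\alpha\in \widetilde N^c H^l(X,\ZZ)$, write $\alpha=\sum_i f_{i*}\gamma_i$ as a finite sum, where $f_i\colon Y_i\to X$ is a proper morphism from a smooth complex variety $Y_i$ of codimension $r_i\geq c$ in $X$, and $\gamma_i\in H^{l-2r_i}(Y_i,\ZZ)$. Reducing modulo $2$ gives $\overline{\alpha}=\sum_i f_{i*}\overline{\gamma_i}$, so by linearity it suffices to show that $S_j(f_{i*}\overline{\gamma_i})=0$ for every $i$.

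Next I would verify the hypotheses of Proposition \ref{push2} for each term $\beta=\overline{\gamma_i}\in H^{k_i}(Y_i,\ZZ/2)$ with $k_i=l-2r_i$. The map $f_i$ is canonically complex oriented, since $N_{f_i}=f_i^*T_X-T_{Y_i}$ is a virtual complex vector bundle coming from the holomorphic tangent bundles. The condition $\Sq^1(\overline{\gamma_i})=0$ holds because $\Sq^1$ is the Bockstein associated with $0\to\ZZ/2\to\ZZ/4\to\ZZ/2\to 0$, and the mod $2$ reduction of any integral class lifts to $\ZZ/4$ (indeed it lifts to $\ZZ$), so this Bockstein vanishes on it. Finally, the inequality $k_i=l-2r_i\leq l-2c\leq j$ follows from the two standing assumptions, and $j\geq 2$ is given.

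Proposition \ref{push2} then yields $S_j(f_{i*}\overline{\gamma_i})=0$ for every $i$, hence $S_j(\overline{\alpha})=0$, contradicting the assumption $S_j(\overline{\alpha})\neq 0$. The argument is essentially bookkeeping on top of Proposition \ref{push2}; the only step that is not purely formal is the vanishing of $\Sq^1$ on mod $2$ reductions of integral classes, but this is a standard property of the Bockstein and poses no real obstacle.
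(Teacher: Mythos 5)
Your proof is correct and takes essentially the same route as the paper, which deduces Proposition \ref{obst2} directly from Proposition \ref{push2} via the canonical complex orientation of a morphism of smooth complex varieties. You simply spell out the routine bookkeeping that the paper leaves implicit (linearity of $S_j$, the vanishing $\Sq^1(\overline{\gamma_i})=0$ for mod $2$ reductions of integral classes, and the degree inequality $l-2r_i\leq j$), and all of it is sound.
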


\subsection{Complex cobordism}
\label{MU}

We now use complex cobordism to obtain refinements of Proposition \ref{push2} when $k\leq 2$ and of Proposition \ref{obst2} when $l\leq 2c+2$. These improvements are not needed in the proofs of our main theorems.

To every topological space $X$, one can associate its complex cobordism ring $MU^*(X)$, which is a graded ring. These rings form a generalized cohomology theory, represented by the complex cobordism spectrum $\mathbf{MU}$ (see for instance \cite[Chapter 12]{switzer} or \cite{adams-stable}). In this article, we will be only interested in the complex cobordism of $\ci$-manifolds. In this setting, Quillen \cite[\S 1]{quillen} gave  
%see [https://www.math.ucla.edu/~carrick/Thesis.pdf, \S 3] for more details. Also \cite{karoubi}.
a concrete description of $MU^*(X)$ which we briefly recall.

 Let $X$ be a $\ci$-manifold. Two proper $\ci$-maps $g_0:Z_0\to X$ and $g_1:Z_1\to X$ that are complex oriented (in the sense recalled in \S\ref{Steenrod}) are said to be \textit{cobordant} if there exists a complex oriented proper $\ci$-map $\widetilde{g}:\widetilde{Z}\to X\times\RR$ such that, for $i\in\{0,1\}$,  $\widetilde{g}$ is transversal to the inclusion $X\times\{i\}\hookrightarrow X\times\RR$, and $g_i$ identifies with $\widetilde{g}|_{\widetilde{g}^{-1}(X\times\{i\})}$ as a complex oriented $\ci$-map. For $r\in\ZZ$, Quillen identifies $MU^r(X)$ with the set of cobordism classes $[g]$ of complex oriented proper $\ci$-maps $g:Z\to X$ from a $\ci$-manifold $Z$ of dimension $\dim(X)-r$, with disjoint union as a group law \cite[Proposition 1.2]{quillen}. 
 %It really makes no difference to consider C^0 or C^infty maps of manifolds (by C^infty approximation). Quillen chooses that everything is C^infty and I follow him.

The above definition makes it clear how to construct Gysin morphisms in complex cobordism. If $f:Y\to X$ is a proper complex oriented map between $\ci$-manifolds, one can define $f_*:MU^r(Y)\to MU^{r+\dim(X)-\dim(Y)}(X)$ by sending the class $[g]$ represented by a complex oriented proper $\ci$-map $g:Z\to Y$ to $[f\circ g]$ \cite[\S 1.4]{quillen}.

As complex cobordism is the universal complex oriented cohomology theory \cite[II, Lemma 4.6]{adams-stable},
%\cite[Proposition 4.4.6]{Kochman}, 
the complex orientation of cohomology with integral coefficients \cite[II, Example (2.2)]{adams-stable} 
%\cite[\S 4.3 Example (1)]{Kochman}
yields a natural transformation \cite[II, Example~(4.7)]{adams-stable} 
\begin{equation}
\label{MUHZ}
\mu:MU^*(-)\to H^*(-,\ZZ).
\end{equation}
When $X$ is a $\ci$-manifold, the image by $\mu$ of a class in $MU^*(X)$ represented by a complex oriented proper $\ci$-map $g:Z\to X$ is $\mu([g])=g_*1\in H^*(X,\ZZ)$, where $g_*:H^*(Z,\ZZ)\to H^*(X,\ZZ)$ is the Gysin morphism (the complex orientation of~$g$ induces an orientation of the virtual vector bundle $N_g$, hence allows to define Gysin morphisms by \cite[V, Definition~2.11~(b)]{rudyak}) and $1\in H^0(Z,\ZZ)$ is the unit   (see \cite[\S 9]{karoubi}). It is clear from this description that if $f:Y\to X$ is a proper complex oriented map between $\ci$-manifolds and if $\gamma\in MU^*(Y,\ZZ)$, then 
\begin{equation}
\label{MUHGysin}
\mu(f_*\gamma)=f_*\mu(\gamma).
\end{equation}

The next proposition is well-known
(see for instance \cite[p.~468]{totaro}).
%or \cite[p.~963]{quick}, but beware that the proof sketched in \cite{quick} is incorrect).
%and may easily be deduced from Milnor's paper on complex cobordism \cite[Theorem 2]{Milnor}.
In the setting of oriented cobordism, the last assertion originates from Thom's work \cite[Th\'eor\`eme II.20]{thom}.
We let $\A\Sq^1\hspace{-.3em}\A$ be the two-sided ideal of $\A$ generated by~$\Sq^1$. 

\begin{proposition}
\label{SqMU}
If $X$ is a topological space and $r\geq 0$, the image of the morphism
\begin{equation}
\label{MUHZ/2}
MU^r(X)\xrightarrow{\mu} H^r(X,\ZZ)
\end{equation}
induced by (\ref{MUHZ}) is killed by stable integral cohomological operations of positive degree, and the reduction modulo $2$ of a class in the image of (\ref{MUHZ/2}) is annihilated by $\A\Sq^1\hspace{-.3em}\A$.
\end{proposition}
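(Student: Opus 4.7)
The plan is to argue at the level of spectra and reduce both statements to one underlying fact: $H^*(\mathbf{MU},\ZZ)$ is torsion-free, which follows from Milnor's computation $H_*(\mathbf{MU},\ZZ)=\ZZ[b_1,b_2,\dots]$ on even-degree generators.

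For the first statement, I would identify a stable integral cohomological operation of degree $d$ with a class $\theta\in H^d(\mathbf{HZ},\ZZ)$, where $\mathbf{HZ}$ is the integer Eilenberg--MacLane spectrum, so that the operation applied to $\mu(\gamma)$ corresponds to $(\mu^*\theta)(\gamma)$ in $H^d(\mathbf{MU},\ZZ)$. Vanishing of the operation on the image of $\mu$ thus reduces to the vanishing of $\mu^*\colon H^d(\mathbf{HZ},\ZZ)\to H^d(\mathbf{MU},\ZZ)$ for $d>0$, which follows from Serre's computation of $H^*(K(\ZZ,n),\QQ)$ (yielding $H^{>0}(\mathbf{HZ},\QQ)=0$ stably, hence $H^{>0}(\mathbf{HZ},\ZZ)$ pure torsion) together with the torsion-freeness of $H^*(\mathbf{MU},\ZZ)$: a map from a torsion group to a torsion-free group is zero.

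For the second statement, I would write any $\phi\in\A\Sq^1\A$ as a finite sum $\sum_i\psi_i\Sq^1\chi_i$ and reduce to showing $\Sq^1\chi(\bar\alpha)=0$ for every $\chi\in\A$ and every $\bar\alpha$ in the image of $(\mathrm{mod}\,2)\circ\mu$. The composition $\chi\circ(\mathrm{mod}\,2)\circ\mu\colon\mathbf{MU}\to\Sigma^d\mathbf{H}(\ZZ/2)$ defines a class in $H^d(\mathbf{MU},\ZZ/2)$; by torsion-freeness of $H^*(\mathbf{MU},\ZZ)$ and universal coefficients, $H^d(\mathbf{MU},\ZZ/2)=H^d(\mathbf{MU},\ZZ)\otimes\ZZ/2$, so this class lifts to an integral class $\widetilde\theta\in H^d(\mathbf{MU},\ZZ)$. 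Evaluating at $\gamma\in MU^r(X)$, one sees that $\chi(\bar\alpha)$ is itself the mod $2$ reduction of the integral class $\widetilde\theta(\gamma)\in H^{r+d}(X,\ZZ)$. Since $\Sq^1$ is the mod $2$ reduction of the integral Bockstein, and the integral Bockstein vanishes on mod $2$ reductions of integral classes by exactness of the Bockstein sequence, one concludes $\Sq^1\chi(\bar\alpha)=0$ and hence $\phi(\bar\alpha)=0$.

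The only substantial ingredient is the torsion-freeness of $H^*(\mathbf{MU},\ZZ)$; once this input is invoked, everything else is formal, and no genuine obstacle remains beyond being willing to work at the spectrum level so that "natural transformations $MU^*\to H^*(-,\ZZ)$" literally correspond to elements of $H^*(\mathbf{MU},\ZZ)$. The Bockstein vanishing and the rational triviality of $H^{>0}(\mathbf{HZ},\QQ)$ are elementary in comparison.
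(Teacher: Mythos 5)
Your argument is correct, and for the first assertion it is the paper's proof verbatim in substance: a stable integral operation of degree $k>0$ is a class in $H^k(\mathbf{H}\ZZ,\ZZ)$, which is torsion (the paper cites Cartan where you invoke Serre's rational computation plus finite generation), so its image in the torsion-free group $H^k(\mathbf{MU},\ZZ)$ vanishes, i.e.\ the composite $\mathbf{MU}\to\mathbf{H}\ZZ\to\Sigma^k\mathbf{H}\ZZ$ is null. For the second assertion your route differs slightly from the paper's. The paper deduces it directly from the first assertion: for $a\in\A$, the integral operation $\beta_{\ZZ}\circ a\circ\rho$ (of positive degree) lifts $\Sq^1\circ a\circ\rho$, so $\Sq^1 a(\overline{\alpha})=\rho\bigl(\beta_{\ZZ}a\rho(\alpha)\bigr)=0$, and hence all of $\A\Sq^1\hspace{-.1em}\A$ kills $\overline{\alpha}$. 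You instead lift the mod $2$ class $\chi\circ\rho\circ\mu\in H^d(\mathbf{MU},\ZZ/2)$ to an integral class on $\mathbf{MU}$, using again the torsion-freeness of $H^*(\mathbf{MU},\ZZ)$ (via the Bockstein sequence or universal coefficients for the connective finite-type spectrum $\mathbf{MU}$), and conclude that $\chi(\overline{\alpha})$ is the reduction of an integral class, hence killed by $\Sq^1$. Both variants rest on the single input you identify; the paper's version is marginally more economical in that it reuses the first assertion and needs no universal-coefficient statement at the spectrum level, while yours makes explicit that every mod $2$ operation applied to a class in the image of $\mu$ already lifts integrally, which is a slightly stronger intermediate statement.
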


\begin{proof}
Consider a stable integral cohomological operation of degree $k>0$, induced by a map of spectra $\nu: \mathbf{H}\ZZ\to\Sigma^k\mathbf{H}\ZZ$, where $\mathbf{H}\ZZ$ is the Eilenberg--MacLane spectrum representing cohomology with integral coefficients, and let $\mu:\mathbf{MU}\to\mathbf{H}\ZZ$ be the map of spectra inducing (\ref{MUHZ}). The morphism $H^k(\mathbf{H}\ZZ,\ZZ)\to H^k(\mathbf{MU},\ZZ)$ induced by $\mu$ sends the class represented by $\nu$ to that represented by $\nu\circ\mu$. Since $H^k(\mathbf{H}\ZZ,\ZZ)$ is torsion by \cite[\S 6]{cartan} and $H^k(\mathbf{MU},\ZZ)$ is torsion-free by \cite[I, \S 3]{adams-stable}, we deduce that $\nu\circ \mu$ is homotopically trivial, which proves the first assertion.

Let $\rho$ and $\beta_{\ZZ}$ denote reduction modulo $2$ and the integral Bockstein. The second assertion follows from the first since the stable integral cohomological operation $\beta_{\ZZ}\circ a \circ \rho$ is a lift of $\Sq^1\circ\ a\circ\rho$ for all $a\in \A$.
\end{proof}

We finally reach the goal of this section.

\begin{proposition}
\label{push1}
Let $f:Y\to X$ be a complex oriented proper $\ci$-map between $\ci$-manifolds, and let $\beta\in H^k(Y,\ZZ)$. If $k\leq 2$, then $f_*\beta$ is in the image of the morphism $\mu: MU^*(X)\to H^*(X,\ZZ)$ induced by (\ref{MUHZ}) and its reduction modulo $2$ is killed by $\A\Sq^1\A$.
\end{proposition}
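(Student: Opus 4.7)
The plan is to establish the stronger claim that $\beta$ itself admits a lift $\gamma \in MU^k(Y)$ under $\mu$; then by the naturality of $\mu$ with respect to Gysin maps \eqref{MUHGysin}, the pushforward $f_*\beta = f_*\mu(\gamma) = \mu(f_*\gamma)$ lies in the image of $\mu_X: MU^*(X) \to H^*(X,\ZZ)$, and Proposition~\ref{SqMU} immediately yields the annihilation of its mod~$2$ reduction by $\A\Sq^1\A$. In other words, the whole statement reduces to a lifting result on $Y$ that is independent of $f$.

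The cases $k = 0$ and $k = 2$ are straightforward. For $k = 0$, the class $\beta$ is a $\ZZ$-linear combination of the units on the connected components of $Y$, each of which lifts tautologically to the unit of $MU^0$ of that component. For $k = 2$, write $\beta = c_1(L)$ for a complex line bundle $L$ on $Y$; since $MU$ is complex-oriented, the $MU$-theoretic first Chern class $c_1^{MU}(L) \in MU^2(Y)$ lifts $\beta$. Geometrically, if $s$ is a smooth section of $L$ transverse to zero with zero locus $Z$, then $i: Z \hookrightarrow Y$ is a complex-oriented inclusion with normal bundle $L|_Z$, so that $[i] \in MU^2(Y)$ and $\mu([i]) = i_*(1) = c_1(L) = \beta$.

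The main obstacle is the case $k = 1$, where the obvious geometric representative of $\beta$ has a real, not complex, normal bundle. Here I would use the Eilenberg--MacLane description $H^1(Y,\ZZ) = [Y, S^1]$ to represent $\beta$ by a smooth map $\phi: Y \to S^1$, fix a regular value $p \in S^1$, and form the inclusion $i: Z = \phi^{-1}(p) \hookrightarrow Y$. Since the generator $\iota \in H^1(S^1,\ZZ)$ is the Gysin pushforward $j_*(1)$ of the inclusion $j: \{p\} \hookrightarrow S^1$, transverse base change along $\phi$ gives $\beta = \phi^*\iota = i_*(1)$. The delicate point is that the normal bundle $N_i \simeq \phi^*T_pS^1$ is a trivial real line bundle, so $i$ is not complex-oriented in any obvious way; the key step is to observe that the trivialization of $N_i$ provides a canonical stable complex structure via $N_i \oplus \RR \simeq \RR^2 = \CC$, in the precise sense of~\S\ref{Steenrod}. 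With this stable complex orientation, $[i]$ defines a class in $MU^1(Y)$ whose image under $\mu$ is $i_*(1) = \beta$.

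Combining the three cases with \eqref{MUHGysin} and Proposition~\ref{SqMU} then completes the proof. The crux, conceptually, is the $k=1$ case: one must resist the temptation to complain that a real codimension-one submanifold cannot be complex-oriented, and instead exploit the specific trivialization coming from $\phi$ to obtain an honest stably complex structure on $N_i$.
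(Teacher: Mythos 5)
Your proposal is correct and follows essentially the same route as the paper: lift $\beta$ to a class $\gamma\in MU^k(Y)$, use the compatibility (\ref{MUHGysin}) to write $f_*\beta=\mu(f_*\gamma)$, and conclude with Proposition \ref{SqMU}. The only difference is that where the paper invokes the easy half of Totaro's Theorem 2.2 for the surjectivity of $\mu:MU^k(Y)\to H^k(Y,\ZZ)$ in degrees $k\leq 2$, you reprove that lifting statement directly (units for $k=0$, $c_1^{MU}$ or a transverse section for $k=2$, and for $k=1$ the fiber of a smooth map to $S^1$ with the stably complex structure $N_i\oplus\RR\simeq\CC$ on its trivialized normal bundle), which is a valid and standard argument.
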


\begin{proof}
By the easy half of \cite[Theorem 2.2]{totaro}, whose proof is valid for any $X$, there exists a class $\gamma\in MU^*(Y)$ with $\mu(\gamma)=\beta$. By (\ref{MUHGysin}), one has $f_*\beta=\mu(f_*\gamma)$, which
proves the first assertion. The second now follows from Proposition \ref{SqMU}.
\end{proof}

As a morphism of smooth complex varieties is canonically complex oriented, we deduce:

\begin{proposition}
\label{obst1}
Let $X$ be a smooth complex variety, choose $\alpha\in H^l(X,\ZZ)$, let $\overline{\alpha}\in H^l(X,\ZZ/2)$ be the reduction modulo $2$ of $\alpha$, and let $c$ be such that $l\leq 2c+2$. If $\alpha$ is not in the image of $\mu: MU^*(X)\to H^*(X,\ZZ)$, or if $\overline{\alpha}$ is not killed by $\A\Sq^1\hspace{-.3em}\A$, then $\alpha$ has strong coniveau $<c$.
\end{proposition}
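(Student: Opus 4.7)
The approach is to deduce Proposition \ref{obst1} from Proposition \ref{push1} by contraposition, with no new topological input. Assume that $\alpha \in H^l(X,\ZZ)$ has strong coniveau $\geq c$. By the observation at the end of \S\ref{2filtrations} (which reduces to morphisms from varieties of dimension exactly $\dim(X)-c$ by crossing with a projective space), we may write $\alpha$ as a finite sum $\sum_i f_{i,*}\beta_i$ where each $f_i : Y_i \to X$ is a proper morphism from a smooth complex variety $Y_i$ of dimension $\dim(X)-c$, and each class $\beta_i$ lies in $H^{l-2c}(Y_i,\ZZ)$.

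The assumption $l \leq 2c+2$ forces $\deg \beta_i \leq 2$, which is precisely the range in which Proposition \ref{push1} is available. Moreover, since each $f_i$ is a morphism of smooth complex varieties, its virtual normal bundle $N_{f_i} = f_i^* T_X - T_{Y_i}$ is a virtual complex vector bundle, and thus carries a canonical complex orientation. Applying Proposition \ref{push1} to each $f_i$ with $\beta = \beta_i$, we conclude that every pushforward $f_{i,*}\beta_i$ lies in the image of $\mu : MU^*(X) \to H^*(X,\ZZ)$ and that its mod $2$ reduction is killed by $\A\Sq^1\hspace{-.3em}\A$.

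Both conditions - lying in $\mu(MU^l(X))$, and having mod $2$ reduction annihilated by any fixed set of stable cohomological operations - cut out additive subgroups of $H^l(X,\ZZ)$. Hence they are preserved under the sum $\alpha = \sum_i f_{i,*}\beta_i$, and the contrapositive is exactly the statement of the proposition. There is no real obstacle here: the whole content has been pushed back into Proposition \ref{push1}, and in turn into Proposition \ref{SqMU} together with the classical result of Totaro \cite[Theorem 2.2]{totaro} that degree $\leq 2$ integral cohomology classes of a topological space always lift to complex cobordism.
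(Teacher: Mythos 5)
Your proof is correct and coincides with the paper's (which dispatches the proposition in a single sentence: ``As a morphism of smooth complex varieties is canonically complex oriented, we deduce...''). You have simply unwound the deduction from Proposition~\ref{push1} explicitly, noting that the degree bound $l-2c\le 2$ puts the classes $\beta_i$ in the range where Proposition~\ref{push1} applies, and that both target conditions cut out subgroups and so respect the finite sum defining a strong coniveau class.
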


\subsection{A remark on the topological obstructions}
\label{MUoriented}

In the statements of Propositions \ref{push2} and \ref{push1}, one could replace the hypothesis that $f$ is complex oriented by the weaker hypothesis that its virtual normal bundle $N_f:=f^*T_{X}-T_{Y}$ is $MU$-oriented in the sense of \cite[\S V.1]{rudyak}.

Indeed, only two properties of a complex oriented map $f$ are used in the proofs of Propositions \ref{push2} and \ref{push1}: the existence of a Gysin morphism $f_*:MU^*(Y)\to MU^*(X)$ if $f$ is proper, and the fact that all the odd Stiefel--Whitney classes of $N_f$ vanish. Under the sole hypothesis that $N_f$ is $MU$-oriented, the first property is provided by \cite[V, Definition~2.11~(b)]{rudyak}. As for the second, every $MU$-oriented vector bundle $E$ has vanishing odd Stiefel--Whitney classes. To see it, write Thom's definition of Stiefel--Whitney classes based on Steenrod operations and on the Thom class of $E$ in mod $2$ cohomology \cite[p.~91]{MS}, notice that this Thom class lifts to complex cobordism as $E$ is $MU$-oriented, and apply Proposition \ref{SqMU}.

We refer to \cite{wilson} for an example of a real vector bundle which is $MU$-oriented but has no stably complex structure, which shows that this is a genuine generalization of Propositions \ref{push2} and \ref{push1}.
We will not use this generalization in what follows.

\section{Algebraic approximations of classifying spaces}
\label{BG}

The most direct way of producing $X$ and $\alpha$ where the obstructions of the previous section take place comes from algebraic
approximations to a classifying space $BG$. This was the construction used in the original counterexamples to the integral Hodge conjecture due to Atiyah and Hirzebruch \cite{AH2}.

\subsection{Torsion examples}
\label{2torsion}

We first use this technique with $G=(\ZZ/2)^s$ to prove Theorem \ref{mainthm}.

\begin{lemma}
\label{Sjnonzero}
For all $1\leq j\leq s$, there exists $\zeta\in H^s((\ZZ/2)^s,\ZZ/2)$ with $S_j\Sq^1(\zeta)\neq~0$.
\end{lemma}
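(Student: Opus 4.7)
Take $\zeta = x_1 x_2 \cdots x_s \in H^s((\ZZ/2)^s, \ZZ/2) = \ZZ/2[x_1, \ldots, x_s]$. The plan is to recognize $S_j \Sq^1$ as the product of Milnor primitives $Q_0 Q_1 \cdots Q_{j-1}$ in the Steenrod algebra, and then compute this action directly on $\zeta$.

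Recall the Milnor primitives $Q_i \in \A$ are defined inductively by $Q_0 = \Sq^1$ and $Q_{i+1} = \Sq^{2^{i+1}} Q_i + Q_i \Sq^{2^{i+1}}$. They are derivations on mod $2$ cohomology, they commute pairwise in $\A$ (in characteristic $2$), and on $H^*((\ZZ/2)^s, \ZZ/2)$ they act by $Q_i(x_k) = x_k^{2^{i+1}}$ while killing squares. The main identity I would need is
\begin{equation*}
S_j \Sq^1 = \Sq^{2^j - 1} \Sq^{2^{j-1} - 1} \cdots \Sq^3 \Sq^1 = Q_0 Q_1 \cdots Q_{j-1},
\end{equation*}
which I would establish by induction on $j$. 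The inductive step combines the hypothesis $S_{j-1}\Sq^1 = Q_0 \cdots Q_{j-2}$ with the recurrence defining $Q_{j-1}$, the nilpotency $Q_{j-2}^2 = 0$, and Adem relations (notably $\Sq^1 \Sq^{2k-2} = \Sq^{2k-1}$) to rewrite $\Sq^{2^j-1} Q_0 \cdots Q_{j-2}$ as $Q_0 \cdots Q_{j-1}$. Verifying this identity is the technical heart of the proof and the main obstacle.

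Granted the identity, expanding $Q_0 Q_1 \cdots Q_{j-1}(\zeta)$ using the Leibniz rule and the vanishing of each $Q_i$ on squares shows that only injective assignments of the $j$ derivations to distinct variables contribute; explicitly,
\begin{equation*}
Q_0 Q_1 \cdots Q_{j-1}(\zeta) = \sum_{\sigma} x_{\sigma(0)}^2 x_{\sigma(1)}^4 \cdots x_{\sigma(j-1)}^{2^j} \prod_{k \notin \mathrm{Im}(\sigma)} x_k,
\end{equation*}
where $\sigma$ ranges over injections $\{0,1,\ldots,j-1\} \hookrightarrow \{1,\ldots,s\}$. The exponent $2^{i+1}$ on $x_{\sigma(i)}$ uniquely recovers $i$ from the monomial, so these $s!/(s-j)!$ monomials are pairwise distinct. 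Since $j \leq s$, the set of injections is nonempty, so the sum is nonzero, completing the proof.
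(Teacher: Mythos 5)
Your choice $\zeta=x_1\cdots x_s$ is the same as the paper's, and your endgame is correct: each Milnor primitive $Q_i$ is a derivation with $Q_i(x_k)=x_k^{2^{i+1}}$ and kills squares, so $Q_0Q_1\cdots Q_{j-1}(x_1\cdots x_s)$ is the stated sum over injections, the exponents $2,4,\dots,2^j,1$ let one read off the injection from the monomial, and since $j\leq s$ the sum is a nonempty sum of pairwise distinct monomials, hence nonzero mod $2$. The genuine gap is the step you yourself call the main obstacle: the identity $S_j\Sq^1=\Sq^{2^j-1}\Sq^{2^{j-1}-1}\cdots\Sq^3\Sq^1=Q_0Q_1\cdots Q_{j-1}$ is asserted, not proved, and the inductive scheme you sketch does not visibly close. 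Writing $Q_{j-1}=\Sq^{2^{j-1}}Q_{j-2}+Q_{j-2}\Sq^{2^{j-1}}$ and killing one term with $Q_{j-2}^2=0$ leaves you with $Q_{j-2}\Sq^{2^{j-1}}Q_{j-2}\cdots Q_0$, and converting this into $\Sq^{2^j-1}Q_{j-2}\cdots Q_0$ needs a further round of Adem manipulations beyond the single relation $\Sq^1\Sq^{2k-2}=\Sq^{2k-1}$ you quote (already for $j=3$ one uses, e.g., $\Sq^3\Sq^4=\Sq^7$ and $\Sq^5\Sq^3=0$). The identity is in fact true and classical: $\Sq^{2^j-1}\cdots\Sq^3\Sq^1$ is the Milnor basis element $\Sq(1,1,\dots,1)$, i.e.\ the product $Q_0\cdots Q_{j-1}$ of the commuting exterior generators. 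So your approach is salvageable, but as written the proof is incomplete at its central point; you should either cite this identity or carry out the induction in full.

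For comparison, the paper sidesteps the Milnor-primitive identity entirely: it either cites Walker--Wood, or expands $\Sq^{2^j-1}\cdots\Sq^3\Sq^1(x_1\cdots x_s)$ directly with Cartan's formula and observes that the single monomial $x_1^{2^j}x_2^{2^{j-1}}\cdots x_j^2x_{j+1}\cdots x_s$ occurs with odd coefficient --- the same monomial bookkeeping your derivation argument produces, but obtained using only the Cartan formula applied to the given composite of Steenrod squares. Your route is conceptually cleaner once the identity is available (no cancellation analysis is needed, since distinct injections give distinct monomials), but all the real work, or a citation, has to go into that identity.
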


\begin{proof}
The K\"unneth formula yields an algebra isomorphism
$$H^*((\ZZ/2)^s,\ZZ/2)=\ZZ/2[x_1,\ldots,x_s]$$
with generators $x_i$ in degree $1$. Take $\zeta=x_1\cdots x_s$. 
Combining \cite[Definition~2.4.9 and Proposition 5.8.4]{walkerwood} shows that $S_j\Sq^1(\zeta)\neq 0$. Alternatively, developing 
$$S_j\Sq^1(\zeta)=\Sq^{2^j-1} \cdots\Sq^3\Sq^1(x_1\cdots x_s)$$
using Cartan's formula, we get a polynomial in which the monomial $$x_1^{2^j}x_2^{2^{j-1}} \cdots x_j^2 x_{j+1}\cdots x_{s-1}x_s$$
appears non-trivially. It follows that $S_j\Sq^1(\zeta)\neq 0$.
\end{proof}

\begin{lemma}
\label{GodeauxSerre}
For all $1\leq j\leq s$, there exist a smooth projective complex variety~$V$ with torsion canonical bundle
and a class $\xi\in H^s(V,\ZZ/2)$ with $S_{j}\Sq^1(\xi)\neq 0$.
\end{lemma}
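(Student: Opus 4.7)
The plan is to carry out a Godeaux--Serre type construction for $G = (\ZZ/2)^s$ and pull back the class of Lemma \ref{Sjnonzero} along an algebraic approximation of $BG$. More precisely, I would fix a faithful linear representation $G \to GL(W)$ with $\dim W = N+1$ very large (depending on $s$ and $j$), chosen so that $G$ acts freely outside a closed subset of $\PP(W)$ of large codimension. Such representations are easy to write down for $(\ZZ/2)^s$: take a direct sum of characters so that every nontrivial $g \in G$ acts with eigenvalue $-1$ on many summands, forcing its fixed locus in $\PP(W)$ to have large codimension.

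Next, I would choose $G$-invariant homogeneous polynomials $F_1, \dots, F_r$ on $W$ of degrees $d_1, \dots, d_r$ with $d_1 + \dots + d_r = N+1$, and let $\widetilde V \subset \PP(W)$ be their common zero locus. Using Bertini (applied to the space of $G$-invariant hypersurfaces of each degree) one can take the $F_i$ general so that $\widetilde V$ is smooth, avoids the locus where $G$ does not act freely, and has dimension $N - r$ as large as we wish while keeping $\sum d_i = N+1$. By adjunction, $K_{\widetilde V} = 0$, and since $\widetilde V$ is a complete intersection of dimension $\geq 2$ in projective space, it is simply connected by the Lefschetz hyperplane theorem. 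Setting $V := \widetilde V / G$, the map $\widetilde V \to V$ is a finite étale cover, so $V$ is smooth projective with torsion canonical bundle and with $\pi_1(V) = G$.

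The classifying map $V \to BG$ factors through the quotient $(W - S)/G \to BG$ of a large open in affine space, where $S$ is the non-free locus. Since $\widetilde V$ is a smooth complete intersection of dimension $N-r$, Lefschetz shows $\widetilde V$ is $(N-r-1)$-connected, so the classifying map $V \to BG$ induces an isomorphism on $H^i(-, \ZZ/2)$ for $i \leq N - r - 1$. Choose $r$ so that $N - r - 1 \geq s + 2^{j+1} - j - 2$, the degree of the class $S_j \Sq^1(\zeta)$ produced by Lemma \ref{Sjnonzero}. Pulling $\zeta$ back to $\xi \in H^s(V, \ZZ/2)$ and using naturality of Steenrod operations gives $S_j \Sq^1(\xi) \neq 0$, since the pullback is injective in the relevant degree.

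The only delicate point is verifying that the numerical constraints can be met simultaneously: we need $\sum d_i = N+1$, $\dim \widetilde V = N - r$ at least $s + 2^{j+1} - j - 1$, the $F_i$ general enough for smoothness and free action, and a faithful representation with non-free locus of large codimension. All of these are satisfied by taking $N$ sufficiently large and the $d_i$ small (for instance, many of the $d_i$ equal to $1$, one of them equal to $s+2$ or so), which is the standard Godeaux--Serre flexibility. Everything else (commutation of Steenrod operations with pullback, the Lefschetz connectivity, and adjunction) is completely formal.
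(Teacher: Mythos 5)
Your strategy is essentially the one the paper uses (a Godeaux--Serre approximation of $B(\ZZ/2)^s$, pulling back the class of Lemma \ref{Sjnonzero}); the paper just approximates $B\ZZ/2$ once by a free $\ZZ/2$-quotient $Y$ of a complete intersection of invariant quadrics and then takes $V=Y^s$, invoking K\"unneth. However, two steps in your write-up do not hold as stated. First, a smooth complete intersection $\widetilde V\subset\PP^N$ of dimension $N-r$ is \emph{not} $(N-r-1)$-connected: the Lefschetz hyperplane theorem only says that the inclusion $\widetilde V\hookrightarrow\PP^N$ is an equivalence through that range, so $\widetilde V$ has $\pi_2\cong\ZZ$ and nonzero even-degree cohomology (powers of the hyperplane class) far below the middle dimension. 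Consequently the classifying map $a\colon V\to BG$ is not a cohomology isomorphism in degrees $\leq N-r-1$ (already $H^2(V,\ZZ/2)$ contains the hyperplane class in addition to $a^*H^2(BG,\ZZ/2)$). What you need is only \emph{injectivity} of $a^*$ up to degree $s+2^{j+1}-j-2$, and that is true, but it requires an argument: since $|G|$ is even no transfer trick applies, and the standard fix is the one the paper cites, namely that the $G$-linearized $\O(1)$ descends to $V$ and the resulting map $V\to BG\times\PP^{\infty}(\CC)$ is a homotopy equivalence in the Lefschetz range \cite[Proposition 6.6]{AH2}.

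Second, the ``standard Godeaux--Serre flexibility'' you appeal to is not there, and your parenthetical choice of degrees fails. With $r-1$ linear forms and one form of degree about $s+2$, the constraint $\sum d_i=N+1$ forces $\dim\widetilde V=N-r=s$, which is smaller than the degree $s+2^{j+1}-j-2$ of the class $S_j\Sq^1(\xi)$ you must detect (so it dies for dimension reasons once $j\geq 2$); worse, $G$-invariant linear forms lie in the trivial isotypic part of $W^\vee$, hence vanish identically on the $(-1)$-eigenspace components $\PP(W_g^-)$ of the fixed loci, so cutting by them does not shrink the fixed locus and the action on a general such $\widetilde V$ is not free. In fact, freeness requires $r\geq\max\bigl(\dim W_g^+,\dim W_g^-\bigr)\geq (N+1)/2$ for every nontrivial $g$, which together with $\sum d_i=N+1$ and $d_i\geq 2$ forces $r=(N+1)/2$, all $d_i=2$, and all eigenspaces balanced (e.g.\ $W$ a sum of copies of the regular representation): this essentially unique configuration is exactly the shape of the paper's construction. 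With that choice (invariant quadrics contain all $x_i^2$, so Bertini gives smoothness and the intersections with the fixed loci are empty), and with the injectivity step repaired as above, your argument does go through.
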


\begin{proof}
We use the Godeaux--Serre construction. Define $m:=2^{j+1}-j-1+s$ and let $\ZZ/2$ act on $\PP^{2m+1}$ by the involution $$\iota:(X_0,\dots,X_{m},X_{m+1},\dots,X_{2m+1})\mapsto (X_0,\dots,X_{m},-X_{m+1},\dots,-X_{2m+1}).$$
The fixed locus of this action has dimension $m$. Let $Z\subset\PP^{2m+1}$ be a general complete intersection of $m+1$ $\iota$-invariant quadrics. The smooth projective variety $Z$ has trivial canonical bundle. Since $\iota$ acts freely on $Z$, the quotient $Y:=Z/\iota$ is a smooth projective variety with torsion canonical bundle (when $m=2$, this is the classical construction of Enriques surfaces). We choose $V:=Y^{s}$. By \cite[Proposition~6.6 and its proof]{AH2}, there exist maps $a:Y\to B\ZZ/2$ and ${b:Y\to \PP^{\infty}(\CC)}$ such that $(a,b):Y\to B\ZZ/2\times \PP^{\infty}(\CC)$ is an $(m-1)$-homotopy equivalence. Consequently, $a^*:H^*(\ZZ/2,\ZZ/2)\to H^*(Y,\ZZ/2)$ is injective in degree $\leq m-1$. By the K\"unneth formula, $(a^{s})^*:H^*((\ZZ/2)^{s},\ZZ/2)\to H^*(V,\ZZ/2)$ is also injective in degree $\leq m-1$.
Applying Lemma \ref{Sjnonzero} yields a class $\zeta\in H^{s}((\ZZ/2)^{s},\ZZ/2)$ such that $S_j\Sq^1(\zeta)\neq 0$.
Setting $\xi:=(a^{s})^*\zeta$, one has $S_j\Sq^1(\xi)=(a^{s})^*S_j\Sq^1(\zeta)\neq 0$ by our choice of $m$.
\end{proof}

Now comes the proof of Theorem \ref{mainthm}. The crucial case is the $c=1$ case, where one can use that torsion classes always have coniveau $\geq 1$. The statement for higher values of $c$ follows using product and blow-up constructions.

\begin{theorem}
\label{main}
For all $c\geq 1$ and $l\geq 2c+1$, there exists a smooth projective complex variety $X$ such that the inclusion $\widetilde N^cH^l(X,\ZZ)\subset N^cH^l(X,\ZZ)$ is strict. Moreover,
\begin{enumerate}[(i)]
\item one can choose $X$ with torsion canonical bundle.
\item if $c\ge 2$ and $l\ge 2c+1$, one can choose $X$ to be rational.
\end{enumerate}
\end{theorem}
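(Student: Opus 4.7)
\emph{Strategy and base case ($c=1$, $l\geq 3$).} The plan is to handle the base case $c=1$ directly via Proposition~\ref{obst2} applied to a torsion class on the Godeaux--Serre variety of Lemma~\ref{GodeauxSerre}, and then to bootstrap to all $c\geq 2$: for (i) by a product with an abelian variety, for (ii) by a blow-up of a projective space. Set $s=l-1$ and pick $j\geq 2$ with $l-2\leq j\leq s$ (possible since $l\geq 3$). Lemma~\ref{GodeauxSerre} furnishes a smooth projective variety $V$ with torsion canonical bundle and a class $\xi\in H^{s}(V,\ZZ/2)$ with $S_{j}\Sq^{1}(\xi)\neq 0$. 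Let $\alpha=\beta_{\ZZ}(\xi)\in H^{l}(V,\ZZ)$ be the integral Bockstein, a $2$-torsion class; by Proposition~\ref{torsionc}, $\alpha\in N^{1}H^{l}(V,\ZZ)$. Its mod-$2$ reduction is $\overline{\alpha}=\Sq^{1}(\xi)$, so $S_{j}(\overline{\alpha})\neq 0$, and since $l\leq 2+j$ and $j\geq 2$, Proposition~\ref{obst2} gives $\alpha\notin\widetilde{N}^{1}H^{l}(V,\ZZ)$. This settles the $c=1$ case (and in particular part (i) for $c=1$).

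\emph{Higher $c$, part (i).} Apply the base case to $l'=l-2c+2\geq 3$ to obtain $V$ (with torsion canonical bundle) and a class $\alpha_{V}\in H^{l'}(V,\ZZ)$ in $N^{1}\setminus\widetilde{N}^{1}$. Let $A$ be an abelian variety of dimension $c-1$, so that $X=V\times A$ has torsion canonical bundle. Take $\alpha=\alpha_{V}\boxtimes[\mathrm{pt}]\in H^{l}(X,\ZZ)$, where $[\mathrm{pt}]\in H^{2(c-1)}(A,\ZZ)$ is the class of a point. A support calculation gives $\alpha\in N^{c}H^{l}(X,\ZZ)$. For the strong coniveau, the projection formula yields $\pi_{V*}\alpha=\alpha_{V}\cdot\pi_{V*}\pi_{A}^{*}[\mathrm{pt}]=\alpha_{V}$; so if $\alpha=f_{*}\gamma$ with $f:Y\to X$ proper from a smooth variety of dimension $\leq\dim X-c=\dim V-1$, then $\alpha_{V}=(\pi_{V}\circ f)_{*}\gamma$ would have strong coniveau $\geq 1$, contradicting the base case.

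\emph{Higher $c$, part (ii).} Keep $V$ and $\alpha_{V}$ as above, embed $V\hookrightarrow\PP^{N}$ via a very ample linear system for some large $N$, and let $X=\Bl_{V}(\PP^{N})$, which is rational. Write $\iota:E\hookrightarrow X$ for the exceptional divisor, $p:E=\PP(N_{V/\PP^{N}})\to V$ for its projection, $r=N-\dim V$ for the codimension, and $h=c_{1}(\O_{E}(1))$ for the tautological class, and set $\tilde{\alpha}=\iota_{*}(p^{*}\alpha_{V}\cdot h^{c-2})\in H^{l}(X,\ZZ)$. Choosing the codim-$1$ support of $\alpha_{V}$ together with $c-2$ generic hyperplane sections cutting $h^{c-2}$ transversely in the fibres of $p$ gives $\tilde{\alpha}\in N^{c}H^{l}(X,\ZZ)$. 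The main obstacle is to prove $\tilde{\alpha}\notin\widetilde{N}^{c}$ by establishing $S_{j}(\overline{\tilde{\alpha}})\neq 0$. As $\overline{\alpha_{V}}$ and $h$ are mod-$2$ reductions of integral classes, $\Sq^{1}$ kills both, and Cartan's formula gives $\Sq^{1}(p^{*}\overline{\alpha_{V}}\cdot h^{c-2})=0$; Proposition~\ref{SjGysin} then yields $S_{j}(\overline{\tilde{\alpha}})=\iota_{*}S_{j}(p^{*}\overline{\alpha_{V}}\cdot h^{c-2})$. Iterating Cartan's formula along $S_{j}=\Sq^{2^{j}-1}\cdots\Sq^{3}$, and using $\Sq^{2m+1}(h^{k})=0$, $\Sq^{2m}(h^{k})=\binom{k}{m}h^{k+m}$, one obtains an expansion
\[
S_{j}(p^{*}\overline{\alpha_{V}}\cdot h^{c-2})=p^{*}S_{j}(\overline{\alpha_{V}})\cdot h^{c-2}+\sum_{m\geq 1}p^{*}\gamma_{m}\cdot h^{c-2+m}
\]
for certain classes $\gamma_{m}$ on $V$. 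Taking $N$ large enough that every $h$-power above stays strictly below $h^{r-1}$, the blow-up formula
\[
H^{*}(X,\ZZ/2)\cong H^{*}(\PP^{N},\ZZ/2)\oplus\bigoplus_{k=1}^{r-1}\iota_{*}\bigl(p^{*}H^{*-2k}(V,\ZZ/2)\cdot h^{k-1}\bigr)
\]
places $\iota_{*}(p^{*}\gamma_{m}\cdot h^{c-2+m})$ in the distinct direct summand indexed by $k=c-1+m$, so no cancellation is possible. The $k=c-1$ component of $S_{j}(\overline{\tilde{\alpha}})$ is identified under the blow-up isomorphism with $S_{j}(\overline{\alpha_{V}})=S_{j}\Sq^{1}(\xi_{V})$, nonzero by Lemma~\ref{GodeauxSerre}. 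Hence $S_{j}(\overline{\tilde{\alpha}})\neq 0$, and Proposition~\ref{obst2} (whose numerical conditions $l\leq 2c+j$, $j\geq 2$ are inherited from the base case) yields $\tilde{\alpha}\notin\widetilde{N}^{c}H^{l}(X,\ZZ)$.
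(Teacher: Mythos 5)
Your base case and part (i) are correct and essentially follow the paper's route (Godeaux--Serre approximation, Proposition~\ref{torsionc} for coniveau, Proposition~\ref{obst2} for strong coniveau); the only variation is that on the product you deduce the strong coniveau bound by projecting onto the first factor and invoking the $c=1$ case, where the paper recomputes $S_j$ on the product directly --- both work. Your part (ii) is a different construction from the paper's (which first forms the higher-coniveau product example, embeds it birationally in codimension $2$ by generic projection plus embedded resolution, and blows up along that, so that no $h$-powers are needed), and your $S_j$-nonvanishing computation via Cartan's formula and the blow-up decomposition for large $N$ is sound.

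However, there is a genuine gap in your verification that $\tilde{\alpha}=\iota_*(p^*\alpha_V\cdot h^{c-2})$ has coniveau $\geq c$. You propose to represent $h^{c-2}$ by $c-2$ ``generic hyperplane sections'' of $\O_E(1)$ in good position with respect to $p^{-1}(Z_V)$, where $Z_V$ is a codimension $\geq 1$ support of $\alpha_V$. But on the exceptional divisor one has $\O_E(1)\cong\O_X(-E)|_E$ and $p_*\O_E(1)\cong N^\vee_{V/\PP^N}$, which sits inside $\Omega^1_{\PP^N}|_V\subset\O_V(-1)^{\oplus(N+1)}$; hence $H^0(E,\O_E(1))=0$ whenever $\dim V>0$. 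The linear system is empty, so the ``generic members'' you intersect do not exist, and without them the only visible support of $\tilde{\alpha}$ is $\iota(p^{-1}(Z_V))$, of codimension $2$ in $X$ --- not enough once $c\geq 3$ (for $c=2$ your argument is fine since $h^0=1$). The gap is repairable: replace $h$ by $h_d:=h+d\,p^*c_1(\O_V(1))$ with $d$ even and large. Then $\O_E(1)\otimes p^*\O_V(d)$ is globally generated (because $I_{V/\PP^N}(d)$ is), so general members of its linear system meet $p^{-1}(Z_V)$ properly and $\iota_*(p^*\alpha_V\cdot h_d^{\,c-2})$ does lie in $N^cH^l(X,\ZZ)$ by the support argument you intended; and since $d$ is even the mod $2$ reduction is unchanged, so your Cartan/blow-up-formula computation of $S_j$ applies verbatim to this corrected class. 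Alternatively, one can sidestep the issue entirely as the paper does, by raising the coniveau before blowing up (product with a point class) and using only a codimension $2$ center, so that no power of $h$ ever appears.
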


\begin{proof}
Let $V$ and $\xi$ be as in Lemma \ref{GodeauxSerre} applied with $j=s=l-2c+1$.
%Remark : can also choose s=l-2c+1 and j=l-2c if l>=2c+2.
Let $T$ be a smooth projective complex variety of dimension $c-1$, which we choose to have torsion  canonical bundle if we want to ensure (i). Let $\lambda\in H^{2c-2}(T,\ZZ)$ be the class of a point $t\in T$. We define $X:= V\times T$ with projections $p:X\to V$ and $q:X\to T$, and we set $\alpha:=p^*\beta_{\ZZ}(\xi)\smile q^*\lambda$, where $\beta_{\ZZ}$ is the integral Bockstein.

Let $\overline{\alpha}$ and $\overline{\lambda}$ be the reductions modulo $2$ of $\alpha$ and $\lambda$. 
Since $\overline{\lambda}$ is killed by all positive degree elements of $\A$ for degree reasons and since the reduction modulo~$2$ of $\beta_{\ZZ}(\xi)$ is $\Sq^1(\xi)$, Cartan's formula (\ref{Cartan}) shows at once that 
$S_j(\overline{\alpha})=p^*S_j\Sq^1\xi\smile q^*\overline{\lambda}$.
This class being nonzero, Proposition \ref{obst2} implies that $\alpha$ has strong coniveau $<c$.

The class $\alpha$ is the pushforward of $\beta_{\ZZ}(\xi)$ by the codimension $c-1$ closed immersion $V\times \{t\}\to V\times T$. As $\beta_{\ZZ}(\xi)$ is torsion, it has coniveau $\geq 1$ by Proposition~\ref{torsionc}, and it follows that $\alpha$ has coniveau $\geq c$. This finishes the proof of (i).

For (ii),  we let $W$ be as in part (i), admitting a class in $\sigma \in N^{c-1}H^{l-2}(W,\ZZ)$ so that $S_{j}(\bar \sigma)\neq 0$ for $j=l-2c+1$. Let $n=\dim W$. Let $W\to \PP^{n+2}$ be the composition of a projective embedding of $W$ and a generic projection to $\PP^{n+2}$. Performing an embedded resolution of the image $W_0$ of $W$ in $\PP^{n+2}$, we find a smooth rational variety $Y$ of dimension $n+2$, which contains a smooth subvariety $\widetilde W$, which admits a birational morphism $\widetilde W\to W$. By construction, $\widetilde W$ then also carries a class $\gamma\in N^{c-1} H^{l-2}(\widetilde W,\ZZ)$ for which $S_j(\bar \gamma)\neq 0$. Now let $X$ be the blow-up of $Y$ along $\widetilde W$ with exceptional divisor $E$ and take the class $\alpha\in H^{l}(X,\ZZ)$ to be $i_*\pi^*\gamma$, where $i:E\to X$ is the inclusion, and $\pi: E\to \widetilde W$ is the projective bundle. Then by Lemma \ref{SjGysin}, $S_j(\bar{\alpha})=i_*\pi^*(S_j\bar{\gamma})\neq 0$, and we conclude that $\alpha$ has strong coniveau $<c$ by Proposition \ref{obst2}. On the other hand $\alpha$ is the pushforward of a coniveau $\geq c-1$ class from a codimension 1 closed immersion $i:E\to X$, hence it has coniveau $\ge c$. This completes the proof.
\end{proof}

\begin{remark}
One does not need to appeal to Proposition~\ref{torsionc}, hence to the Bloch--Kato conjecture, to prove that $\beta_{\ZZ}(\xi)$  has coniveau $\geq 1$ in the proof of Theorem~\ref{main}. Indeed, it follows from the construction of $\xi\in H^s(V,\ZZ/2)$ given in Lemmas \ref{Sjnonzero} and \ref{GodeauxSerre} that $\xi=x_1\cdots x_s$ for some $x_i\in H^1(V,\ZZ/2)$. Since $\beta_{\ZZ}(x_i)\in H^2(V,\ZZ)$ is $2$-torsion, the Lefschetz $(1,1)$ theorem shows that it is an algebraic class, hence has coniveau~$\geq 1$. It follows that $x_i$ lifts to an integral class in restriction to a dense open subset of $V$. Hence so does $\xi=x_1\cdots x_s$, showing that $\beta_{\ZZ}(\xi)$ has coniveau $\geq 1$.
\end{remark}

\begin{remark}
It was pointed out to us by Yagita that one can obtain $p$-torsion examples for all odd prime numbers $p$, using analogous arguments which we now briefly sketch. Let $\A(p)$ be the mod $p$ Steenrod algebra, and let $Q_1\in\A(p)$ be the element introduced by Milnor in \cite[\S 6]{milnorsteenrod}. There exists a $p$-torsion class in $H^3((\ZZ/p)^2,\ZZ)$ whose reduction modulo $p$ is not annihilated by $Q_1$ (for instance, the element $\beta_{\ZZ}(u_1u_2)$ in the notation of \cite[Proof of (6.7)]{AH2}). As in \cite[Proof of (6.6)]{AH2}, we deduce the existence of a smooth projective complex variety $X$ and of a $p$-torsion class $\alpha\in H^3(X,\ZZ)$ whose reduction modulo~$p$ is not killed by $Q_1$. The class $\alpha$ has coniveau~$\geq 1$ by Proposition~\ref{torsionc}. It cannot have strong coniveau $\geq 1$ because the cohomological operation $Q_1$ commutes with pushforwards by $\ci$-maps of oriented compact $\ci$-manifolds 
%in fact oriented \ci maps of \ci manifolds
(use \cite[Satz 2.12 and Satz 3.2]{AH} and the fact that the mod $p$ Bockstein commutes with such pushforwards),
%computing the mod p cohomology of MSO
yet vanishes on reductions modulo $p$ of degree~$1$ integral cohomology classes (see \cite[\S VI.1]{steenrod}).
\end{remark}

\subsection{Torsion-free examples} 

Since the cohomology of a finite group is torsion in positive degree, the examples of integral cohomology classes for which coniveau and strong coniveau differ that can be obtained using classifying spaces of finite groups live in cohomology groups that have torsion. To produce torsion-free examples, we resort to classifying spaces of linear algebraic groups, namely of the exceptional group $G_2$, as in \cite{pirutkayagita}.

\begin{proposition}
\label{G2}
There exists a smooth projective complex variety $X$ 
such that $H^4(X,\ZZ)$ is torsion-free and the inclusion $\widetilde N^1H^4(X,\ZZ)\subset N^1H^4(X,\ZZ)$ is strict.
\end{proposition}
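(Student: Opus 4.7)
The plan is to take $X$ to be a smooth projective approximation of the classifying space $BG_2$ of the exceptional group $G_2$, obtained via Ekedahl's variant of the Godeaux-Serre construction applied to an almost-free linear representation and then suitably compactified, chosen so that the natural map $X \to BG_2$ induces isomorphisms on integral and mod $2$ cohomology in degrees $\leq 7$. Then $H^4(X, \ZZ) \simeq H^4(BG_2, \ZZ) \simeq \ZZ$ is automatically torsion-free. I let $\alpha \in H^4(X, \ZZ)$ be a generator and aim to show $\alpha \in N^1 H^4(X,\ZZ) \setminus \widetilde{N}^1 H^4(X,\ZZ)$.

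For the coniveau step, the crucial input is that the standard $7$-dimensional representation $V_7$ of $G_2$ has Dynkin index $2$. Concretely, via the factorization $G_2 \hookrightarrow \mathrm{Spin}(7)$ and the fact that $p_1 = 2\lambda$ on $B\mathrm{Spin}(7)$ for the generator $\lambda$, one finds $c_2(V_7) = \pm 2\alpha$ in $H^4(BG_2, \ZZ)$, hence on $X$. Thus $2\alpha$ is the class of an algebraic codimension $2$ cycle, so $2\alpha \in N^2 H^4(X,\ZZ)$. The Bloch-Ogus characterization $N^1 H^4(X,\ZZ) = \ker(H^4(X,\ZZ) \to H^0(X, \mathcal{H}^4(\ZZ)))$ recalled in \S\ref{coniveautorsion}, combined with the torsion-freeness of $\mathcal{H}^4(\ZZ)$ from \cite[Th\'eor\`eme 3.1]{colliotvoisin}, forces $\alpha$, along with $2\alpha$, to vanish in $H^0(X,\mathcal{H}^4(\ZZ))$, yielding $\alpha \in N^1 H^4(X,\ZZ)$.

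For the failure of strong coniveau I invoke Proposition \ref{obst1} in the boundary case $c=1$, $l=4$: it suffices to exhibit an element of $\A \Sq^1 \A$ that does not annihilate $\bar{\alpha} \in H^4(X, \ZZ/2)$. Under the identification above, $\bar{\alpha}$ corresponds to the degree $4$ generator $y_4$ in the presentation $H^*(BG_2, \ZZ/2) \simeq \ZZ/2[y_4, y_6, y_7]$. The known Steenrod action on this ring gives $\Sq^1 y_4 = 0$ (since $y_4$ lifts integrally), $\Sq^2 y_4 = y_6$, and $\Sq^1 y_6 = y_7$; together with the Adem relation $\Sq^1 \Sq^2 = \Sq^3$, the Milnor primitive $Q_1 = \Sq^1 \Sq^2 + \Sq^2 \Sq^1$ satisfies
\[ Q_1 y_4 \,=\, \Sq^1 y_6 + 0 \,=\, y_7 \,\neq\, 0. \]
Since $Q_1$ plainly belongs to $\A \Sq^1 \A$, this shows that $\bar{\alpha}$ is not killed by $\A \Sq^1 \A$, and Proposition \ref{obst1} then gives $\alpha \notin \widetilde{N}^1 H^4(X,\ZZ)$.

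The main obstacle is assembling and properly citing the topological inputs: the Dynkin index computation $c_2(V_7) = \pm 2\alpha$ for the $7$-dimensional representation of $G_2$; the description of $H^*(BG_2, \ZZ/2)$ as a module over the Steenrod algebra in the relevant range; and the existence of a smooth projective approximation of $BG_2$ faithfully reflecting both integral and mod $2$ cohomology up to degree $7$. None of these is deep individually, but they must be coordinated with care --- in particular, one must check that the class $y_7 = Q_1 y_4$ genuinely survives to the chosen approximation.
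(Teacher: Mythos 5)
Your proposal is correct and follows essentially the same route as the paper: an Ekedahl-type smooth projective approximation of the classifying space, non-vanishing of $\Sq^3$ on the degree-$4$ generator of $H^*(BG_2,\ZZ/2)$ combined with Proposition \ref{obst1} (or \ref{obst2}) to rule out strong coniveau $\geq 1$, and algebraicity of a multiple of $\alpha$ plus torsion-freeness of $\mathcal{H}^4(\ZZ)$ to get coniveau $\geq 1$. The only deviations are minor: the paper works with $B(G_2\times\mathbb{G}_m)$, which is the setting of Ekedahl's projective approximation theorem (your compactification of an approximation of $BG_2$ itself should be routed through this), it cites Edidin--Graham for the algebraic multiple where you exhibit $c_2(V_7)$ explicitly, and since $\Sq^1 y_4=0$ your class $Q_1 y_4$ coincides with $\Sq^3 y_4$, the very class the paper uses.
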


\begin{proof}
It follows from the work of Borel (notably \cite[Proposition 19.2]{borel1}
%in this reference, the second occurence of $H^*(G,K_2) should be replaced with H^*(BG, K_2) (K_2 field of characteristic 2).
and \cite[Th\'eor\`eme 17.3 (c)]{borel2}) that $H^4(BG_2,\ZZ)$ is torsion-free and contains a class whose reduction modulo $2$ is not killed by $\Sq^3$ (see \cite[\S 2.4, Theorem 2.19]{akbulut} for a proof of this precise statement). The same property holds for the classifying space $B(G_2\times \mathbb{G}_m)=BG_2\times B\mathbb{G}_m=BG_2\times\PP^{\infty}(\CC)$. 

By Ekedahl's construction of algebraic approximations to classifying spaces of reductive groups \cite[Theorem 1.3]{ekedahl}, there exist a smooth projective complex variety $X$ and a map $a:X\to B(G_2\times \mathbb{G}_m)$ such that the pull-back morphism $a^*: H^*(B(G_2\times \mathbb{G}_m),\ZZ)\to H^*(X,\ZZ)$ is an isomorphism in degree $\leq 8$. It follows from the five lemma that $a^*: H^*(B(G_2\times \mathbb{G}_m),\ZZ/2)\to H^*(X,\ZZ/2)$ is an isomorphism in degree $\leq 7$. We deduce that $H^4(X,\ZZ)$ is torsion-free and that there exists a class $\alpha\in H^4(X,\ZZ)$ in the image of $a^*$ whose reduction modulo $2$ is not killed by $\Sq^3$.

The class $\alpha$ has strong coniveau $0$ by Proposition \ref{obst2} or by Proposition \ref{obst1}. Edidin and Graham \cite[Theorem 1 (c)]{edidingraham} 
%check published version
(see also \cite[Theorem 2.14]{totarobook}) have shown the surjectivity of the cycle class map $\CH^2(B(G_2\times \mathbb{G}_m))\otimes_{\ZZ}\QQ\to H^4(B(G_2\times \mathbb{G}_m),\QQ)$. It follows that a multiple of $\alpha$ is algebraic. As a consequence, $\alpha$ restricts to a torsion class on a dense open subset $U\subset X$, hence has coniveau $\geq 1$ by Proposition \ref{torsionc} applied to $U$. The proposition is now proven.
\end{proof}

\begin{remark}
The class $\alpha\in H^4(X,\ZZ)$ considered in the proof of Proposition \ref{G2} is Hodge as a multiple of it is algebraic, but it is not algebraic since it has strong coniveau~$0$. This counterexample to the integral Hodge conjecture in a torsion-free cohomology group is parallel to the counterexamples to the integral Tate conjecture described by Pirutka and Yagita \cite[Theorem 1.1]{pirutkayagita}.
%The construction of an algebraic approximation in loc. cit. seems however incomplete as it does not analyse correctly the stable locus. 
\end{remark}

\section{Low-dimensional examples}
\label{low}
\def\a{\alpha}
\def\b{\beta}

The examples of Section \ref{BG} are relatively simple and work for any coniveau $c\ge 1$ and any degree $l\geq 2c+1$. On the other hand, the resulting varieties have quite high dimension. We now construct examples of dimension as low as $4$ and show that their dimensions is the lowest possible that may be attained using purely topological arguments.

\subsection{Construction of the examples}

Our first goal is to prove Theorem \ref{mainthm2}.

\subsubsection{A special bielliptic surface}
\label{bielliptic}
Let $E_1=\CC/(\ZZ+\ZZ \tau)$ and $E_2=\CC/(\ZZ+\ZZ i)$ be two elliptic curves, the second having complex multiplication by $i$. The group $G=\ZZ/4$ acts freely on $E_1\times E_2$ by translation by a $4$-torsion point $(u\mapsto u+\frac14)$ on the first factor, and by multiplication by $i$, $(v\mapsto iv)$ on the second. Let $S=(E_1\times E_2)/G$ be the quotient.

The morphism $(u,v)\mapsto (u,(1+i)v)$ on $\CC\times \CC$ induces a morphism $f:S\to S$ which is
finite étale of degree 2. Let $\alpha \in H^1(S,\ZZ/2)$ be the corresponding class.
\begin{lemma}
\label{calculbielliptique}
There is a class $\beta\in H^1(S,\ZZ/2)$ such that $\a^3\b\neq 0$ and
$\b^2=0$.
\end{lemma}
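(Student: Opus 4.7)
The plan is to reduce the statement to the single assertion $\alpha^3 \ne 0 \in H^3(S, \ZZ/2)$, and then to establish this via the Leray spectral sequence of the bielliptic fibration $p_1 \colon S \to E_1' := E_1/G$.

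First, I compute $H_1(S, \ZZ)$ from the presentation of $\pi_1(S)$ as the extension $1 \to \pi_1(E_1\times E_2) \to \pi_1(S) \to G \to 1$ with generator $g$ of $G$ satisfying $g e_2 g^{-1} = f_2$, $g f_2 g^{-1} = e_2^{-1}$, and $g^4 = e_1$ (the last obtained by lifting $g$ four times to the universal cover $\CC^2$). Abelianizing yields $H_1(S, \ZZ) \cong \ZZ\langle g\rangle \oplus \ZZ\langle f_1\rangle \oplus (\ZZ/2)\langle e_2\rangle$, so $H^1(S, \ZZ/2) \cong (\ZZ/2)^3$ with basis $\gamma, \delta, \alpha$ dual to $g, f_1, e_2$. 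The classes $\gamma, \delta$ pull back from $H^1(E_1', \ZZ/2)$ along $p_1$, so $\gamma^2 = \delta^2 = (\gamma + \delta)^2 = 0$. The class $\alpha$ is identified with the cohomology class of the cover $f$ by computing the cokernel of $f_*$: the isogeny $v \mapsto (1+i)v$ sends $e_2 \mapsto e_2 + f_2 = 2 e_2 = 0$ while fixing $g$ and $f_1$, whence the cokernel is $(\ZZ/2) \langle [e_2]\rangle$.

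Second, the identity $\Sq^1 = \rho \beta_\ZZ$ (mod-$2$ reduction of the integral Bockstein) gives $\alpha^2 = \rho(t)$ where $t := \beta_\ZZ(\alpha) \in H^2(S, \ZZ)_{\mathrm{tors}} = \ZZ/2$ is nonzero (since $\alpha$ cannot lift to $H^1(S, \ZZ)$: any lift must vanish on the torsion class $e_2$). Hence $\alpha^2 \ne 0$, and $\alpha^4 = \rho(t \smile t) = 0$ since $t \smile t$ is a torsion element of the torsion-free group $H^4(S, \ZZ) = \ZZ$. Thus $\alpha^3 \smile \alpha = 0$, and by Poincaré duality the functional $\alpha^3 \smile (-) \colon H^1(S, \ZZ/2) \to H^4(S, \ZZ/2) = \ZZ/2$ is nonzero if and only if $\alpha^3 \ne 0$, in which case it vanishes on $\alpha$ and hence is nontrivial on some element of $\langle\gamma, \delta\rangle$, providing the required $\beta$.

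Third, I establish $\alpha^3 \ne 0$ using the Leray spectral sequence for $p_1$, which degenerates at $E_2$ by rank-counting against $h^*(S, \ZZ/2) = (1, 3, 4, 3, 1)$. The class $\alpha$ restricts on each fiber to $a_2 + b_2 \in H^1(E_2, \ZZ/2)$, so it represents a nonzero class in $E_\infty^{0,1}$; since $(a_2 + b_2)^2 = 0$ fiberwise, $\alpha^2 \in F^1 H^2$ descends to a class $\omega \in E_\infty^{1,1} = H^1(E_1', R)$ where $R := R^1 (p_1)_* \ZZ/2$ is the rank-$2$ local system with swap monodromy. A universal-coefficients comparison confirms $\omega \ne 0$: $\alpha^2$ lies in the $\Ext(H_1, \ZZ/2)$-subspace of $H^2(S, \ZZ/2)$, while $F^2 H^2 = \langle \gamma\delta \rangle$ sits in the complementary $\Hom(H_2, \ZZ/2)$-quotient. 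Examining the short exact sequence $0 \to R^\sigma \to R \to R/R^\sigma \to 0$ (whose extension class encodes the $G$-monodromy) shows further that $\omega$ projects to a nonzero class in $H^1(E_1', R/R^\sigma) = H^1(E_1', \ZZ/2)$. Cupping with $\alpha$ in the $E_\infty$-page then uses the fiberwise pairing $H^1(E_2, \ZZ/2)^{\otimes 2} \to H^2(E_2, \ZZ/2)$, under which $(a_2 + b_2) \smile \bar{a}_2 = a_2 b_2 \ne 0$, producing a nonzero class in $E_\infty^{1,2} = H^1(E_1', \ZZ/2)$ and hence $\alpha^3 \ne 0$. The main technical obstacle is the identification of $\omega$'s projection in $H^1(E_1', R/R^\sigma)$, a Kudo-transgression-type calculation reflecting the interaction between $\Sq^1$ and the monodromy of $R$.
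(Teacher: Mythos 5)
Your reduction steps are correct and, as far as they go, they follow a genuinely different route from the paper: you compute $H_1(S,\ZZ)\cong\ZZ^2\oplus\ZZ/2$ from the group extension, identify $\alpha$ with the class dual to $e_2$, show $\alpha^2=\rho(\beta_\ZZ\alpha)\neq0$ and $\alpha^4=0$, and then observe via Poincar\'e duality that the lemma reduces to the single inequality $\alpha^3\neq0$, with $\beta$ taken in $\langle\gamma,\delta\rangle$ (pullbacks from $E_1/G$, hence of square zero). The rank count giving $E_2$-degeneration of the Leray spectral sequence of $p_1$, the identification of $\alpha|_{E_2}=a_2+b_2$, and the universal-coefficient argument showing that the symbol $\omega$ of $\alpha^2$ in $E_\infty^{1,1}=H^1(E_1/G,R)$ is nonzero (because $\alpha^2$ kills integral $2$-cycles while $F^2H^2=\langle\gamma\delta\rangle$ does not, e.g.\ it is nonzero on the section coming from a fixed point of multiplication by $i$) are all fillable.

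However, the proof has a genuine gap exactly at its crux. Your own multiplicative analysis shows that the fiberwise pairing with $\bar\alpha=a_2+b_2$ annihilates precisely $R^\sigma$, so the symbol of $\alpha^3$ in $E_\infty^{1,2}$ is the image of $\omega$ under $H^1(E_1/G,R)\to H^1(E_1/G,R/R^\sigma)$. That map has a one-dimensional kernel, namely the image of $H^1(E_1/G,R^\sigma)$ (the long exact sequence of $0\to R^\sigma\to R\to R/R^\sigma\to0$ forces this, since $H^0(R^\sigma)\to H^0(R)$ is an isomorphism). Hence knowing $\omega\neq0$ does not decide whether its projection vanishes; if $\omega$ lay in that kernel, the symbol of $\alpha^3$ in $E_\infty^{1,2}$ would be zero and your argument would be inconclusive. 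The sentence ``examining the short exact sequence \dots shows further that $\omega$ projects to a nonzero class'' is therefore an assertion of precisely the statement to be proved (it is equivalent to $\alpha^3\neq0$), and you yourself flag its verification, a Kudo-transgression-type computation of how $\Sq^1$ interacts with the monodromy of $R$ inside the spectral sequence, as the unresolved ``main technical obstacle.'' No such computation is supplied, so the lemma is not established. For comparison, the paper avoids this secondary-operation issue entirely: it uses the diffeomorphism $S\simeq\SS^1\times M$ with $M=(\SS^1\times E_2)/(\ZZ/4)$, proves $\alpha^3\neq0$ in $H^3(M,\ZZ/2)$ by an explicit transversal intersection count with the immersed hypersurfaces $H,K$ and a deformation $C'$ of the curve $C=H\cap K$, and concludes by K\"unneth with the circle factor; some concrete input of this kind (or an honest spectral-sequence computation of the projection of $\omega$) is what your argument still needs.
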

\begin{proof}
There is a natural diffeomorphism $S\simeq\SS^1\times M$, where $M$ is the quotient of $\SS^1\times E_2$ by
the diagonal action of $\ZZ/4$, by translation by ${\frac14}$ on $\SS^1= \RR/\ZZ$,
and by multiplication by $i$ on $E_2$. Moreover, $\alpha$ is the pullback
by the second projection of the class (which we still denote by $\alpha \in H^1(M,\ZZ/2)$)
associated to the double cover $f:M\to M$ defined by
$(u,v)\mapsto(u,(1+i)v)$. Let $\b$ be the pullback to $S$ of the generator of
$H^1(\SS^1,\ZZ/2)$. It is clear that $\b^2=0$. To conclude, it suffices to show that
$\alpha^3\neq 0$ in $H^3(M,\ZZ/2)$; then also $\alpha^3\beta\neq 0$ by the K\"unneth theorem.

  Using the first projection, we may view $M$ as the total space of a fibration
$p:M\to \SS^1/G=\SS^1$ with fibers $E_2$ (and the monodromy on the fiber is
given by multiplication by $i$). We let $x$ and $y$ be the real coordinates on
the universal cover $\CC\simeq\RR^2$ of $E_2$. Let $H\subset M$ (resp. $K\subset M$) be the immersed $\ci$-hypersurface which intersects $p^{-1}(0)$ along $\{xy=0\}$ (resp.
$\{(x-1/2)(y-1/2)=0\}$) and is obtained by transporting the latter flatly in
all fibers of $p$ (note that $\{xy=0\}$ and $\{(x-1/2)(y-1/2)=0\}$) are invariant
by the monodromy). 

The immersed submanifolds $H$ and $K$ intersect transversally along a $1$-dimensional submanifold
$C\subset M$ which intersects $p^{-1}(0)$ along the two points $(0,1/2)$ and
$(1/2,0)$ and is obtained by transporting flatly these two points in all fibers of $p$ (it is a
circle in $M$ with degree 2 over the base of $p$). 

Let us introduce the following deformation $C'$ of $C$. Start with the point $(\epsilon ,1/2)$
in $p^{-1}(0)$ for some small $\epsilon>0$, and transport it flatly in the fibers of p. After going
twice around the base $\SS^1$ of $p$, one arrives at the point $(-\epsilon,1/2)$
of $p^{-1}(0)$, which can be connected by a very small arc to $(\epsilon,
1/2)$. The resulting loop $C'$ intersects $H$
transversally in one point. Letting $[H]$, $[K]$, $[C]$ and $[C']$ denote the mod $2$ cohomology classes of $H$, $K$, $C$ and $C'$ in $M$, we deduce that the intersection number $[C']\smile [H]=[C]\smile [H]=[K]\smile [H]^2$ is nonzero.

 The Leray spectral sequence for $p$ yields an exact sequence
\begin{equation}
\label{Leray}
0\to H^1(\SS^1,\ZZ/2)\to H^1(M,\ZZ/2)\to H^0(\SS^1,R^1p_*\ZZ/2)=\ZZ/2.
\end{equation}
 The classes $[H]$ and $[K]$ are nontrivial in restriction to the fibers of $p$, hence both project to the nonzero class in
$H^0(\SS^1,R^1p_*\ZZ/2)=\ZZ/2$. It follows from (\ref{Leray}) that we may write $[H]=[K]+p^*\omega$ for some $\omega\in H^1(\SS^1,\ZZ/2)$. We now compute
$[K]^3=[K]\smile([K]+p^*\omega)^2=[K]\smile [H]^2\neq 0$, since $\omega^2=0$.

We finally remark that $[K]=\a$. Indeed, the pullback of $K$ by the double
cover $f:M\to M$ is an immersed hypersurface in $M$ obtained by transporting flatly in the fibers of $p$ the boundary of the square with vertices $(1/2,0),
(0,1/2), (1/2,1), (1,1/2)$. It is clearly a boundary in $M$, as it bounds
the domain obtained by transporting flatly in the fibers of $p$ the
interior of the same square. Hence 
%$f:M\to M$ kills the class $[K]$ 
$f^*[K]$ vanishes
in $H^1(M,\ZZ/2)$. Since $[K]\neq 0$, this shows that $[K]=\a$. In particular, $\a^3=[K]^3\neq 0$.
\end{proof}

\subsubsection{A diagonal quotient construction}
\label{diagonalquotient}

Let $M$ be a connected $\ci$-manifold, and choose a nontrivial class $\varepsilon\in
H^1(M,\ZZ/2)$ with associated double cover $M'\to M$. We will consider the quotient $N$ of
$M'\times \SS^1$ by the diagonal action of $\ZZ/2$ (by the natural action on the left, by $-\Id$ on $\SS^1=\RR/\ZZ$ on the right).

   Using the first projection, we view $N$ as the total space of a
fibration $q:N\to M$ whose fibers are isomorphic to $\SS^1$. The two fixed
points of the action of $\ZZ/2$ on $\SS^1$ give rise to two sections of $q$ whose images are $\ci$-hypersurfaces of $N$ denoted by $D$ and~$D'$. Let $\delta:=[D]\in H^1(N,\ZZ/2)$ and $\delta':=[D']\in H^1(N,\ZZ/2)$ denote the cohomology classes of $D$ and $D'$. As $D$ and $D'$ do not meet, $\delta\smile \delta'=0$ in $H^2(N,\ZZ/2)$. On the one hand, $N\setminus (D\cup D')$ is connected (being the image of $M'\times (0,\frac12)\subset M'\times \RR/\ZZ$), so that $\delta+\delta'\neq 0$ in $H^1(N,\ZZ/2)$. On the other hand, the inverse image of $D\cup D'$
in $M'\times \SS^1$ is a boundary, showing that $\delta+\delta'$ is killed by the double cover $M'\times \SS^1\to N$.
It follows that $\delta+\delta'=q^*\varepsilon$ in $H^1(N,\ZZ/2)$, hence that
\begin{equation}
\label{cohoequation}
\delta^2=q^*\varepsilon \smile \delta
\end{equation}in $H^2(N,\ZZ/2)$.

\begin{lemma}
\label{cohoring}
The formula $\lambda+\mu\delta\mapsto q^*\lambda+q^*\mu\smile \delta$ induces a ring isomorphism 
$$H^*(M,\ZZ/2)[\delta]/(\delta^2-\varepsilon \delta)\isoto H^*(N,\ZZ/2).$$
\end{lemma}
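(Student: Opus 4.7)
The plan is to identify $q: N \to M$ as a locally trivial $\SS^1$-fiber bundle and then apply the Leray--Hirsch theorem, using $\delta$ as the one extra multiplicative generator.

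First I would verify that $q: N \to M$ is an $\SS^1$-bundle. On a sufficiently small open $U \subset M$ over which the double cover $M'\to M$ trivializes as $U \sqcup U$, the preimage $q^{-1}(U)$ is the quotient of $(U \sqcup U)\times \SS^1$ by the $\ZZ/2$-action that swaps the two copies of $U$ and negates the $\SS^1$ factor; choosing one copy as a system of representatives provides a diffeomorphism $q^{-1}(U) \simeq U\times \SS^1$ over $U$. Next, the hypersurface $D$ is the image of $M' \times \{0\}$ where $0\in \SS^1$ is one of the two fixed points of the involution; thus $D$ meets each fiber $\SS^1$ transversally in a single point. Consequently $\delta = [D]$ restricts on each fiber to the Poincar\'e dual of a point, which is the nonzero class in $H^1(\SS^1,\ZZ/2)$.

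Since $H^*(\SS^1,\ZZ/2)$ is free over $\ZZ/2$ of rank two with basis $\{1,\delta|_{\mathrm{fiber}}\}$, the Leray--Hirsch theorem then yields that $H^*(N,\ZZ/2)$ is a free $H^*(M,\ZZ/2)$-module with basis $\{1,\delta\}$ via the assignment $\lambda+\mu\delta \mapsto q^*\lambda + q^*\mu\smile \delta$. The relation $\delta^2=q^*\varepsilon\smile\delta$ has already been proven in (\ref{cohoequation}), so the quotient ring $H^*(M,\ZZ/2)[\delta]/(\delta^2-\varepsilon\delta)$ is also a free $H^*(M,\ZZ/2)$-module of rank two with basis $\{1,\delta\}$. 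Hence the displayed map is a well-defined ring homomorphism, and it is a module isomorphism by Leray--Hirsch, so it is a ring isomorphism.

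The only substantive ingredients are the identification of $q$ as an $\SS^1$-bundle and the computation of $\delta|_{\mathrm{fiber}}$; once these are in hand the conclusion is a purely formal application of Leray--Hirsch, so I do not anticipate any serious obstacle.
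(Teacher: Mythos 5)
Your proof is correct, and it takes a genuinely different route from the paper's. You identify $q\colon N\to M$ as a locally trivial $\SS^1$-bundle, check that $\delta=[D]$ restricts to the generator of $H^1(\SS^1,\ZZ/2)$ on each fiber (since $D$ is the image of a section), and invoke the Leray--Hirsch theorem to conclude that $H^*(N,\ZZ/2)$ is a free $H^*(M,\ZZ/2)$-module on $\{1,\delta\}$; combined with the relation $\delta^2=q^*\varepsilon\smile\delta$ from (\ref{cohoequation}), this gives the ring isomorphism. The paper instead works directly with the Gysin pushforward $q_*$: injectivity follows from applying $q_*$ and $q_*((-)\smile\delta)$ together with the projection formula, while surjectivity combines the projection formula with the Leray spectral sequence of $q$. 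Both arguments ultimately exploit the same geometric input (the $\SS^1$-bundle structure of $q$ and the fact that $D$ is a section), but yours packages it through the black box of Leray--Hirsch, which handles injectivity and surjectivity simultaneously, whereas the paper's computation with $q_*$ is more elementary and self-contained. Either proof is complete and correct.
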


\begin{proof}
This ring morphism is well-defined by (\ref{cohoequation}). 
To show that it is injective, choose $\lambda,\mu\in H^*(M,\ZZ/2)$ with  $q^*\lambda+q^*\mu\smile \delta=0$ and note that $\mu=q_*(q^*\lambda+q^*\mu\smile \delta)=0$ and
$\lambda+\mu\smile \varepsilon=q_*((q^*\lambda+q^*\mu\smile \delta)\smile \delta)=0$ by the projection formula. To show surjectivity, take $\alpha\in H^l(N,\ZZ/2)$. Then $q_*(\alpha-(q^*q_*\alpha)\smile\delta)=0$ by the projection formula, and the Leray spectral sequence for $q$ shows the existence of $\lambda\in H^l(M,\ZZ/2)$ such that 
$\alpha=q^*\lambda+(q^*q_*\alpha)\smile\delta$.
\end{proof}

\subsubsection{A fourfold}

Combining the constructions of \S\ref{bielliptic} and \S\ref{diagonalquotient}, we obtain a remarkable smooth projective fourfold.

\begin{proposition}
\label{fourfold}
There exist a smooth projective complex fourfold $Z$ and a $2$\nobreakdash-torsion class $\sigma\in H^3(Z,\ZZ)$ such that the reduction modulo $2$ of $\sigma^2$ is nonzero.
\end{proposition}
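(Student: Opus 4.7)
The plan is to exhibit $\sigma$ as the integral Bockstein of a suitable mod $2$ class. Specifically, take $\sigma = \beta_{\ZZ}(\tau)$ for some $\tau \in H^2(Z,\ZZ/2)$. Since the image of $\beta_{\ZZ}$ lies in the $2$-torsion of $H^3(Z,\ZZ)$, any such $\sigma$ is automatically $2$-torsion, with reduction modulo $2$ equal to $\bar\sigma = \Sq^1(\tau)$. The instability relation $\Sq^3 y = y^2$ for $y \in H^3(-,\ZZ/2)$ then gives
$$\bar\sigma^2 \;=\; (\Sq^1\tau)^2 \;=\; \Sq^3\Sq^1(\tau).$$
So the problem reduces to producing a smooth projective complex fourfold $Z$ and a class $\tau \in H^2(Z,\ZZ/2)$ with $\Sq^3\Sq^1(\tau)\neq 0$; this nonvanishing also forces $\Sq^1(\tau)\neq 0$, and hence $\sigma\neq 0$.

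To construct such a $Z$, I would combine the bielliptic surface $S$ of \S\ref{bielliptic} with an algebraic version of the diagonal quotient construction of \S\ref{diagonalquotient}, in which $\SS^1$ is replaced by an elliptic curve $E$ equipped with the involution $-\Id$. Let $\tilde S\to S$ be the étale double cover associated to $\alpha\in H^1(S,\ZZ/2)$. Since the deck involution of $\tilde S\to S$ is free, the diagonal $\ZZ/2$-action on $\tilde S\times E$ is free as well, and the smooth projective threefold $Y=(\tilde S\times E)/(\ZZ/2)$ fibers over $S$ with fibers $E$. Taking $Z=Y\times E'$ for an auxiliary elliptic curve $E'$ (or an iterated such diagonal quotient) produces a smooth projective complex fourfold.

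The natural cohomology classes on $Z$ are the pullbacks $q^*\alpha,q^*\beta\in H^1(Z,\ZZ/2)$ from $S$, together with a class $\delta\in H^2(Z,\ZZ/2)$ coming from a section of $Y\to S$ at a fixed point of $-\Id$ on $E$. An argument parallel to Lemma \ref{cohoring} yields the key relation $\delta^2 = q^*\alpha\smile\delta$ in $H^*(Y,\ZZ/2)$. Choosing $\tau$ as an appropriate polynomial in these classes and using Cartan's formula \eqref{Cartan}, the relations $\beta^2=0$ and $\delta^2=q^*\alpha\smile\delta$, and the fundamental nonvanishing $\alpha^3\beta\neq 0$ of Lemma \ref{calculbielliptique}, one computes $\Sq^1(\tau)$ and then $\Sq^3\Sq^1(\tau)$ explicitly.

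The main obstacle will be the cohomological bookkeeping. Because $\alpha$ and $\beta$ are pulled back from a real three-manifold (via the diffeomorphism $S\simeq\SS^1\times M$), several natural products such as $\alpha^4$ and $\beta^2$ vanish, so many naive choices of $\tau$ already satisfy $\Sq^3\Sq^1(\tau)=0$. The role of $\delta$ is precisely to break these degenerations via $\delta^2=q^*\alpha\smile\delta$, and a careful choice of $\tau$ mixing $\alpha$, $\beta$ and $\delta$ should produce a $\Sq^3\Sq^1(\tau)$ whose expansion contains a nonzero multiple of the top-degree class $\alpha^3\beta\in H^4(S,\ZZ/2)$ times a top class from the complementary factor, hence is nonzero in $H^6(Z,\ZZ/2)$.
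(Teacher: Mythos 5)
Your opening reduction is fine and matches the paper's strategy: every $2$-torsion class in $H^3$ is $\beta_{\ZZ}(\tau)$ for some $\tau\in H^2(Z,\ZZ/2)$, its reduction is $\Sq^1(\tau)$, and by instability $\bar\sigma^2=\Sq^3\Sq^1(\tau)$, so the problem is to produce a fourfold with $\Sq^3\Sq^1(\tau)\neq 0$. The gap is in the second half: you never actually choose $\tau$ or carry out the computation, and the construction you describe cannot support any such $\tau$. If $Z=Y\times E'$ with $Y=(\tilde S\times E)/(\ZZ/2)$, then since $\Sq^1$ vanishes identically on $H^*(E',\ZZ/2)$ and all positive-degree classes of $E'$ square to zero, the K\"unneth decomposition gives $(\Sq^1\tau)^2=(\Sq^1\tau_2)^2\otimes 1$ where $\tau_2\in H^2(Y,\ZZ/2)$ is the component in $H^2(Y)\otimes H^0(E')$. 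But on the closed almost complex $6$-manifold $Y$ one has $y^2=\Sq^3y=v_3\smile y=0$ for every $y\in H^3(Y,\ZZ/2)$, because the third Wu class satisfies $v_3=w_3+\Sq^1w_2=0$ (odd Stiefel--Whitney classes of a complex manifold vanish, and $w_2$ is the reduction of $c_1$, so $\Sq^1w_2=0$). Hence no "careful choice of $\tau$" exists on $Y\times E'$, nor on any product of a complex threefold with a curve; the alternative you mention, an "iterated diagonal quotient", is the right idea but is left unspecified at exactly the point where the argument lives.

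What is missing is that the second diagonal quotient must be taken with respect to $\beta$, not $\alpha$: the paper forms the double cover $Y'\to Y$ associated with $\pi^*\beta$ and sets $Z=(Y'\times E)/(\ZZ/2)$, so that (applying Lemma \ref{cohoring} four times, after splitting each elliptic curve factor as $\SS^1\times\SS^1$) one gets degree-one classes $\delta,\delta'$ with $\delta^2=\alpha\delta$, $\delta'^2=\alpha\delta'$ \emph{and} degree-one classes $\gamma,\gamma'$ with $\gamma^2=\beta\gamma$, $\gamma'^2=\beta\gamma'$. The choice $\tau=\gamma\delta$ then gives $(\Sq^1(\gamma\delta))^2=\gamma^4\delta^2+\gamma^2\delta^4=\alpha\beta^3\gamma\delta+\alpha^3\beta\gamma\delta=\alpha^3\beta\gamma\delta\neq 0$, using $\beta^2=0$ and $\alpha^3\beta\neq 0$ from Lemma \ref{calculbielliptique}; the nonvanishing term arises precisely from the interplay of one generator attached to $\alpha$ and one attached to $\beta$. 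Your list of available classes ($q^*\alpha$, $q^*\beta$ and a section class attached only to $\alpha$) contains no class attached to $\beta$, so this mixing cannot occur in your setup. A minor but symptomatic point: with your $\delta\in H^2(Z,\ZZ/2)$ the claimed relation $\delta^2=q^*\alpha\smile\delta$ is impossible for degree reasons; the codimension-two section class is $\delta\delta'$ in the notation above and satisfies $(\delta\delta')^2=\alpha^2\delta\delta'$, which is not strong enough to produce $\alpha^3\beta$.
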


\begin{proof}
Let $E$ be an elliptic curve, and let $S$ and $\alpha,\beta\in H^1(S,\ZZ/2)$ be as in \S\ref{bielliptic}. Consider the double cover $S'\to S$ associated with $\alpha$, and let $Y$ be the smooth projective complex threefold obtained as the quotient of $S'\times E$ by the diagonal action of $\ZZ/2$ (by the natural action on the left, by $-\Id$ on $E\simeq(\SS^1)^2\simeq(\RR/\ZZ)^2$ on the right). 

Let $\pi:Y\to S$ be the morphism induced by the first projection, and let $Y'\to Y$ be the double cover associated with $\pi^*\beta$. We define $Z$ to be the smooth projective complex fourfold obtained as the quotient of $Y'\times E$ by the diagonal action of $\ZZ/2$ (by the natural action on the left, by $-\Id$ on $E\simeq(\SS^1)^2\simeq(\RR/\ZZ)^2$ on the right).

The variety $Z$ may be constructed from $S$ by applying four times the construction of \S\ref{diagonalquotient}. As a consequence, its cohomology ring with $\ZZ/2$ coefficients may be computed by four successive applications of Lemma \ref{cohoring}:
$$H^*(Z,\ZZ/2)=H^*(S,\ZZ/2)[\delta,\delta',\gamma,\gamma']/(\delta^2-\alpha\delta, \delta'^2-\alpha\delta', \gamma^2-\beta\gamma,\gamma'^2-\beta'\gamma').$$

Define $\sigma:=\beta_{\ZZ}(\gamma\delta)\in H^3(Z,\ZZ)$, where $\beta_{\ZZ}$ is the integral Bockstein. It is a $2$-torsion class.
Then the reduction of  $\sigma^2$ modulo 2 is equal to
$$(\Sq^1(\gamma\delta))^2=(\gamma^2\delta+\gamma\delta^2)^2=\gamma^4\delta^2+\gamma^2\delta^4=\gamma \beta^3\delta^2+\gamma\beta\delta\alpha^3=\alpha^3\beta\gamma\delta\neq 0,$$
where we used that $\beta^2=0$ and that $\alpha^3\beta\neq 0$ (see Lemma \ref{calculbielliptique}).
\end{proof}

We may now prove Theorem \ref{mainthm2}.

\begin{theorem}
\label{main2}
For $l\in\{3,4\}$, there is a smooth projective complex variety $X$ of dimension $l+1$
with torsion canonical bundle
such that the inclusion $\widetilde N^1H^l(X,\ZZ)\subset N^1H^l(X,\ZZ)$ is strict. 
\end{theorem}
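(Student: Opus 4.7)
I would deduce both cases from Proposition~\ref{fourfold}, which provides a smooth projective fourfold $Z$ with torsion canonical bundle, together with a $2$-torsion class $\sigma\in H^3(Z,\ZZ)$ whose mod-$2$ reduction satisfies $\bar\sigma^2\neq 0$ in $H^6(Z,\ZZ/2)$. In each case I will exhibit a $2$-torsion class $\alpha\in H^l(X,\ZZ)$, which automatically has coniveau $\geq 1$ by Proposition~\ref{torsionc}, and then check that $\bar\alpha$ is not annihilated by the two-sided ideal $\A\Sq^1\A$; Proposition~\ref{obst1} applied with $c=1$ (which is available since $l\leq 2c+2=4$) will then force $\alpha$ to have strong coniveau $<1$. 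The workhorse is $\Sq^3=\Sq^1\Sq^2$, which lies in $\A\Sq^1\A$ by the relevant Adem relation and which computes the cup square on a degree-$3$ class.

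For $l=3$ the natural choice is $X:=Z$ and $\alpha:=\sigma$; then $\Sq^3\bar\alpha=\bar\sigma^2\neq 0$ by Proposition~\ref{fourfold}. For $l=4$ I would take $X:=Z\times E$ for an elliptic curve $E$, and set $\alpha:=p_1^*\sigma\smile p_2^*\mu$ for a generator $\mu\in H^1(E,\ZZ)$. Then $X$ is a smooth projective fivefold whose canonical bundle remains torsion (as $K_E$ is trivial), and $\alpha$ is $2$-torsion. Cartan's formula, combined with the vanishing $\Sq^j\bar\mu=0$ for $j\geq 1$ (using $\deg\bar\mu=1$ and $\bar\mu^2=0$, the latter coming from $\mu^2=0$ in the torsion-free group $H^2(E,\ZZ)$ by graded commutativity), yields
$$\Sq^3\bar\alpha=\Sq^3\bar\sigma\smile\bar\mu=\bar\sigma^2\smile\bar\mu,$$
which is nonzero by the Künneth formula.

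The main difficulty has already been handled in the construction of $Z$ in Proposition~\ref{fourfold} and in the verification that $\bar\sigma^2\neq 0$. What remains is essentially bookkeeping: recording that $\Sq^3\in\A\Sq^1\A$, that $\Sq^3$ acts as the cup square on degree-$3$ classes, and carrying out the Cartan-formula computation above. The restriction $l\leq 4$ matches precisely the range in which Proposition~\ref{obst1} is available for $c=1$, which also explains why this purely topological technique cannot be pushed to larger values of $l$ for strong coniveau $\geq 1$.
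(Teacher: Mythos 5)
Your proposal is correct and follows essentially the same route as the paper: take $X=Z$, $\alpha=\sigma$ for $l=3$ and $X=Z\times E$, $\alpha=p_1^*\sigma\smile p_2^*\tau$ for $l=4$, use Proposition~\ref{torsionc} for coniveau $\geq 1$, and detect strong coniveau $0$ via $\Sq^3(\overline\alpha)=\overline\sigma^2\neq 0$ (resp.\ $\overline\sigma^2\smile\overline\tau\neq 0$ by Cartan's formula). The paper invokes Proposition~\ref{obst2} with $j=2$ or Proposition~\ref{obst1}; your use of Proposition~\ref{obst1} via $\Sq^3=\Sq^1\Sq^2\in\A\Sq^1\A$ is exactly the second of these options.
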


\begin{proof}
Let $Z$ and $\sigma$ be as in Proposition \ref{fourfold}. If $l=3$, we define $X:=Z$ and $\alpha:=\sigma$. If $l=4$, we choose an elliptic curve $E$ and a class $\tau\in H^1(E,\ZZ)$ whose reduction modulo $2$ is nonzero, and we define $X:=Z\times E$ and $\alpha:=p_1^*\sigma\smile p_2^*\tau\in H^4(X,\ZZ)$.

In both cases, $\alpha$ is $2$-torsion, hence has coniveau $\geq 1$ by Proposition~\ref{torsionc}.

Let $\overline{\alpha}$, $\overline{\sigma}$ and $\overline{\tau}$ denote the reductions modulo $2$ of $\alpha$, $\sigma$ and $\tau$. If $l=3$, then $\Sq^3(\overline{\alpha})=\overline{\sigma}^2\neq 0$. If $l=4$, then $\Sq^3(\overline{\alpha})=p_1^*\Sq^3(\overline{\sigma})\smile p_2^*\overline{\tau}=p_1^*\overline{\sigma}^2\smile p_2^*\overline{\tau}\neq 0$ by Cartan's formula (\ref{Cartan}) since $\Sq^1(\overline{\sigma})=\Sq^1(\overline{\tau})=\Sq^3(\overline{\tau})=0$.
In both cases, Proposition \ref{obst2} applied with $j=2$ or Proposition \ref{obst1} show that $\alpha$ has strong coniveau~$0$.
\end{proof}

\begin{remark}
For $X$ as in the above theorem, any class in $H_k(X,\ZZ)$ is realizable as the class of a real submanifold of $X$ (see \cite[Corollaire II.28]{thom}). Thus the obstructions we use are really of `complex' nature. 
\end{remark}

\subsection{Optimality}
\label{optimality}

In this section, we prove Theorem \ref{main3}, thus showing that the examples of Theorem~\ref{main2} are optimal in the following sense: their dimensions are the lowest possible for which there are topological obstructions to the equality of coniveau and strong coniveau in cohomological degree $3$ and $4$ (see Remark \ref{rkconiveaus}).

\subsubsection{A vanishing result}
The following proposition will be used crucially in the proof of Theorem \ref{main3}.

\begin{proposition}
\label{vanishing}
Let $X$ be a compact complex fourfold and $\alpha\in H^4(X,\ZZ)$.
If $\overline{\alpha}$ denotes the reduction modulo $2$ of $\alpha$ and $\beta_{\ZZ}$ is the integral Bockstein, then
$$\beta_{\ZZ}\Sq^2(\overline{\alpha})=0$$ in $H^7(X,\ZZ).$
\end{proposition}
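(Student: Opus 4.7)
My plan is to prove that $\tau := \beta_{\ZZ}\Sq^2\overline{\alpha} \in H^7(X,\ZZ)$, which is automatically $2$-torsion since it lies in the image of the integral Bockstein, vanishes by showing that it pairs trivially, via the linking form of the compact oriented $8$-manifold $X$, against every torsion class $\sigma \in H^2(X,\ZZ)$. Non-degeneracy of the linking form on torsion subgroups of Poincar\'e-dual degrees will then force $\tau = 0$.

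The key step will be an identity in $H^8(X,\ZZ/2)$ exhibiting $\Sq^2\overline{\alpha}\smile\bar\sigma$ as the mod $2$ reduction of an explicit integral class. Because $\overline{\alpha}$ lifts to $\alpha \in H^4(X,\ZZ)$, one has $\Sq^1\overline{\alpha} = \rho\beta_{\ZZ}\overline{\alpha} = 0$. For $\sigma \in H^2(X,\ZZ)$ with reduction $\bar\sigma := \rho(\sigma)$, Cartan's formula together with $\Sq^2\bar\sigma = \bar\sigma^2$ will yield
$$\Sq^2(\overline{\alpha}\smile\bar\sigma) = \Sq^2\overline{\alpha}\smile\bar\sigma + \overline{\alpha}\smile\bar\sigma^2.$$
Since $X$ is a complex manifold, its Wu classes satisfy $v_1 = w_1(TX) = 0$ and hence $v_2 = w_2(TX) = \rho(c_1(TX))$. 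Applying Wu's formula to $\overline{\alpha}\smile\bar\sigma \in H^{\dim_\RR X - 2}(X,\ZZ/2)$ gives
$$\Sq^2(\overline{\alpha}\smile\bar\sigma) = w_2\smile\overline{\alpha}\smile\bar\sigma \quad \text{in } H^8(X,\ZZ/2),$$
so combining,
$$\Sq^2\overline{\alpha}\smile\bar\sigma = \overline{\alpha}\smile\bar\sigma\smile(w_2+\bar\sigma) = \rho\bigl(\alpha\smile\sigma\smile(c_1(TX)+\sigma)\bigr).$$
If $\sigma$ is torsion, then so is the integral class $\alpha\smile\sigma\smile(c_1(TX)+\sigma) \in H^8(X,\ZZ)$; but $H^8(X,\ZZ)$ is torsion-free, so it vanishes, whence $\Sq^2\overline{\alpha}\smile\bar\sigma = 0$ in $H^8(X,\ZZ/2)$ for every torsion $\sigma$.

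To conclude, I will invoke the standard Bockstein description of the linking form: viewing $\Sq^2\overline{\alpha}$ as a class $\xi \in H^6(X,\QQ/\ZZ)$ via $\ZZ/2 \hookrightarrow \QQ/\ZZ$, so that $\tau$ is the Bockstein of $\xi$, one has
$$L(\tau,\sigma) = \langle\xi\smile\sigma,[X]\rangle = \tfrac{1}{2}\langle\Sq^2\overline{\alpha}\smile\bar\sigma,[X]\rangle \in \tfrac{1}{2}\ZZ/\ZZ \subset \QQ/\ZZ,$$
which vanishes by the above for every torsion $\sigma \in H^2(X,\ZZ)$, forcing $\tau = 0$. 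The main subtle point will be the clean setup of the linking pairing and its Bockstein formula; once that is in place, the rest is a short Cartan--Wu manipulation using only that $X$ is a compact complex manifold (so $w_1=0$) and that $H^8(X,\ZZ)$ is torsion-free.
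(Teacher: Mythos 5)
Your proposal is correct and is essentially the paper's own argument: the identical Cartan--Wu computation (using $\Sq^1(\overline{\alpha})=0$, $v_1=0$, $v_2=w_2=\rho(c_1(T_X))$, and the torsion-freeness of $H^8(X,\ZZ)$) shows that $\Sq^2(\overline{\alpha})\smile\rho(\sigma)=0$ in $H^8(X,\ZZ/2)$ for every torsion class $\sigma\in H^2(X,\ZZ)$. The only cosmetic difference lies in the final duality step: you invoke the standard non-degenerate torsion linking form $\operatorname{Tors}H^7(X,\ZZ)\times\operatorname{Tors}H^2(X,\ZZ)\to\QQ/\ZZ$ with its Bockstein description, whereas the paper pairs $\beta_{\ZZ}\Sq^2(\overline{\alpha})$ against all of $H^1(X,\QQ/\ZZ)$ (moving the Bockstein across the cup product via \cite[Lemma 2.6]{linking}) and concludes with the $\QQ/\ZZ$-coefficient Poincar\'e duality of Proposition \ref{Poincare} --- equivalent formulations of the same pairing argument.
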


\begin{proof}
Let $\iota:\ZZ/2\to \QQ/\ZZ$ be the natural injection, and let $\partial$ denote the boundary maps associated with the short exact sequence $0\to\ZZ\to\QQ\to\QQ/\ZZ\to 0$. In view of the commutative exact diagram
\begin{equation*}
\begin{aligned}
\xymatrix
@R=0.3cm
{
0\ar[r]&\ZZ\ar^{2}[r]\ar^{\wr}[d]&
\ZZ\ar^{}[d] \ar[r]& \ZZ/2\ar[r]\ar^{\iota}[d]&0 \\
0\ar[r]&\ZZ\ar[r]&\QQ \ar[r]& \QQ/\ZZ\ar[r]&0
}
\end{aligned}
\end{equation*}
one has $\beta_{\ZZ}\Sq^2(\overline{\alpha})=\partial( \iota_*\Sq^2(\overline{\alpha}))$ in $H^7(X,\ZZ)$. It follows that
$$\beta_{\ZZ}\Sq^2(\overline{\alpha})\smile\beta=\partial( \iota_*\Sq^2(\overline{\alpha}))\smile\beta=\iota_*\Sq^2(\overline{\alpha})\smile\partial(\beta) \mbox{ in }H^8(X,\QQ/\ZZ)$$
for all $\beta\in H^1(X,\QQ/\ZZ)$, where the last equality follows from \cite[Lemma 2.6]{linking}.

Defining $\gamma$ to be the reduction modulo $2$ of $\partial(\beta)$,
we deduce that
\begin{equation}
\label{eq1}
\beta_{\ZZ}\Sq^2(\overline{\alpha})\smile\beta=\iota_*(\Sq^2(\overline{\alpha})\smile\gamma)\,\, \mbox{  in }H^8(X,\QQ/\ZZ).
\end{equation}Cartan's formula (\ref{Cartan}) and the vanishing of $\Sq^1(\overline{\alpha})$ imply that
\begin{equation}
\label{eq2}
\Sq^2(\overline{\alpha})\smile\gamma=\Sq^2(\overline{\alpha}\smile\gamma)+\overline{\alpha}\smile\gamma^2
\end{equation}in $H^8(X,\ZZ/2)$. Finally, letting $u_2(X)\in H^2(X,\ZZ/2)$ denote the second Wu class of $X$ defined in \cite[\S 11 p.~131-132]{MS}, we have
\begin{equation}
\label{eq3}
\Sq^2(\overline{\alpha}\smile\gamma)=\overline{\alpha}\smile\gamma\smile u_2(X)
\end{equation}in $H^8(X,\ZZ/2)$. Notice that the classes $\overline{\alpha}$, $\gamma$ and $u_2(X)$ are the reductions modulo $2$ of the integral cohomology classes $\alpha$, $\partial(\beta)$ and $c_1(X)$ (for the latter assertion, combine Wu's theorem \cite[Theorem 11.4]{MS} and \cite[Problem 14-B]{MS}). Since $\partial(\beta)$ is torsion and $H^8(X,\ZZ)=\ZZ$ has no torsion, we deduce that
$\overline{\alpha}\smile\gamma^2=\overline{\alpha}\smile\gamma\smile u_2(X)=0$. Combining equations (\ref{eq1}), (\ref{eq2}) and (\ref{eq3}) now shows that $\beta_{\ZZ}\Sq^2(\overline{\alpha})\smile\beta=0$. Since $\beta$ was arbitrary, Poincar\'e duality (see Proposition \ref{Poincare} below), implies the required vanishing $\beta_{\ZZ}\Sq^2(\overline{\alpha})=0$.
\end{proof}

For lack of an explicit reference to the literature, we include a proof of the following instance of Poincar\'e duality.

\begin{proposition}
\label{Poincare}
If $M$ is a compact oriented $\ci$-manifold of dimension $d$, the cup product pairings 
\begin{equation*}
\begin{alignedat}{4}
H^k(M,\ZZ/n)\times H^{d-k}(M,\ZZ/n)&\to H^d(M,\ZZ/n)=\ZZ/n\hspace{2em}\\
\textrm{and}\hspace{1em} H^k(M,\ZZ)\times H^{d-k}(M,\QQ/\ZZ)&\to H^d(M,\QQ/\ZZ)=\QQ/\ZZ\hspace{2em}
\end{alignedat}
\end{equation*}
are non-degenerate on both sides for all $k\geq 0$ and $n\geq 1$.
\end{proposition}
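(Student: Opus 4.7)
The plan is to combine Poincaré duality with the universal coefficient theorem and to conclude via Pontryagin (resp.\ Matlis) duality for finitely generated abelian groups (resp.\ $\ZZ/n$-modules).

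First, for every abelian group $A$ and every $j\geq 0$, cap product with the fundamental class $[M]\in H_d(M,\ZZ)$ induces the Poincaré duality isomorphism $D_A:H^j(M,A)\isoto H_{d-j}(M,A)$ on the compact oriented manifold $M$. The adjunction $\langle \alpha\smile\beta,[M]\rangle=\langle\alpha,\beta\cap[M]\rangle$, applied together with $D_A$ to the first factor of each cup-product pairing, identifies the displayed pairings with canonical Kronecker evaluation pairings of the form
$$H_{d-k}(M,A)\times H^{d-k}(M,A')\to A\otimes A',$$
for $(A,A')=(\ZZ/n,\ZZ/n)$ and $(\ZZ,\QQ/\ZZ)$ respectively. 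Non-degeneracy on both sides of the original pairing is equivalent to non-degeneracy on both sides of the corresponding Kronecker pairing.

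Next, the universal coefficient theorem identifies the remaining cohomology with a $\Hom$ group. Since $\ZZ/n$ is self-injective as a ring (it is quasi-Frobenius) and $\QQ/\ZZ$ is an injective $\ZZ$-module, the relevant $\Ext^1$ terms vanish and one obtains natural isomorphisms
$$H^{d-k}(M,\ZZ/n)\cong\Hom_{\ZZ/n}(H_{d-k}(M,\ZZ/n),\ZZ/n),\qquad H^{d-k}(M,\QQ/\ZZ)\cong\Hom(H_{d-k}(M,\ZZ),\QQ/\ZZ),$$
under which the Kronecker pairing corresponds to the canonical evaluation pairing between a finitely generated module and its dual.

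Finally, since $M$ is compact, the groups $H_{d-k}(M,\ZZ)$ and $H_{d-k}(M,\ZZ/n)$ are finitely generated over $\ZZ$ and $\ZZ/n$ respectively. The non-degeneracy on both sides of the canonical evaluation pairing is then Pontryagin duality for finitely generated abelian groups in the $\QQ/\ZZ$ case, and Matlis duality for finitely generated modules over the quasi-Frobenius ring $\ZZ/n$ in the $\ZZ/n$ case. The only mild subtlety is that by applying $D_A$ to a single factor one simultaneously obtains non-degeneracy on both sides of the original pairing, avoiding any double-dualisation at the level of cohomology.
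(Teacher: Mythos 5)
Your argument is correct, and it is worth comparing it with the paper's. For the first pairing (coefficients $\ZZ/n$) you do exactly what the paper does: compose Poincar\'e duality with the universal coefficient map and use that $\ZZ/n$ is injective over itself (the paper phrases this as running the proof of Hatcher's Proposition 3.38 with $R=\ZZ/n$, the map $h$ there being an isomorphism for the same reason). For the second pairing your route is genuinely different. The paper never invokes the universal coefficient theorem with $\QQ/\ZZ$ coefficients: it bootstraps from the finite-coefficient statement, proving left non-degeneracy by choosing $n$ with $\alpha\notin nH^k(M,\ZZ)$ and pairing modulo $n$, and right non-degeneracy by writing $\beta$ as the image of some $\beta_n$ with $\ZZ/n$ coefficients, using Tychonoff's theorem to pick classes $\alpha_{mn}\in H^k(M,\ZZ/mn)$ pairing nontrivially with the induced $\beta_{mn}$ compatibly for all $m$, and then lifting $\alpha_n$ to an integral class via finite generation of $H^{k+1}(M,\ZZ)$. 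You instead dualize the integral factor and identify $H^{d-k}(M,\QQ/\ZZ)$ with $\Hom(H_{d-k}(M,\ZZ),\QQ/\ZZ)$ directly, using that $\QQ/\ZZ$ is injective (so the $\Ext$ term vanishes) and an injective cogenerator (to separate points of $H_{d-k}(M,\ZZ)$); this handles both sides at once and eliminates the compactness and lifting arguments, at the modest cost of relying on naturality of the universal coefficient sequence and its compatibility with the Kronecker pairing for divisible coefficients. Two small points of hygiene: the adjunction you quote, $\langle\alpha\smile\beta,[M]\rangle=\langle\alpha,\beta\cap[M]\rangle$, caps with the \emph{second} factor, so to dualize the first factor you should either cap with $\alpha$ or invoke graded commutativity of the cup product (only a sign, so harmless for non-degeneracy); and finite generation of the homology groups is not actually needed for non-degeneracy on the module side, since $\QQ/\ZZ$ and $\ZZ/n$ are injective cogenerators over $\ZZ$ and $\ZZ/n$ respectively.
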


\begin{proof}
To prove the assertion with $\ZZ/n$ coefficients, run the proof of \cite[Proposition 3.38]{Hatcher} with $R=\ZZ/n$, noting that the morphism $h$ in \emph{loc.~cit.} is an isomorphism by the universal coefficient theorem \cite[Theorem 3.2]{Hatcher} and since $\ZZ/n$ is an injective $\ZZ/n$-module (this argument appears in \cite[\S 3.2.6]{SGA43}).

To prove that the second pairing is non-degenerate on the left, take a nonzero $\alpha\in H^k(M,\ZZ)$. Since $H^k(M,\ZZ)$ is finitely generated, there exists $n\geq 1$ such that $\alpha$ is not divisible by $n$, hence such that its image $\overline{\alpha}$ in $H^k(M,\ZZ/n)$ does not vanish. By the assertion with $\ZZ/n$ coefficients, we may find $\beta\in H^{d-k}(M,\ZZ/n)$ with $\overline{\alpha}\smile\beta\neq 0$. The cup product of $\alpha$ with the image of $\beta$ in $H^{d-k}(M,\QQ/\ZZ)$ is then nonzero.

To prove that the second pairing is non-degenerate on the right, take a nonzero class $\beta\in H^{d-k}(M,\QQ/\ZZ)$. It is the image of a class $\beta_{n}\in H^{d-k}(M,\ZZ/n)$ for some~$n$. Let $\beta_{mn}\in H^{d-k}(M,\ZZ/mn)$ be the class induced by  $\beta_{n}$ for $m\geq 1$.  For all $m\geq 1$, there exists a class $\alpha_{mn}\in H^k(M,\ZZ/mn)$ with $\alpha_{mn}\smile\beta_{mn}\neq 0$ by the assertion with $\ZZ/mn$ coefficients. Since the $H^k(M,\ZZ/mn)$ are finite, one may use Tychonoff's theorem to choose the $\alpha_{mn}$ compatible with each other. The image of $\alpha_{n}$ by the boundary map of $0\to\ZZ\xrightarrow{n}\ZZ\to\ZZ/n\ZZ\to 0$ is divisible by $m$ for all $m\geq 1$ as $\alpha_{n}$ lifts to $H^k(M,\ZZ/mn)$. This image vanishes since $H^{k+1}(M,\ZZ)$ is finitely generated, so that $\alpha_n$ lifts to a class $\alpha\in H^k(M,\ZZ)$. Since $\alpha\smile\beta\neq 0$, the proof is complete.
\end{proof}

\subsubsection{Lifting cohomology classes to complex cobordism}

By \cite[Theorem~2.2]{totaro}, the morphism $\mu:MU^l(X)\to H^l(X,\ZZ)$ induced by (\ref{MUHZ}) is surjective for all topological spaces $X$ and all $l\leq 2$. Proposition \ref{MUsurj} describes other cases where surjectivity holds.

\begin{proposition}
\label{MUsurj}
If $X$ is a compact complex manifold of dimension $n$, the map 
$$\mu:MU^l(X)\to H^l(X,\ZZ)$$
induced by (\ref{MUHZ}) is surjective if $l+3\geq 2n$ or if $(l,n)=(4,4)$.
\end{proposition}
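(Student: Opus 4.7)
The approach will split according to the two hypotheses: the range $l+3\ge 2n$ admits a direct geometric construction via Poincar\'e duality, whereas $(l,n)=(4,4)$ requires the Atiyah--Hirzebruch spectral sequence together with Proposition~\ref{vanishing}, which is stated just below this proposition for exactly this purpose.

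In the range $l+3\ge 2n$, I would set $d:=2n-l\le 3$ and use Poincar\'e duality to identify $H^l(X,\ZZ)\cong H_d(X,\ZZ)$. Thom's classical realizability theorem then provides, for any class in $H_d$ with $d\le 3$, a representation as $f_*[Y]$ for some $\ci$-map $f:Y\to X$ from a closed oriented $\ci$-manifold $Y$ of dimension $d$. Every closed orientable manifold of dimension $\leq 3$ is stably parallelizable (for $d=3$ this is Stiefel's theorem; for $d\le 2$ it is elementary), so $TY$ is stably trivial and, after choosing an even-rank trivialization $TY\oplus\RR^{2a}\simeq\RR^{2a+d}$, one obtains a canonical stably complex structure on $N_f=f^*T_X-TY$ by declaring $f^*T_X\oplus\RR^{2a+d}\simeq TY\oplus(f^*T_X\oplus\CC^a)$. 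The class $[f]\in MU^l(X)$ thus constructed satisfies $\mu([f])=f_*1=\alpha$ by Poincar\'e duality, proving surjectivity.

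For the case $(l,n)=(4,4)$, I would instead run the Atiyah--Hirzebruch spectral sequence $E_2^{p,q}=H^p(X,\pi_{-q}(\mathbf{MU}))\Rightarrow MU^{p+q}(X)$, whose edge homomorphism is $\mu:MU^4(X)\twoheadrightarrow E_\infty^{4,0}\hookrightarrow H^4(X,\ZZ)$. Surjectivity of $\mu$ is then equivalent to the vanishing of every differential leaving $E_r^{4,0}$. Since $\pi_*(\mathbf{MU})$ is concentrated in even degrees, all even $d_r$ vanish identically; odd $d_r$ with $r\ge 5$ land in a subquotient of $H^{4+r}(X,\pi_{r-1}(\mathbf{MU}))=0$ because $4+r\ge 9$ exceeds the real dimension $2n=8$ of $X$. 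Only the differential $d_3:E_3^{4,0}\to E_3^{7,-2}$ remains. The classical identification of $d_3$ along the $(p,0)$-line of the AHSS for $\mathbf{MU}$ with the stable integral cohomology operation $\beta_{\ZZ}\Sq^2$ (reflecting the first nontrivial $k$-invariant of the Postnikov tower of $\mathbf{MU}$) will then reduce the problem to showing that $\beta_{\ZZ}\Sq^2\overline{\alpha}=0$ for every $\alpha\in H^4(X,\ZZ)$, which is precisely the content of Proposition~\ref{vanishing}.

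The main obstacle will be the identification of the $d_3$ differential with $\beta_{\ZZ}\Sq^2$; this rests on a standard but non-trivial computation of the low-degree $k$-invariants of $\mathbf{MU}$ and will require an explicit reference (for instance to the relevant pages of \cite{totaro} or \cite{adams-stable}). Once this input is granted, the rest is either formal (as in the first case, where complex orientability is automatic from stable parallelizability in dimension $\le 3$) or follows directly from Proposition~\ref{vanishing}.
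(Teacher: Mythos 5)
Your proposal is correct, but it only coincides with the paper's proof in the case $(l,n)=(4,4)$; the paper treats both cases uniformly by the spectral sequence argument. Namely, the paper extracts from the Atiyah--Hirzebruch spectral sequence the exact sequence (\ref{AHshort}) $MU^l(X)\xrightarrow{\mu} H^l(X,\ZZ)\xrightarrow{d_3} H^{l+3}(X,\ZZ)$, observes that $d_3$ is a stable integral cohomology operation of degree $3$, hence is either trivial or $\beta_{\ZZ}\Sq^2$ by \cite[Theorem~5.4~(b)]{kochman2} (with the identification $d_3=\beta_{\ZZ}\Sq^2$ as in \cite[Proposition~7.2]{AH2}), and then kills it in the range $l+3\geq 2n$ simply because $H^{l+3}(X,\ZZ)$ is torsion-free there (it is $0$ or the top group $H^{2n}(X,\ZZ)\simeq\ZZ$), while the case $(l,n)=(4,4)$ uses Proposition~\ref{vanishing}, exactly as you do. Your alternative treatment of the range $l+3\geq 2n$ -- Poincar\'e duality, Thom's realizability of integral homology classes in degrees $\leq 3$ by maps from closed oriented manifolds, stable parallelizability of such manifolds, and the observation that $f^*T_X-T_Y$ is then stably complex because $f^*T_X$ is a complex bundle, so that Quillen's description gives $\mu([f])=f_*1$ -- is a valid and genuinely more geometric route (up to the harmless orientation/sign check that $f_*1$ is Poincar\'e dual to $\pm f_*[Y]$, which does not affect surjectivity); it avoids the classification of degree-$3$ stable operations in that range, at the cost of invoking Thom's theorem and of not extending to $(4,4)$, where Proposition~\ref{vanishing} is genuinely needed. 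Your explicit verification that the even differentials and the odd differentials $d_r$, $r\geq 5$, vanish for dimension reasons is a point the paper leaves implicit in its appeal to the exact sequence (\ref{AHshort}); and the ``main obstacle'' you flag, the identification of $d_3$ along the $q=0$ row with $\beta_{\ZZ}\Sq^2$ (or at least with an operation that is either trivial or $\beta_{\ZZ}\Sq^2$), is precisely what the paper's citations of \cite{kochman2} and \cite{AH2} supply, so your argument is complete once those references are inserted.
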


\begin{proof}
The Atiyah--Hirzebruch spectral sequence 
$H^p(X,MU^{q}(pt))\Rightarrow MU^{p+q}(X)$
(for which apply \cite[Proposition 4.2.9]{Kochman}
% in adams sign missing at the top of p.215 as may be seen from Example p. 219
with $E=\mathbf{MU}$) and Milnor's computation of the cobordism ring of the point \cite[II, Theorem~8.1]{adams-stable} give an exact sequence
\begin{equation}
\label{AHshort}
MU^l(X)\xrightarrow{\mu} H^l(X,\ZZ)\xrightarrow{d_3} H^{l+3}(X,\ZZ)
\end{equation}
%The argument below appears here : https://mathoverflow.net/q/62644
The right-hand arrow $d_3$ of (\ref{AHshort}) makes sense for all finite-dimensional CW complexes~$X$ 
%This condition ensures the existence of the spectral sequence
and all~$l$, may be extended to all CW complexes by restriction to their $(l+4)$-skeleta, and the resulting cohomology operation commutes with suspension: it is a stable integral cohomology operation of degree $3$. It follows from \cite[Theorem~5.4~(b)]{kochman2} that there are exactly two such operations: the trivial one, and $\beta_{\ZZ}\Sq^2$ (as in \cite[Proposition~7.2]{AH2}, one may actually check that $d_3=\beta_{\ZZ}\Sq^2$). Both vanish on $H^l(X,\ZZ)$ (because $H^{l+3}(X,\ZZ)$ is torsion-free if $l+3\geq 2n$, and by Proposition~\ref{vanishing} if $(l,n)=(4,4)$). The proposition now follows from the exactness of~(\ref{AHshort}).
\end{proof}

\subsubsection{Vanishing of topological obstructions}

We finally reach the goal of \S\ref{optimality}.

\begin{theorem}
\label{main3}
Let $X$ be a compact complex manifold of dimension $n$ and let $\alpha\in H^l(X,\ZZ)$. If either $(l,n)=(3,3)$ or $(l,n)=(4,4)$, there exists a compact almost complex $\ci$-manifold $Y$ of complex dimension $n-1$, a $\ci$-map $f:Y\to X$ and a class $\beta\in H^{l-2}(Y,\ZZ)$ with $f_*\beta=\alpha$. 
\end{theorem}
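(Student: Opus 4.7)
The plan is to combine the surjectivity of Proposition \ref{MUsurj} with Quillen's geometric representation of complex cobordism, and then perform a product construction to reach the prescribed complex dimension. In both cases, Proposition \ref{MUsurj} provides a class $\gamma\in MU^l(X)$ with $\mu(\gamma)=\alpha$, and Quillen's description recalled in \S\ref{MU} represents $\gamma$ by a complex oriented proper $\ci$-map $g:Z\to X$ from a compact $\ci$-manifold $Z$ of real dimension $2n-l$, satisfying $g_*1=\mu(\gamma)=\alpha$. The stably complex structure on $N_g=g^*T_X-T_Z$, together with the fact that $g^*T_X$ is complex, implies that $Z$ is orientable; moreover, $T_Z$ itself is stably complex when $l$ is even (for odd $l$ one gets the same conclusion after adding a trivial real line bundle).

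For $(l,n)=(3,3)$, the manifold $Z$ is a compact orientable $3$-manifold, hence parallelizable by Stiefel's theorem, so $T_Z\simeq\RR^3$. I set $Y:=Z\times\SS^1$, which has trivial tangent bundle $\RR^4\simeq\CC^2$, hence carries an almost complex structure, and has complex dimension $n-1=2$. Taking $f:=g\circ\pi_1:Y\to X$ and $\beta:=\pi_2^*[\SS^1]\in H^1(Y,\ZZ)$, the projection formula yields $f_*\beta=g_*(\pi_{1*}\pi_2^*[\SS^1])=g_*1=\alpha$.

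For $(l,n)=(4,4)$, I set $Y:=Z\times\PP^1$, $f:=g\circ\pi_1$, and $\beta:=\pi_2^*[pt]\in H^2(Y,\ZZ)$. Then $Y$ is a compact orientable $6$-manifold of complex dimension $n-1=3$, and $f_*\beta=g_*1=\alpha$ again by the projection formula. Both $T_Z$ and $T_{\PP^1}$ are stably complex, so $T_Y$ is stably complex; hence $w_2(T_Y)$ is the mod $2$ reduction of an integral class, giving $W_3(Y):=\beta_{\ZZ}(w_2(T_Y))=0$. The classical theorem that a closed oriented $6$-manifold admits an almost complex structure if and only if $W_3=0$ then endows $Y$ with the required structure.

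The main technical input is this final appeal in the $(4,4)$ case to the $W_3$-criterion for almost complex structures on $6$-manifolds. By contrast, the $(3,3)$ case is essentially formal: parallelizability of orientable closed $3$-manifolds makes the almost complex structure on $Z\times\SS^1$ manifest, and no obstruction-theoretic input is required. The entire argument can thus be read as saying that the topological obstructions of Propositions \ref{obst2} and \ref{obst1} detect strong coniveau only once one leaves these very low dimensions, consistent with Theorem \ref{main2} being sharp.
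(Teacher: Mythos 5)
Your argument is correct and follows essentially the same route as the paper: surjectivity of $MU^l(X)\to H^l(X,\ZZ)$ (Proposition \ref{MUsurj}), Quillen's geometric description of complex cobordism to produce a compact stably complex manifold $Z$ with $g_*1=\alpha$, then taking the product with $\SS^1$ resp.\ $\PP^1(\CC)$ and invoking the existence criteria for almost complex structures in real dimensions $4$ and $6$. The only (harmless) divergence is in the $(3,3)$ case, where you use Stiefel's parallelizability of closed orientable $3$-manifolds to make $Z\times\SS^1$ parallelizable, whereas the paper checks Wu's criterion ($w_2(Y)=0$, $\sigma(Y)=\chi(Y)=0$, take $c=0$) for the $4$-manifold; your $(4,4)$ case, via $\beta_{\ZZ}w_2(T_Y)=0$ and the $W_3$-criterion for oriented $6$-manifolds, is identical to the paper's.
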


\begin{proof}
In both cases, the map $MU^l(X)\to H^l(X,\ZZ)$ is surjective by Proposition~\ref{MUsurj}. This means that there exist a compact stably almost complex $\ci$-manifold $M$ of dimension $2n-l$ and a $\ci$-map $h: M\to X$ so that $h_*1=\alpha$ (see \S\ref{MU}).

Consider first the case $n=l=3$. In this case we take $Y=M\times \SS^1$, $f=h\circ pr_1 \colon Y\to X$ and $\beta=pr_2^*u$ where $u\in H^1(\SS^1,\ZZ)$ is the oriented generator. We claim that $Y$ admits an almost complex structure. Wu \cite{wu4} showed that an oriented real $4$-manifold $Y$ admits an almost complex structure if and only if there is an integral class $c\in H^2(Y,\ZZ)$ which lifts to the mod $2$ Stiefel--Whitney class of the tangent bundle $w_2(Y)\in H^2(Y,\ZZ/2)$, and such that $c^2=3\sigma(Y)+2\chi(Y)$, where $\sigma$ is the signature and $\chi$ is the Euler characteristic. In our case, we compute that $\sigma(Y)=\chi(Y)=w_2(Y)=0$ (to show that $w_2(Y)=0$, apply \cite[Problem 12-B]{MS} to the orientable $3$-manifold $M$), so we can simply take $c=0$.

The case for $n=l=4$ follows in a similar way, letting $Y=M\times \PP^1(\CC)$, $f=h\circ pr_1$ and $\beta=pr_2^*v$ where $M$ is as above and $v\in H^2(\PP^1(\CC),\ZZ)$ is the first Chern class of $\O(1)$. The real bundle $T_Y\oplus \RR^k$ admits an almost complex structure for some $k>0$, showing that $w_2(T_Y\oplus \RR^k)=w_2(T_Y)$ is the mod 2 restriction of $c_1(T_Y\oplus \RR^k)$, hence that $\beta_{\ZZ}w_2(T_Y)=0$ in $H^3(Y,\ZZ)$. This concludes since this characteristic class is the only obstruction to an orientable $6$-manifold carrying an almost complex structure (see \cite[pp.~559-560, especially Remark 1]{massey}).
\end{proof}

\begin{remark}
\label{rkconiveaus}
When $X$ is projective, Theorem \ref{main3} demonstrates that there is no topological obstruction to $\alpha$ having strong coniveau $\geq 1$ for $(l,n)=(3,3)$ or $(l,n)=(4,4)$.
There are however obstructions to $\alpha$ having strong coniveau $1$ coming from Hodge theory: it is necessary that $\alpha$ has Hodge coniveau $\geq 1$, in the sense that its image in $H^l(X,\CC)$ has no component of type $(l,0)$ or $(0,l)$ in the Hodge decomposition. Of course, this Hodge-theoretic obstruction is also an obstruction to $\alpha$ having coniveau $\geq 1$. We do not know of any obstructions to a coniveau $\geq 1$ class having strong coniveau $\geq 1$ for these values of $(l,n)$.
\end{remark}

\section{Rational coefficients}
\label{ratcoeffs}

We now provide examples of complex varieties for which the coniveau and strong coniveau filtrations for rational cohomology classes differ. By Deligne (see Theorem~\ref{Deligne}), this cannot occur for smooth proper varieties.

We recall that a morphism $f:X\to Y$ of equidimensional complex varieties is \textit{semismall} if $\dim(X\times_Y X)\leq \dim(X)$.

\subsection{A geometric construction}
\label{geometric}

Our examples are based on the following lemma.

\begin{lemma}
\label{negativeconstruction}
Fix $l\geq 2$ and write $l=2r+k-1$ with $r\in\NN$ and $k\in\{0,1\}$. 
There exist a rational smooth projective complex variety $S$ of dimension $l$, a smooth codimension~$r$ subvariety $\iota:D\hookrightarrow S$, a morphism of normal projective varieties $g:S\to \overline{S}$, a finite set $\overline{\iota}:\overline{D}\hookrightarrow\overline{S}$ such that $g^{-1}(\overline{D})=D$ and $g$ is an isomorphism above $\overline{S}\setminus \overline{D}$, and a nonzero class $\rho\in H^k(D,\QQ)$ such that $\iota_*\rho=0$ in $H^{l+1}(S,\QQ)$.
\end{lemma}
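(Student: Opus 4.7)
I would prove the lemma by an explicit geometric construction, distinguishing the parity of $l$ via the binary parameter $k$. The strategy is to realize $D$ as a smooth subvariety in a rational projective $S$ with $N_{D/S}^\vee$ sufficiently ample to permit algebraic contraction of $D$ to a finite set of points, and then to arrange a nonzero $\rho \in H^k(D, \QQ)$ killed by the Gysin pushforward $\iota_*: H^k(D, \QQ) \to H^{l+1}(S, \QQ)$.

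For $k = 1$ (so $l = 2r$ and $\dim D = r$), I would take $D$ connected with $H^1(D, \QQ) \neq 0$. The base case $r = 1$ is classical: let $E \subset \PP^2$ be a smooth plane cubic and let $S$ be the blowup of $\PP^2$ at $n \geq 10$ general points of $E$. The strict transform $D$ of $E$ is a smooth elliptic curve with $D^2 = 9 - n < 0$; by standard contraction theory for surfaces, $D$ contracts to a single point $\bar v$ via a projective morphism $g: S \to \overline{S}$ to a normal projective surface. Since a smooth projective rational surface has $H^3(S, \QQ) = 0$, any nonzero $\rho \in H^1(D, \QQ) \cong \QQ^2$ satisfies $\iota_*\rho = 0$ automatically. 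For $r \geq 2$, I would replace $D$ by a higher-dimensional smooth projective variety with nonzero first Betti number (such as $E \times \PP^{r-1}$ or an abelian $r$-fold) embedded in a rational $2r$-fold $S$ as a codimension $r$ smooth subvariety with $N_{D/S}^\vee$ ample.

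For $k = 0$ (so $l = 2r - 1$ odd and $\dim D = r - 1$), it is natural to take $D$ disconnected, so that $\rho$ can be a nontrivial combination of the fundamental classes of its components. In the base case $r = 2$, I would construct a smooth projective rational 3-fold $S$ containing two disjoint smooth rational curves $C_1, C_2$, each with normal bundle $\O(-1)^{\oplus 2}$ and with $[C_1] = [C_2]$ in $H^2(S, \QQ)$. Setting $\rho := (1, -1) \in H^0(D, \QQ) \cong \QQ \oplus \QQ$ then gives $\iota_*\rho = [C_1] - [C_2] = 0$. Each $C_i$ is algebraically contractible via the Atiyah flop model, and since they are disjoint the two contractions can be performed simultaneously, yielding $g: S \to \overline{S}$ with $\overline{D}$ a two-point set. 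For $r \geq 3$, I would replace each $C_i$ by a $\PP^{r-1}$ with normal bundle $\O(-1)^{\oplus r}$ in the same numerical class.

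The main obstacle is the explicit construction of a rational projective $S$ containing $D$ with the required properties for general $(r, k)$. For $k = 1$ and $r \geq 2$, the difficulty is embedding a nonrational $D$ into a rational $S$ with sufficiently negative normal bundle; I would approach this via iterated blowups from a rational base combined with careful normal bundle computations. For $k = 0$ and $r \geq 3$, the delicate point is arranging the numerical equivalence of the components of $D$ while preserving projectivity of the small contraction. Once the geometric setup is in place, the vanishing $\iota_*\rho = 0$ follows either from vanishing of the target cohomology or from the explicit numerical cancellation between components.
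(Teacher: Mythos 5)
Your strategy is the right one at a high level, and it coincides in spirit with the paper's: produce a rational $S$ that admits a small projective contraction $g:S\to\overline S$ collapsing a smooth $D$ of the prescribed codimension to finitely many points, and kill $\iota_*\rho$ either by a cohomology vanishing on $S$ or by a cancellation between components of $D$. However, as written the proposal has genuine gaps, and at least one of them is the crux of the whole lemma.

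First, the base case $k=1$, $r=1$ (the blow-up of $\PP^2$ at $n\ge 10$ points of a cubic) is more delicate than you suggest. For a smooth elliptic curve $D$ with $D^2<0$ on a smooth projective surface, Grauert's criterion gives an analytic contraction and Artin's theorem a proper algebraic space, but \emph{projectivity} of $\overline S$ is not automatic and depends on the configuration of points (one needs a nef and big line bundle $L$ with $L\cdot D=0$; for very general points this is tied to the absence of unexpected negative curves). You would have to address this or choose the points more carefully.

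Second, and more fundamentally, your treatment of $k=1$ with $r\ge 2$ has a hidden obstruction you do not address. You take $D$ connected with $H^1(D,\QQ)\ne 0$, so ``numerical cancellation between components'' is unavailable, and the only remaining mechanism is $H^{2r+1}(S,\QQ)=0$. But a rational variety of dimension $2r$ can certainly have nonzero odd cohomology in degree $2r+1$ (already a blow-up along a curve of positive genus produces it). So the assertion ``the vanishing follows from vanishing of the target cohomology'' is precisely what has to be engineered, and you give no mechanism for it. This is where the paper's construction earns its keep: $S$ is built as a hypersurface inside a projective bundle $P$ over $\PP^2\times\PP^{r-1}$, the map $S\to\overline S$ is \emph{semismall}, and the de Cataldo–Migliorini semismall Lefschetz theorem (\cite[Prop.~2.1.5]{dCMsemismall}) identifies the cohomology of $S$ in the relevant degree with that of $P$, which has no odd cohomology. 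The same tool simultaneously handles the $k=0$ case, where the two components $L_1,L_2\cong\PP^{r-1}$ become homologous once pushed into $P$. In other words, the missing idea in your write-up is that the contraction should be arranged to be semismall, so that a Lefschetz-type theorem transfers the cohomology computation to an ambient projective bundle where it is transparent. Your sketch for general $r$ (``iterated blow-ups with careful normal bundle computations,'' ``replace $C_i$ by $\PP^{r-1}$ in the same numerical class'') is an honest acknowledgment of this gap rather than a proof of it, and I do not think it closes without an input of this kind.

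Finally, note that the conditions $g^{-1}(\overline D)=D$ and ``$g$ an isomorphism above $\overline S\setminus\overline D$'' must be checked, i.e.\ that the exceptional locus of $g$ is exactly $D$ and nothing more. This is automatic in the Atiyah flop model you invoke for $k=0$, $r=2$, but needs care in the elliptic curve case and for the higher-dimensional generalizations.
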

\begin{proof}
We first consider the case $k=0$.
 Let $P = \PP(\E)$ where $\E$ is the vector bundle $\O^2\oplus \O(1)^{r}$ over $\PP^{r-1}$. Then $P$ is of dimension $2r$, and the tautological bundle $M=\O_P(1)$ gives a morphism $G:P\to \PP^n$ which contracts exactly the subvariety $\PP(\O^2)\simeq \PP^1\times \PP^{r-1}$ to a $\PP^1$. Then let $S$ be a generic divisor in $|2M|$ which is smooth by the Bertini theorem. The morphism $G|_S:S \to \PP^n$ now contracts two disjoint copies $L_1,L_2$ of $\PP^{r-1}$ to two points. Let $\overline{S}$ be the normalization of the image of $G|_S$ with induced morphism $g:S\to\overline{S}$, and define $D=L_1\cup L_2$ and $\rho=[L_1]-[L_2]\in H^0(D,\QQ)$. The variety $S$ is a quadric bundle over $\PP^{r-1}$ of dimension $2r-1$, and it is rational as it contains a section ($L_1$ for instance).  Note that the morphism $G$ induced by $M$ is semismall. By the semismall version of the weak Lefschetz theorem (see \cite[Proposition 2.1.5]{dCMsemismall}), the restriction map $H^{2r-2}(P,\QQ)\to H^{2r-2}(S,\QQ)$ is an isomorphism. By Poincar\'e duality, so is the pushforward map $H^{2r}(S,\QQ)\to H^{2r+2}(P,\QQ)$. Clearly the class $i_*\rho$ maps to 0 by this map, so we conclude that $i_*\rho=0$, as we want.
 
 For the $k=1$ case, we use a similar construction. Let $V=\PP^2\times \PP^{r-1}$ and let $H_1$ and $H_2$ denote the two pullbacks from the hyperplane bundles on each factor. 
Let $P = \PP(\O \oplus \O(H_1+H_2)^{r})$ over $V$ and let $M=\O_P(1)$ denote the tautological bundle. Note that $P$ has dimension $2r+1$. The morphism $G:P\to \PP^n$ given by $M$ contracts exactly the codimension $r$ subvariety $W=\PP(\O)\simeq \PP^2\times \PP^{r-1}$ to a point.
Now let $S$ be a generic divisor in $|M+3H_1|$, which is smooth by the Bertini theorem. Note that $S$ is rational, since the projection $S\to V$ is generically a $\PP^{r-1}$\nobreakdash-bundle over $V$. Let $\overline{S}$ be the normalization of the image of $G|_S$. The induced morphism $g:S\to\overline{S}$ is birational and contracts exactly the locus $D=S \cap W$ to a point. The latter is a divisor of type $3H_1$ on $\PP^2 \times \PP^{r-1}$, thus isomorphic to $E\times \PP^{r-1}$, where $E$ is an elliptic curve. Hence there is a non-zero class $\rho\in H^1(D,\QQ)$. Since the morphism induced by $M+3H_1$ is semismall (it contracts $W\simeq \PP^2\times \PP^{r-1}$ to a $\PP^2$), the semismall version of the weak Lefschetz theorem  \cite[Proposition 2.1.5]{dCMsemismall} shows that $S$ has no odd degree cohomology. It follows that $i_*\rho=0$, as we wanted to show. 
 \end{proof}

\subsection{The kernel of local intersection forms} 
\label{dCM}

Lemma \ref{decompolemma} below is an application of the decomposition theorem of Beilinson, Bernstein, Deligne and Gabber \cite[Th\'eor\`eme~6.2.5]{BBDG}, as well as of a closely related theorem of de Cataldo and Migliorini \cite[Theorem~2.1.10]{dCM} which studies intersection forms on the homology of the fibers of a projective morphism with smooth total space.
For an overview of these topics, we refer to \cite{dCM} or Williamson's survey \cite{williamson}.

We use freely the theory of perverse sheaves \cite{BBDG} (see also the survey \cite{dCMsurvey}). If $X$ is a complex variety, we let
$D^b(X)$ be the bounded derived category of sheaves of $\QQ$-vector spaces on $X$, and $D^b_c(X)$ be the full subcategory of objects with constructible cohomology 
%coincides with bounded derived category of constructible sheaves of Q-vector spaces by a theorem of Nori
(see \cite[\S 1.5, \S 5.3]{dCMsurvey}). The triangulated category $D^b_c(X)$ may be endowed with the perverse $t$-structure (see \cite[\S 2.3]{dCMsurvey}). The heart of this $t$-structure is the abelian category $\Perv(X)$ of perverse sheaves on $X$.

We keep the notation of Lemma \ref{negativeconstruction}. 

\begin{lemma}
\label{decompolemma}
Let $T$ be a smooth projective variety of dimension $n:=l-1$, let $f:T\to S$ be a morphism, and define $E:=f^{-1}(D)$ with inclusion $j:E\hookrightarrow T$.
Consider the composition
\begin{equation}
\label{refinedintersection}
\psi:H_{n}(E,\QQ)\xrightarrow{j_*} H_{n}(T,\QQ)\simeq H^{n}(T,\QQ)\xrightarrow{j^*}H^{n}(E,\QQ),
\end{equation}
where the middle isomorphism stems from Poincar\'e duality.  Then
$$\ker \left(\psi:H_{n}(E,\QQ)\to H^{n}(E,\QQ)\right)\subset\ker\left((f|_E)_*:H_{n}(E,\QQ)\to H_{n}(D,\QQ)\right).$$
\end{lemma}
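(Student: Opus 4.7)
This lemma fits squarely in the framework set up in \S\ref{dCM}. The key observation is that $E$ is the fiber of the projective morphism $h := g \circ f \colon T \to \overline S$ (with smooth source $T$ of dimension $n$) over the zero-dimensional subvariety $\overline D \subset \overline S$, since
\[
h^{-1}(\overline D) \;=\; f^{-1}(g^{-1}(\overline D)) \;=\; f^{-1}(D) \;=\; E.
\]
Under this identification, $\psi$ is precisely the refined intersection form on $H_n(E,\QQ)$ attached to this fiber in the sense of de Cataldo--Migliorini.

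My plan is to apply the BBDG decomposition theorem \cite{BBDG} to $h$ and split
\[
Rh_* \QQ_T[n] \;\simeq\; \mathcal{A} \oplus \mathcal{B},
\]
where $\mathcal{A}$ is the direct sum (over all perverse degrees) of the simple IC summands whose support is contained in the finite set $\overline D$ (hence skyscraper sheaves, up to shift), and $\mathcal{B}$ is the direct sum of the simple IC summands with positive-dimensional support in $\overline S$. Via proper base change along $\overline D \hookrightarrow \overline S$, this decomposition induces a splitting $H_n(E,\QQ) = H_{\mathcal A} \oplus H_{\mathcal B}$. The refined intersection form theorem \cite[Theorem 2.1.10]{dCM} then asserts that $\psi$ is non-degenerate on $H_{\mathcal A}$ and vanishes identically on $H_{\mathcal B}$; combined, these yield $\ker \psi = H_{\mathcal B}$.

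The lemma thus reduces to showing that $H_{\mathcal B}$ is annihilated by the pushforward $(f|_E)_* \colon H_n(E,\QQ) \to H_n(D,\QQ)$. I expect this to be the main obstacle. The rough idea is that classes in $H_{\mathcal B}$ are "non-local" along $\overline D$: they are detected by IC summands whose support has a positive-dimensional component meeting $\overline D$ only in finitely many points, so, using that $g \colon S \setminus D \xrightarrow{\sim} \overline S \setminus \overline D$ and proper base change for $f$, such classes can be represented by cycles on $E$ whose image in $D$ bounds in $D$ itself (not merely in $\overline D$, where vanishing is trivial since $\overline D$ is finite and $n \geq 1$). Making this precise should involve combining the dCM decomposition for $h$ with the decomposition theorem applied to $f \colon T \to S$, and exploiting that the only part of $Rg_* \QQ_S[l]$ beyond the generic IC term is supported on $\overline D$, so that only the $\mathcal{A}$-contribution survives pairing with classes pushed forward from $D$.
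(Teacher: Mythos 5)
Your setup is on the right track and aligns with the paper's framework: you correctly identify $E = (g\circ f)^{-1}(\overline D)$, invoke \cite[Theorem~2.1.10]{dCM} to describe $\ker\psi$, and bring in the BBDG decomposition theorem. Your characterization $\ker\psi = H_{\mathcal B}$ (contributions of the positive-dimensional IC summands) is in fact equivalent, in this zero-dimensional-fiber situation, to the paper's $\ker\psi = H_{n,\leq -1}(E,\QQ)$: since $\overline D$ is finite, a skyscraper summand $V\otimes\QQ_p$ in perverse degree $s$ contributes $H^{-s}(\{p\},V\otimes\QQ_p)$, which vanishes unless $s=0$; while a summand $\IC_Z[-s]$ with $\dim Z>0$ satisfies $\overline\iota^!\IC_Z\in D^{\geq 1}$, so it contributes only for $s\leq -1$. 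So your description of $\ker\psi$ is correct.

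However, you explicitly flag the remaining step --- that $H_{\mathcal B}$ is annihilated by $(f|_E)_*$ --- as ``the main obstacle,'' and your sketch of it does not contain the argument that actually makes it work. The paper's proof of this step proceeds by constructing the map $\nu:R(g\circ f)_*\QQ_T[n]\to(Rg_*\QQ_S[l])[1]$ inducing $(f|_E)_*$ on the relevant local homology, building a commutative diagram relating perverse truncations of source and target, and then killing the restriction of $\nu$ to $\prescript{\perv}{}\tau_{\leq -1}R(g\circ f)_*\QQ_T[n]$. The inputs for that last vanishing are precisely the things your heuristic does not engage with: (a) the \emph{semismallness} of $g$, which forces $Rg_*\QQ_S[l]$ to be perverse and isomorphic to $\IC(\overline S)$ (resp.\ $\IC(\overline S)\oplus\overline\iota_*\QQ_{\overline D}$ when $k=1$); (b) the $t$-structure axiom killing maps out of the summands $P_s[-s]$ with $s\leq -2$; (c) a \emph{support} argument: any map $P_{-1}[1]\to\IC(\overline S)[1]$ vanishes because $\IC(\overline S)$ is simple with full support, whereas $g\circ f$ is not dominant so no simple factor of $P_{-1}$ can have support $\overline S$; and (d) for $k=1$, the explicit computation $H^1(\overline D,\overline\iota^!\overline\iota_*\QQ_{\overline D})=H^1(\overline D,\QQ)=0$. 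Your proposal instead talks about applying the decomposition theorem to $f:T\to S$ (the paper applies it to $g\circ f$, and separately the semismall version to $g$) and about cycles ``bounding in $D$,'' which is a different and unjustified mechanism. In short: right framework and right characterization of $\ker\psi$, but the step you identify as the crux is genuinely missing, and your hints toward it do not point to the semismallness-plus-support argument that the proof actually relies on.
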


\begin{proof}
Let $\varepsilon\in H_{n}(E,\QQ)$ be such that $\psi(\varepsilon)=0$. We will show that ${(f|_E)_*\varepsilon=0}$.
To do so, we use the computation of $\ker(\psi)$ by de Cataldo and Migliorini \cite[Theorem~2.1.10]{dCM} in terms of an induced perverse filtration on $H_{n}(E,\QQ)$. The decomposition theorem \cite{BBDG} will then allow us to control this filtration.

Let $\omega_S=\QQ_S[2l]$, $\omega_T=\QQ_T[2n]$, $\omega_D$ and $\omega_E$ be the dualizing complexes of $S$,~$T$,~$D$ and~$E$.
There are natural isomorphisms (as in \cite[\S 3.4]{dCM}, see also \cite[\S 5.8]{dCMsurvey} for a formulary in constructible bounded derived categories):
\begin{equation*}
\begin{alignedat}{4}
H_{n}(E,\QQ)&=H^{-n}(E,\omega_E)=H^{-n}(E,j^!\omega_T)=H^{0}(E,j^!\QQ_T[n])\\
&=H^{0}(\overline{D},R((g\circ f)|_E)_*j^!\QQ_T[n])=H^{0}(\overline{D},\overline{\iota}^!R(g\circ f)_*\QQ_T[n]),
\end{alignedat}
\end{equation*}
 and similarly 
 \begin{equation*}
\begin{alignedat}{4}
H_{n}(D,\QQ)&=H^{-n}(D,\omega_D)=H^{-n}(D,\iota^!\omega_S)=H^{1}(D,\iota^!\QQ_S[l])\\
&=H^{1}(\overline{D},R(g|_D)_*\iota^!\QQ_S[l])=H^{1}(\overline{D},\overline{\iota}^!Rg_*\QQ_S[l]).
\end{alignedat}
\end{equation*}

As in \cite[\S\S4.2-4.3]{dCM}, endow $H_{n}(E,\QQ)$ 
with the increasing filtration induced by the perverse filtration of the complex $R(g\circ f)_*\QQ_T[n]$
in the following way:
$$H_{n,\leq s}(E,\QQ)=\im\left(H^{0}(\overline{D},\overline{\iota}^!\prescript{\perv}{}\tau_{\leq s}R(g\circ f)_*\QQ_T[n])\to H^{0}(\overline{D},\overline{\iota}^!R(g\circ f)_*\QQ_T[n])\right),$$
where the $\prescript{\perv}{}\tau_{\leq s}$ are the perverse truncation functors (see \cite[\S 2.3]{dCMsurvey}). 

One has $H_{n,\leq 0}(E,\QQ)=H_{n}(E,\QQ)$ by \cite[Lemma 4.3.6]{dCM}. Applying \cite[Theorem~2.1.10]{dCM} to $g\circ f$ with $a=b=0$ shows that $\ker(\psi)=H_{n,\leq -1}(E,\QQ)$, hence that $\varepsilon$ lifts to a class $\widetilde{\varepsilon}\in H^{0}(\overline{D},\overline{\iota}^!\prescript{\perv}{}\tau_{\leq -1}R(g\circ f)_*\QQ_T[n])$.

The morphism $Rf_*\omega_T\to\omega_S$ obtained by adjunction from the isomorphisms $f^!\omega_S\simeq \omega_T$ and $Rf_*\simeq Rf_!$ yields a morphism $\nu:R(g\circ f)_*\QQ_T[n]\to (Rg_*\QQ_S[l])[1]$ which induces
the pushforward $(f|_E)_*:H_{n}(E,\QQ)\to H_{n}(D,\QQ)$ (see \cite[\S 3.4]{dCM}). We deduce a commutative diagram whose vertical arrows are induced by $\nu$:
\begin{equation}
\label{diagpervers}
\begin{aligned}
\xymatrix
@R=0.3cm
{
H^{0}(\overline{D},\overline{\iota}^!\prescript{\perv}{}\tau_{\leq -1}R(g\circ f)_*\QQ_T[n])\ar[r]\ar^{}[d]&
H_{n}(E,\QQ)\ar^{(f|_E)_*}[d]  \\
H^{1}(\overline{D},\overline{\iota}^!\prescript{\perv}{}\tau_{\leq 0}Rg_*\QQ_S[l])\ar[r]& H_{n}(D,\QQ).
}
\end{aligned}
\end{equation}

The decomposition theorem \cite[Th\'eor\`eme 6.2.5]{BBDG} applied to $g\circ f$ shows that $\prescript{\perv}{}\tau_{\leq -1}R(g\circ f)_*\QQ_T[n]=\bigoplus_{s\leq -1} P_s[-s]$,
where $P_s$ is a direct sum of simple perverse sheaves.

The morphism $g:S\to\overline{S}$ is semismall in the sense that $\dim(S\times_{\overline{S}}S)\leq \dim(S)$ because $2\dim(D)=2r+2k-2\leq 2r+k-1=\dim(S)$. The particular shape taken by the decomposition theorem for semismall morphisms \cite[\S 1.7]{BorhoMacPherson} (see also \cite[Theorem 2.4]{williamson}) shows that $Rg_*\QQ_S[l]$ is a perverse sheaf (hence that  $\prescript{\perv}{}\tau_{\leq 0}Rg_*\QQ_S[l])=Rg_*\QQ_S[l]$), and that there exists an isomorphism $Rg_*\QQ_S[l]=\IC(\overline{S})$ if $k=0$ (resp. $Rg_*\QQ_S[l]=\IC(\overline{S})\oplus\overline{\iota}_*\QQ_{\overline{D}}$ if $k=1$). Here, we have denoted by $\IC(\overline{S})$ the intersection complex of $\overline{S}$, which is a simple perverse sheaf (see \cite[\S 3.8]{dCM}).

The morphism $\prescript{\perv}{}\tau_{\leq -1}\nu:\bigoplus_{s\leq -1} P_s[-s]\to (Rg_*\QQ_S[l])[1]$
vanishes on the direct summand $\bigoplus_{s\leq -2} P_s[-s]$ by \cite[D\'efinition 1.3.1~(i)]{BBDG} since perverse sheaves form the heart of a $t$-structure. The induced morphism 
$P_{-1}[1]\to\IC(\overline{S})[1]$ also vanishes since a morphism of simple perverse sheaves is either zero or an isomorphism (as in any abelian category), and since the support of $\IC(\overline{S})$ is equal to $\overline{S}$ whereas the supports of the simple factors of $P_{-1}$ cannot be equal to $\overline{S}$ as $g\circ f$ is not dominant.

The left vertical arrow of (\ref{diagpervers}) is
obtained by applying the functor $H^0(\overline{D}, \overline{\iota}^!(-))$ to $\prescript{\perv}{}\tau_{\leq -1}\nu$. The above shows that it vanishes if $k=0$, and that it is induced by
a morphism $H^0(\overline{D}, \overline{\iota}^!P_{-1}[1])\to H^0(\overline{D}, \overline{\iota}^!\overline{\iota}_*\QQ_{\overline{D}}[1])$
if $k=1$. The computation $H^0(\overline{D}, \overline{\iota}^!\overline{\iota}_*\QQ_{\overline{D}}[1])=H^1(\overline{D}, \overline{\iota}^!\overline{\iota}_*\QQ_{\overline{D}})=H^1(\overline{D},\QQ_{\overline{D}})=0$,
%The second equality is clear. Let $\eta:S^0\to\overline{S}$ be the complementary open immersion. For any F, there is a triangle \iota_!\iota^!F->F->\eta_*\eta^*F->. Note that \iota_!=\iota_* as proper. Apply it to F=\iota_*Q_{\overline{D}} so that \eta^*F=0. this gives an iso \iota_*\iota^!Q_{\overline{D}}=\iota_*Q_{\overline{D}}. Since \iota_* is fully faithful, \iota^!Q_{\overline{D}}=Q_{\overline{D}}.
shows that it vanishes in all cases. In particular, the image of $\widetilde{\varepsilon}$ by the left vertical arrow of (\ref{diagpervers}) is zero, and the commutativity of (\ref{diagpervers}) shows that $(f|_E)_*\varepsilon=0$.
\end{proof}

\begin{remark}
When $l=3$, Lemma \ref{decompolemma} follows from Mumford's theorem that the intersection matrix of the irreducible components of a contractible curve in a smooth projective surface is negative definite \cite[p.~6]{mumford}.
Indeed, this theorem, applied to the $1$-dimensional components of $E$, shows that $\psi:H_2(E,\QQ)\to H^2(E,\QQ)$ is an isomorphism (unless $E=T$ in which case Lemma \ref{decompolemma} is obvious because $\psi$ is an isomorphism).
This particular case of Lemma \ref{decompolemma} would be sufficient to prove Theorem \ref{main4} for $l=2c+1$.
\end{remark}

\subsection{Open varieties} 
\label{con1examples}

We still keep the notation of Lemma \ref{negativeconstruction}. Define $S^0:=S\setminus D$ and let $\gamma\in H^l(S^0,\QQ)$ be a lift of $\rho$ in the long exact sequence of the pair $(S,D)$:
\begin{equation}
\label{longrelative}
\dots\to H^l(S^0,\QQ)\to H^k(D,\QQ)\xrightarrow{\iota_*} H^{l+1}(S,\QQ)\to\dots
\end{equation}

\begin{lemma}
\label{open}
If $l\geq 3$, then $\gamma\in H^l(S^0,\QQ)$ has coniveau $\geq 1$ and strong coniveau~$0$.
\end{lemma}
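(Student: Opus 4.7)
My plan is to handle the two statements separately; the substantive content lies in the strong coniveau vanishing, which will be proved by contradiction using Lemma~\ref{decompolemma}, while coniveau $\geq 1$ follows by a direct geometric argument.

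For strong coniveau $0$, I argue by contradiction. Suppose $\gamma=f'_*\beta$ for a proper morphism $f'\colon T'\to S^0$ from a smooth variety $T'$ of dimension $l-1$ and a class $\beta\in H^{l-2}(T',\QQ)$; by Poincar\'e duality on the smooth variety $T'$ of real dimension $2(l-1)$, I view $\beta$ as an element of $H_l^{BM}(T',\QQ)$. Using Nagata's compactification and Hironaka's desingularization, I extend $f'$ to a projective morphism $f\colon T\to S$ from a smooth projective variety $T$ of dimension $l-1$, arranged so that $E:=f^{-1}(D)=T\setminus T'$. The Borel--Moore homology long exact sequences for the pairs $(T,E)$ and $(S,D)$ fit into a commutative diagram, and I set $\varepsilon:=\partial_T\beta\in H_{l-1}(E,\QQ)$. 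By commutativity, $(f|_E)_*\varepsilon=\partial_S(f'_*\beta)=\partial_S\gamma$, which under the Poincar\'e duality identification $H_{l-1}(D,\QQ)\simeq H^k(D,\QQ)$ equals $\rho$. Exactness of the top sequence $H_l(T,\QQ)\to H_l^{BM}(T',\QQ)\xrightarrow{\partial_T}H_{l-1}(E,\QQ)\xrightarrow{j_*}H_{l-1}(T,\QQ)$ forces $j_*\varepsilon=0$, so the composite $\psi\colon H_n(E,\QQ)\xrightarrow{j_*}H_n(T,\QQ)\simeq H^n(T,\QQ)\xrightarrow{j^*}H^n(E,\QQ)$ from Lemma~\ref{decompolemma} (with $n=l-1$) satisfies $\psi(\varepsilon)=0$. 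Thus Lemma~\ref{decompolemma} applies and yields $(f|_E)_*\varepsilon=0$, contradicting $\rho\neq 0$.

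For coniveau $\geq 1$, I choose a hyperplane section $\bar H\subset\overline S$ containing the finite set $\overline D$ and set $H:=g^{-1}(\bar H)\subset S$, which is a divisor containing $D$. The closed subset $Z:=H\cap S^0=H\setminus D$ has codimension $1$ in $S^0$, and since $g$ is an isomorphism on $S^0$, it restricts to an isomorphism $S\setminus H\isoto\overline S\setminus\bar H$, identifying $S^0\setminus Z$ with $\overline S\setminus\bar H$. A diagram chase through the commutative long exact sequences of the pairs $(\overline S,\overline D)$ and $(\overline S,\bar H)$, combined with the freedom to alter the lift $\gamma$ by an element of the image of $H^l(S,\QQ)\to H^l(S^0,\QQ)$, produces a representative satisfying $\gamma|_{S^0\setminus Z}=0$, which exhibits $\gamma\in N^1H^l(S^0,\QQ)$.

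The principal obstacle is arranging the compactification in the strong coniveau argument so that $T'=T\setminus E$ exactly; this may require blowing up to ensure $f^{-1}(D)\cap T'=\emptyset$ and discarding extra components of $f^{-1}(S^0)$. Once this is in place, the diagram chase identifying $(f|_E)_*\varepsilon$ with $\rho$ and the verification $\psi(\varepsilon)=0$ are straightforward, and Lemma~\ref{decompolemma} immediately delivers the contradiction.
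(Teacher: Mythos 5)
Your argument for strong coniveau~$0$ is essentially the paper's: you compactify $f^0$ to $f\colon T\to S$ with $E=f^{-1}(D)=T\setminus T^0$, set $\varepsilon:=\partial_T\beta^\vee\in H_{l-1}(E,\QQ)$, observe that exactness forces $j_*\varepsilon=0$ and hence $\psi(\varepsilon)=0$, while commutativity gives $(f|_E)_*\varepsilon=\rho^\vee\neq 0$, contradicting Lemma~\ref{decompolemma}. This matches the paper.

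The coniveau part, however, takes a different route from the paper and has a genuine gap. Translating your claim to Borel--Moore homology on $S$: with $H:=g^{-1}(\bar H)\supset D$ and $Z=H\setminus D$, finding a lift $\gamma$ of $\rho$ with $\gamma|_{S^0\setminus Z}=0$ is equivalent (via the localization triangle for $D\subset H\subset S$, together with the identifications $H^{l+1}_D(S,\QQ)\cong H_{l-1}(D,\QQ)$ and $H^{l+1}_H(S,\QQ)\cong H_{l-1}(H,\QQ)$) to the vanishing of the pushforward of $\rho^\vee$ under $H_{l-1}(D,\QQ)\to H_{l-1}(H,\QQ)$. The hypothesis $\iota_*\rho=0$ only gives that $\rho^\vee$ dies in $H_{l-1}(S,\QQ)$, and since $\dim H=l-1$ this is a middle-dimensional homology group of $H$ where $\rho^\vee$ has no reason to vanish (e.g.\ in the case $k=0$, $\rho^\vee=[L_1]-[L_2]$ for two disjoint linear spaces contained in $H$, which will typically be linearly independent in $H_{l-1}(H,\QQ)$). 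No choice of $\bar H$ or modification of the lift by $\operatorname{im}(H^l(S,\QQ)\to H^l(S^0,\QQ))$ is shown to remove this obstruction. The paper avoids this entirely by a soft argument: since $\codim_S D=r\geq 2$ when $l\geq 3$, Gersten's conjecture (Bloch--Ogus) gives $H^0(S^0,\mathcal{H}^l(\QQ))=H^0(S,\mathcal{H}^l(\QQ))$, and the latter vanishes because $S$ is rational; hence \emph{every} class in $H^l(S^0,\QQ)$ has coniveau $\geq 1$. You should replace your direct geometric attempt by this argument (or supply the missing vanishing of $\rho^\vee$ in $H_{l-1}(H,\QQ)$, which does not appear to hold).
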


\begin{proof}
Let $\widetilde{\gamma}$ be the image of $\gamma$ in $H^0(S^0,\mathcal{H}^l(\QQ))$ (see \S\ref{coniveautorsion}).
Since $l\geq 3$, $D$ has codimension $r\geq 2$ in $S$. Gersten's conjecture for Betti cohomology, proven by Bloch and Ogus, thus shows that $H^0(S^0,\mathcal{H}^l(\QQ))=H^0(S,\mathcal{H}^l(\QQ))$ (see \cite[Theorem~6.1]{blochogus}), and this group vanishes because  $S$ is rational (see \cite[Proposition~3.3~(i)]{colliotvoisin}). It follows that $\widetilde{\gamma}=0$ hence that $\gamma$ has coniveau $\geq 1$.

Assume for contradiction that the class $\gamma$ has strong coniveau~$\geq 1$. Then there exist a smooth complex variety $T^0$ of dimension $l-1$, a proper morphism $f^0:T^0\to S^0$ which we may assume to be projective by Chow's lemma, and a class $\delta\in H^{l-2}(T^0,\QQ)$ such that $f^0_*\delta=\gamma$ (that we may choose $T^0$ of dimension $l-1$ is explained in \S\ref{2filtrations}). 

Let $T$ be a smooth projective compactification of $T^0$ such that $f^0$ extends to a morphism $f:T\to S$. Define $E:=T\setminus T^0=f^{-1}(D)$ and let $j:E\hookrightarrow T$ be the inclusion.
The long exact sequences of $(S,D)$ and $(T,E)$ in Borel--Moore homology fit into a commutative exact diagram (see \cite[IX.2.1]{iversen})
%\cite[(1.2.4) and Example 2.3]{blochogus}
\begin{equation}
\label{BOpushfwd}
\begin{aligned}
\xymatrix
@R=0.3cm
{
\cdots\ar[r]&
H^{\BM}_l(T^0,\QQ)\ar^{}[r]\ar^{f^0_*}[d]&
H_{l-1}(E,\QQ)\ar^{(f|_E)_*}[d] 
\ar^{j_*}[r]& H_{l-1}(T,\QQ)\ar^{f_*}[d]
\ar[r]&\cdots \\
\cdots\ar[r]&
H^{\BM}_l(S^0,\QQ)\ar[r]& H_{l-1}(D,\QQ) 
\ar^{\iota_*}[r]& H_{l-1}(S,\QQ)
\ar[r]&\cdots
}
\end{aligned}
\end{equation}
whose bottom row identifies with (\ref{longrelative}) via Poincar\'e duality. Lemma \ref{decompolemma} shows that $\ker(j_*)\subset \ker((f|_E)_*)$ as subspaces of $H_{l-1}(E,\QQ)$.
Let $\varepsilon\in H_{l-1}(E,\QQ)$ be the image by the upper left horizontal arrow of (\ref{BOpushfwd}) of the class $\delta^\vee\in H^{\BM}_l(T^0,\QQ)$, Poincar\'e dual to $\delta$. The exactness of (\ref{BOpushfwd}) shows that $j_*\varepsilon=0$.
The commutativity of (\ref{BOpushfwd}) shows that $(f|_E)_*\varepsilon$ is the class $\rho^{\vee}\in H_{l-1}(D,\QQ)$,  Poincar\'e dual to $\rho$, which is nonzero. This contradicts the inclusion $\ker(j_*)\subset \ker((f|_E)_*)$.
\end{proof}

It is easy to deduce analogous examples for higher values of the coniveau.

\begin{theorem}
\label{main4}
For all $c\geq 1$ and $l\geq 2c+1$, there exists a smooth quasi-projective rational complex variety $X$ of dimension $l-c+1$
such that the inclusion $\widetilde N^cH^{l}(X,\QQ)\subset N^cH^{l}(X,\QQ)$ is strict.
\end{theorem}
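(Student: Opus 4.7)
The plan is to reduce Theorem \ref{main4} to the case $c=1$, which is exactly Lemma \ref{open}. Set $l':=l-2(c-1)$. The hypothesis $l\geq 2c+1$ ensures $l'\geq 3$, so Lemma \ref{open} (applied to the data produced by Lemma \ref{negativeconstruction} with parameter $l'$) furnishes a smooth quasi-projective rational complex variety $S^0$ of dimension $l'$ together with a class $\gamma\in H^{l'}(S^0,\QQ)$ of coniveau $\geq 1$ and strong coniveau~$0$.

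Now take $X:=S^0\times\PP^{c-1}$, with projections $p:X\to S^0$ and $q:X\to\PP^{c-1}$. It is smooth quasi-projective rational of dimension $l'+(c-1)=l-c+1$, as required. Let $\eta\in H^{2(c-1)}(\PP^{c-1},\QQ)$ denote the class of a point and set $\alpha:=p^*\gamma\smile q^*\eta\in H^l(X,\QQ)$. That $\alpha\in N^cH^l(X,\QQ)$ is immediate: if $W\subset S^0$ is a closed subset of codimension $\geq 1$ on whose complement $\gamma$ vanishes, then $\alpha$ vanishes on $X\setminus(W\times\{\mathrm{pt}\})$, and $W\times\{\mathrm{pt}\}$ has codimension $\geq c$ in~$X$.

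The crucial verification is that $\alpha\notin\widetilde N^cH^l(X,\QQ)$. Suppose for contradiction that $\alpha=\sum_i f_{i,*}\beta_i$ with each $f_i:T_i\to X$ a proper morphism from a smooth variety of dimension $\leq\dim X-c=l'-1$. Since $\PP^{c-1}$ is projective, $p$ is proper, and the projection formula, combined with the K\"unneth/fibre-integration identity $p_*q^*\eta=1$, yields $p_*\alpha=\gamma$. On the other hand, $p_*\alpha=\sum_i(p\circ f_i)_*\beta_i$ expresses $\gamma$ as a sum of Gysin pushforwards from smooth varieties of dimension $\leq l'-1=\dim S^0-1$, i.e., as an element of $\widetilde N^1H^{l'}(S^0,\QQ)$, contradicting Lemma \ref{open}.

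The only genuinely nontrivial ingredient is Lemma \ref{open} itself, whose proof rests on the semismall de Cataldo--Migliorini analysis of Lemma \ref{decompolemma}; the reduction above is elementary. The dimension $l-c+1$ is achieved on the nose because smearing by $\PP^{c-1}$ contributes $c-1$ to the dimension while boosting both the coniveau and half the cohomological degree by exactly $c-1$.
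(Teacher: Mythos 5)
Your proposal is correct and follows essentially the same route as the paper: it invokes Lemma \ref{open} to produce $(S^0,\gamma)$, forms $X=S^0\times\PP^{c-1}$ with $\alpha=p^*\gamma\smile q^*\lambda$, and rules out strong coniveau $\geq c$ via $p_*\alpha=\gamma$ and the projection formula, exactly as in the paper's proof of Theorem \ref{main4}. The only cosmetic difference is that you check coniveau $\geq c$ by a vanishing-on-the-complement (supports) argument rather than by viewing $\alpha$ as the Gysin pushforward of $\gamma$ along $S^0\times\{t\}\hookrightarrow X$, which are equivalent verifications.
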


\begin{proof}
By Lemma \ref{open}, we may find a smooth complex variety $S^0$ of dimension $l-2c+2$ and a class $\gamma\in H^{l-2c+2}(S^0,\QQ)$ which has coniveau $\geq 1$ and strong coniveau~$0$. Define
$X:= S^0\times \PP^{c-1}$ with projections $p:X\to S^0$ and $q:X\to \PP^{c-1}$, and consider the class $\alpha:=p^*\gamma\smile q^*\lambda$, where $\lambda\in H^{2c-2}(\PP^{c-1},\QQ)$ is the class of a point $t\in\PP^{c-1}$. Since $\alpha$ is the pushforward of $\gamma$, which has coniveau $\geq 1$, by the closed immersion $S^0\times\{t\}\to S^0\times \PP^{c-1}$, it has coniveau $\geq c$. Since $p_*\alpha=\gamma$ and $\gamma$ has strong coniveau $0$, we see that $\alpha$ has strong coniveau $<c$.
\end{proof}

\subsection{Singular varieties} 
\label{singularpar}

There is no strong coniveau filtration on the cohomology of a singular variety, as there do not exist pushforward morphisms associated with arbitrary proper morphisms of singular varieties. However, there exist variants of both the coniveau filtration and the strong coniveau filtration on the Borel--Moore homology of an arbitrary variety (in particular, on the homology of a proper variety).

Based on the examples of \S\S\ref{geometric}--\ref{con1examples}, we show that these variants do not allow to extend Deligne's Theorem \ref{Deligne} to arbitrary, not necessarily smooth, proper varieties.

\begin{theorem}
\label{singular}
For all $c\geq 1$ and $l\geq 3$
there exists a normal projective variety~$X$ of dimension~$l+c-1$ and a class $\zeta\in H_{l}(X,\QQ)$ with the following properties:
\begin{enumerate}[(i)]
\item One has $\zeta|_{X\setminus Z}=0$ in $H_{l}^{\BM}(X\setminus Z,\QQ)$ for some closed subset $Z\subset X$ of codimension $\geq c$ in $X$.
\item There do not exist a smooth proper variety $Y$ of dimension $\leq l-1$, a morphism $f:Y\to X$ and a class $\xi\in H_{l}(Y,\QQ)$ such that $f_*\xi=\zeta$.
\end{enumerate}
\end{theorem}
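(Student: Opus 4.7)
The plan is to repackage the class $\gamma\in H^l(S^0,\QQ)$ from the proof of Lemma~\ref{open}, which has coniveau $\geq 1$ and strong coniveau $0$, as a Borel--Moore homology class on the singular projective variety $\overline{S}$, and then to take products with $\PP^{c-1}$ to handle $c\geq 2$.

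For $c=1$, keep the notation of Lemmas~\ref{negativeconstruction} and~\ref{open}, and set $X:=\overline{S}$. Since $\overline{D}$ is a finite set and $l\geq 3$, the localisation long exact sequence in Borel--Moore homology collapses to an isomorphism $H_l(\overline{S},\QQ)\isoto H_l^{\BM}(S^0,\QQ)$, which together with Poincaré duality on the smooth variety $S^0$ of dimension $l$ identifies this group with $H^l(S^0,\QQ)$; let $\zeta\in H_l(X,\QQ)$ be the class corresponding to $\gamma$. Condition~(i) is essentially the coniveau of $\gamma$: if $Z'\subset S^0$ is a closed subset of codimension $\geq 1$ with $\gamma|_{S^0\setminus Z'}=0$, then $Z:=\overline{Z'}\cup\overline{D}\subset X$ has codimension $\geq 1$ and $\zeta|_{X\setminus Z}=0$ by Poincaré duality on the smooth locus $S^0\setminus Z'$. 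To check (ii), suppose for contradiction that $f:Y\to X$ is a morphism from a smooth proper variety $Y$ of dimension $d\leq l-1$ and $\xi\in H_l(Y,\QQ)$ satisfies $f_*\xi=\zeta$. Set $E:=f^{-1}(\overline{D})$, $Y^0:=Y\setminus E$, and $f^0:=f|_{Y^0}:Y^0\to S^0$, which is proper since properness is stable under base change. The compatibility of $f_*$ with the closed--open localisation sequences yields $f^0_*(\xi|_{Y^0})=\zeta|_{S^0}\in H_l^{\BM}(S^0,\QQ)$; transporting this equality through Poincaré duality on $Y^0$ and $S^0$ realises $\gamma$ as the Gysin pushforward of a class in $H^{2d-l}(Y^0,\QQ)$ under $f^0$, exhibiting $\gamma$ as having strong coniveau $\geq 1$ in $H^l(S^0,\QQ)$ and contradicting Lemma~\ref{open}.

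For $c\geq 2$, take $X:=\overline{S}\times\PP^{c-1}$ with first projection $p$, and set $\zeta:=\zeta_0\times\lambda\in H_l(X,\QQ)$, where $\zeta_0$ is the class from the $c=1$ construction and $\lambda\in H_0(\PP^{c-1},\QQ)$ is the class of a point. The support of $\zeta_0$ has codimension $\geq 1$ in $\overline{S}$, so $\zeta$ is supported on a codimension-$\geq c$ subset, giving (i). Any hypothetical witness $f:Y\to X$, $\xi\in H_l(Y,\QQ)$ to the failure of (ii) would push forward under $p$ to $(p\circ f)_*\xi=p_*(\zeta_0\times\lambda)=\zeta_0$ (the latter equality because $\lambda\in H_0$ is a point class), with $p\circ f:Y\to\overline{S}$ proper from a smooth source of dimension $\leq l-1$, contradicting the already established $c=1$ case. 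I expect the main technical point to be verifying the compatibility of $f_*$ with the Borel--Moore localisation sequences for the square $(Y^0\subset Y,\,S^0\subset\overline{S})$, so that the identity $f^0_*(\xi|_{Y^0})=\zeta|_{S^0}$ holds and then translates cleanly via Poincaré duality on the smooth strata into the Gysin-pushforward relation needed to invoke the strong coniveau zero conclusion of Lemma~\ref{open}.
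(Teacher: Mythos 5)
Your proposal is correct and follows essentially the same route as the paper: lift $\gamma^\vee$ to $H_l(\overline{S},\QQ)$ via the Borel--Moore sequence of the pair $(\overline{S},\overline{D})$, take the slice class on $\overline{S}\times\PP^{c-1}$, get (i) from the coniveau $\geq 1$ of $\gamma$, and get (ii) by restricting over $S^0$, using compatibility of proper pushforward with localisation and Poincar\'e duality on the smooth loci to contradict the strong coniveau $0$ statement of Lemma~\ref{open}. The only cosmetic difference is that you treat $c=1$ first and reduce $c\geq 2$ to it by pushing forward along the first projection, whereas the paper argues uniformly in $c$; both are fine.
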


\begin{proof}
We use the notation of Lemmas \ref{negativeconstruction} and \ref{open}.
 The exact sequence of the pair $(\overline{S},\overline{D})$ 
in Borel--Moore homology \cite[IX.2.1]{iversen}
reads:
\begin{equation}
\label{respushfwd}
\begin{aligned}
\xymatrix
@R=0.3cm
@C=0.3cm
{
\cdots\ar[r]&
H_{l}(\overline{S},\QQ)\ar[r]& H^{\BM}_{l}(S^0,\QQ)
\ar^{}[r]& H_{l-1}(\overline{D},\QQ)=0.
}
\end{aligned}
\end{equation}
Let $\gamma^{\vee}\in H^{\BM}_{l}(S^0,\QQ)$ be Poincar\'e dual to the class $\gamma\in H^{l}(S^0,\QQ)$, 
and choose a lift $\eta\in H_{l}(\overline{S},\QQ)$ of $\gamma^{\vee}$ in 
(\ref{respushfwd}).
We define $X:=\overline{S}\times \PP^{c-1}$. 
Let $\zeta\in H_{l}(X,\QQ)$ be the pushforward of $\eta$ by the natural morphism $\overline{S}\times\{t\}\to \overline{S}\times \PP^{c-1}$, where $t\in\PP^{c-1}$ is a point.

Since $\gamma$ has coniveau $\geq 1$ by Lemma \ref{open}, one has $\eta|_{\overline{S}\setminus F}=0$ in $H_l^{\BM}(\overline{S}\setminus F,\QQ)$ for some proper closed subset $F\subset \overline{S}$. Taking $Z:=F\times\{t\}$ proves assertion (i).

Assume for contradiction that there exist a variety $Y$, a morphism $f$ and a class $\xi$ as in (ii). Define $g:=pr_1\circ f:Y\to \overline{S}$, let $Y^0:=g^{-1}(S^0)$ and let $g^0:=g|_{Y^0}:Y^0\to S^0$. Define $\xi^{\vee}\in H^*(Y,\QQ)$ to be the class Poincar\'e dual to $\xi$. Then the class $g^0_*(\xi^{\vee}|_{Y^0})$ is Poincar\'e dual to $g^0_*(\xi|_{Y^0})=(g_*\xi)|_{S^{0}}=((pr_1)_*\zeta)|_{S^{0}}=\eta|_{S^0}=\gamma^{\vee}$, hence equal to~$\gamma$. This contradicts that $\gamma$ has coniveau $0$ by Lemma \ref{open}, and concludes the proof.
\end{proof}

\section{Further questions}
\label{questions}

\subsection{Three-dimensional examples} Does there exist a smooth projective complex threefold $X$ such that the inclusion $\widetilde N^1H^3(X,\ZZ)\subset N^1H^3(X,\ZZ)$ is strict? In view of Remark \ref{rkconiveaus}, this would require an obstruction to having high strong coniveau which is not topological.

\subsection{Rationally connected examples} 
For a smooth projective rationally connected variety, it follows from \cite[Proposition 3.3 (i)]{colliotvoisin} that $N^1H^{l}(X,\ZZ)=H^{l}(X,\ZZ)$ for any $l>0$. 
Are there examples of such varieties such that the inclusion $\widetilde N^1H^l(X,\ZZ)\subseteq H^l(X,\ZZ)$ is strict? What about the case $l=3$? Such a variety could not be rational by Corollary \ref{rational}. This question was suggested to us by Claire Voisin.

\subsection{Positive results for threefolds.} Does $\widetilde N^1H^3(X,\ZZ)=N^1H^3(X,\ZZ)$ hold for some particular classes of threefolds, beyond Example \ref{exAJ}?
Voisin \cite[Theorem~0.2]{voisin} has very recently proven this equality modulo torsion when $X$ is a rationally connected threefold. 
Desingularizations of nodal quartic threefolds and the Artin--Mumford threefold give natural test cases (some of these rationally connected threefolds are known to have torsion in $H^3(X,\ZZ)$).

\subsection{Further discrepancy between coniveau and strong coniveau} Can one find a smooth projective complex variety $X$ and a class $\alpha\in H^l(X,\ZZ)$ that has coniveau $\geq c$ but strong coniveau $\leq c-2$? What about $c=2$ and $l=5$?

\subsection{Specialization of strong coniveau}
Suppose $f: \X\to T$ is a smooth projective family over a smooth connected curve $T$. If $\a\in H^l(\X,\ZZ)$,
and if $\a_t\in H^l(\X_t,\ZZ)$ has strong coniveau $\geq c$ for all $t\neq 0$, does $\a_0\in H^l(\X_0,\ZZ)$ have strong coniveau~$\geq c$? 
If this question had a positive answer, one could hope to construct cohomology classes for which coniveau and strong coniveau differ by degeneration arguments.

\subsection{Finite coefficients}
For a prime number $p$, and integers $c\geq 1$ and $l\geq 2c+1$, does there exist a smooth projective complex variety $X$ such that the inclusion $\widetilde N^cH^l(X,\ZZ/p)\subset N^cH^l(X,\ZZ/p)$ is strict? What about $p=2$, $c=1$ and $l=3$?

\bibliographystyle{myamsalpha}
\bibliography{coniveau}
\end{document}